\theoremstyle{plain}
\newtheorem{theorem}{Theorem}[section]
\newtheorem{lemma}[theorem]{Lemma}
\newtheorem{prop}[theorem]{Proposition}
\newtheorem{cor}[theorem]{Corollary}
\theoremstyle{definition}
\newtheorem{definition}[theorem]{Definition}
\newtheorem{remark}[theorem]{Remark}
\newtheorem{example}[theorem]{Example}
\theoremstyle{remark}
\newtheorem{claim}[theorem]{Claim}
\newtheorem*{notation}{Notation}
\Crefname{prop}{Proposition}{Propositions}
\Crefname{cor}{Corollary}{Corollaries}
\renewcommand{\tilde}{\widetilde}
\renewcommand{\bar}{\overline}
\newcommand{\bbC}{\mathbb{C}}
\newcommand{\bbD}{\mathbb{D}}
\newcommand{\bbN}{\mathbb{N}}
\newcommand{\bbQ}{\mathbb{Q}}
\newcommand{\bbR}{\mathbb{R}}
\newcommand{\bbZ}{\mathbb{Z}}
\newcommand{\raw}{\rightarrow}
\newcommand{\Rra}[1]{\xrightarrow[R]{#1}}
\newcommand{\xra}{\xrightarrow}
\newcommand{\cug}{\subseteq}
\newcommand{\undx}{{\underline{x}}}
\newcommand{\undw}{{\underline{w}}}
\newcommand{\undH}{{\underline{\mathbf{H}}}}
\newcommand{\calA}{\mathcal{A}}
\newcommand{\calC}{\mathcal{C}}
\newcommand{\calD}{\mathcal{D}}
\newcommand{\calF}{\mathcal{F}}
\newcommand{\calH}{\mathcal{H}}
\newcommand{\calK}{\mathcal{K}}
\newcommand{\calS}{\mathcal{S}}
\newcommand{\frh}{\mathfrak{h}}
\newcommand{\frD}{\mathfrak{D}}
\newcommand{\frP}{\mathfrak{P}}
\newcommand{\frQ}{\mathfrak{Q}}
\newcommand{\frR}{\mathfrak{R}}
\newcommand{\sbim}{{\mathbb{S}Bim}}
\newcommand{\realign}{ \hspace*{\dimexpr-\leftmargini-\leftmarginii}}
\newcommand{\isom}{\xrightarrow{\sim}}
\newcommand{\bfH}{\mathbf{H}}
\newcommand{\bfP}{\mathbf{P}}
\newcommand{\bfQ}{\mathbf{Q}}
\newcommand{\bfR}{\mathbf{R}}
\newcommand{\bfS}{\mathbf{S}}
\newcommand{\bfU}{\mathbf{U}}
\newcommand{\bfV}{\mathbf{V}}
\newcommand{\bfZ}{\mathbf{Z}}
\newcommand{\cus}{{\subset\hspace*{-0.30cm}\raisebox{0.05cm}{\scalebox{0.5}{$\oplus$}}}\hspace{0.05cm}}
\newcommand{\cussmall}{{\subset\hspace*{-0.25cm}\raisebox{0.04cm}{\scalebox{0.5}{$\oplus$}}}\hspace{0.04cm}}
\newcommand{\form}{\langle
-,-\rangle}
\newcommand{\ctop}{c_{top}}
\newcommand{\Address}{
 \bigskip{\footnotesize

 \textsc{Albert-Ludwigs-Universit\"at Freiburg, Freiburg im Breisgau, Germany}\par\nopagebreak
 \textit{E-mail address}: \texttt{leonardo.patimo@math.uni-freiburg.de}
}}
\DeclareMathOperator{\cha}{ch}
\DeclareMathOperator{\End}{End}
\DeclareMathOperator{\grrk}{grrk}
\DeclareMathOperator{\hgt}{ht}
\DeclareMathOperator{\Hom}{Hom}
\DeclareMathOperator{\Iden}{Id}
\DeclareMathOperator{\spa}{span}
\DeclareMathOperator{\Sym}{Sym}
\DeclareMathOperator{\Trace}{Tr}
\DeclareMathOperator{\Peaks}{Peaks}
\DeclareMathOperator{\adm}{Adm}
\DeclareMathOperator{\Conf}{Conf}
\DeclareMathOperator{\rex}{rex}
\newcommand{\carcu}{\curvearrowright}
\newcommand{\carau}{\curvearrowleft}
\newcommand{\carcd}{\mathbin{\rotatebox[origin=c]{180}{$\curvearrowright$}}}
\newcommand{\carad}{\mathbin{\rotatebox[origin=c]{180}{$\curvearrowleft$}}}
\newcommand{\vertcong}{\mathbin{\rotatebox[origin=c]{-90}{$\cong$}}}
\newdimen\boxmini\setlength{\boxmini}{0.4cm}
\newdimen\boxlength\setlength{\boxlength}{0.6cm}
\newcommand\tabpath[1]
\def\st{+1}
\def\st{+1}
\def\st{-1}
\newcommand\tabpathshift[3]
		\def\st{+1}
		\def\st{+1}
		\def\st{-1}
\newcommand\tabpathc[2]
\def\st{+1}
\def\st{+1}
\def\st{-1}
\newcommand{\dpath}{\searrow}
\newcommand{\upath}{\nearrow}
\newcommand{\barF}{\bar{F}}
\newcommand{\duarrow}{\mathrlap{\nwarrow}\nearrow}
\newcommand{\ddarrow}{\mathrlap{\swarrow}\searrow}
\DeclareMathOperator{\Gr}{Gr}
\title{Bases of the Intersection Cohomology of Grassmannian Schubert Varieties}
\author{Leonardo Patimo}
\begin{document}

\maketitle

\begin{abstract}
	The parabolic Kazhdan--Lusztig polynomials for Grassmannians can be computed by counting Dyck partitions. We ``lift'' this combinatorial formula to the corresponding category of singular Soergel bimodules to obtain bases of the Hom spaces between indecomposable objects. In particular, we
	 describe bases of intersection cohomology of Schubert varieties in Grassmannians parametrized by Dyck partitions which extend (after dualizing) the classical Schubert basis of the ordinary cohomology.
\end{abstract}

\tableofcontents 

\section*{Introduction}
In some sense the cohomology of the Grassmannians has been studied since the late 19th century. The original motivation was given by Schubert calculus: it turns out that many basic questions in enumerative geometry (for example: ``How many lines intersect four given lines in a $3$ dimensional space?'') can be approached via computations in the cohomology ring.

We denote by $\Gr(i,n)$ the Grassmannians of vector spaces of dimension $i$ inside $\bbC^n$. 
The Schubert cells $\calC_\lambda\subset \Gr(i,n)$ are parameterized by piece-wise linear paths $\lambda$ in $\bbR^2$ from $(0,i)$ to $(n,n-i)$ which can be obtained by joining segments of length $\sqrt{2}$ and direction $\dpath$ or $\upath$. The closure of the Schubert cells $X_\lambda:=\bar{\calC_\lambda}$ are called Schubert varieties.
The Schubert cells form a cell decomposition of $\Gr(i,n)$ and, by taking characteristic classes of Schubert varieties, we obtain a distinguished basis of the cohomology ring $H^\bullet(\Gr(i,n),\bbQ)$, called the Schubert basis. 
With the help of classical results such as Pieri's formula (which describes how to multiply the class of a Schubert variety with the Chern class of a line bundle) and the more general Littlewood--Richardson rule (which describes how to multiply two arbitrary Schubert classes) the Schubert basis allows us to fully understand the ring structure of $H^\bullet(\Gr(i,n),\bbQ)$. The same applies if we consider the cohomology ring of a Schubert variety $H^\bullet(X_\lambda,\bbQ)$: it can be obtained as a quotient of $H^\bullet(\Gr(i,n),\bbQ)$ and the elements smaller or equal than $\lambda$ in the Schubert basis descend to a distinguished basis of $H^\bullet(X_\lambda,\bbQ)$.

A related, yet less well-understood, object is the intersection cohomology of a Schubert variety
 $IH^\bullet(X_\lambda,\bbQ)$ which encodes a lot of data relevant from the point of view of a representation theorist. Since Schubert varieties are in general singular, their ordinary singular cohomology embeds as a submodule in the intersection cohomology. It is then natural to ask whether it is possible to extend the Schubert basis in a natural way to $IH^\bullet(X_\lambda,\bbQ)$, that is if it is possible find a distinguished basis for the intersection cohomology of a Schubert variety $X_\lambda$.
The study of bases of the intersection cohomology of Schubert varieties of a Grassmannian is the main subject of the present paper.

Our approach to this problem is mostly algebraic,
using the technology of singular Soergel bimodules as in \cite[Definition 7.1.1]{W4}. We call Grassmannian Soergel bimodules the singular Soergel bimodules corresponding to the maximal parabolic subgroup $S_i\times S_{n-i}$ of the symmetric group $S_n$. 
Then, we work with the modules $IH^\bullet(X_\lambda,\bbQ)$ by reinterpreting them as indecomposable Grassmannian Soergel bimodules. 
 Finding basis of the intersection cohomology is equivalent to finding bases of the Hom spaces between Grassmannian Soergel bimodules. This reinterpretation is very convenient as it allows us to have the rich technology of Soergel bimodules at disposal. For example, morphisms between singular Soergel bimodules can be depicted using diagrams and the dimension of the spaces of morphisms can be computed in the corresponding Hecke algebra.
The present paper can be regarded as a first step towards establishing bases in singular Soergel calculus. We hope that some of our constructions, e.g. the partial order on the Dyck partitions introduced in \Cref{PartialOrder}, can set the path to build similar bases between the Hom spaces in the singular world with respect to an arbitrary parabolic subgroup, at least in type $A$.

Dimensions of indecomposable singular Soergel bimodules are given by the parabolic Kazhdan--Lusztig polynomials which, as usual for these type of polynomials, can be computed via a recursive formula. However, Schubert varieties in Grassmannians are very special among Schubert varieties in that they all admit small resolution of singularities \cite[Theorem 1]{Zel}. At the level of Kazhdan--Lusztig polynomials this is reflected in the existence of combinatorial non-recursive formulas.

The first version of such a formula is due to Lascoux and Sch\"utzenberger, who described a combinatorics for these polynomials involving ``binary trees'' \cite[Th\'eor\`eme 7.8]{LS}. More recently, Shigechi and Zinn-Justin gave an equivalent formulation of this combinatorics, this time involving Dyck partitions \cite[Corollary 2]{SZJ}. A major advantage of using Dyck partitions is that in this setting it is also possible to describe formulas for the inverse Kazhdan--Lusztig polynomials (this was originally shown by Brenti in \cite[Theorem 5.1]{Br}).

We recall what a Dyck partition is. If $\lambda$ and $\mu$ are paths with $\lambda\leq \mu$ (i.e. $\lambda$ lies completely below $\mu$), a Dyck partition between $\lambda$ and $\mu$ is a partition of the region between $\lambda$ and $\mu$ into Dyck strips (or thickened Dyck paths, see \Cref{Dyck}) as in \Cref{figDyck}. 
\begin{figure}[ht]
	\begin{center}
		\begin{tabular}{c c c}
			\begin{tikzpicture}[x=\boxmini,y=\boxmini]
			\tikzset{vertex/.style={}}
			\tikzset{edge/.style={very thick}}
			\tabpath{+,-,+,-,-,-,+,+,-,+,+}
			\tabpath{-,-,+,+,-,-,+,+,-,+,-}
			\tabpathc{+,-,+,+,-,-,+,+,-,+,-,-}{blue}
			\tabpathc{-,-,+,-,-,-,+,+,+,-,+,+}{red}
			\node[blue] at (12,2) {$\mu$};
			\node[red] at (12,-2) {$\lambda$};
			\end{tikzpicture}
		\end{tabular}
	\end{center}
\caption{An example of a Dyck partition containing $5$ Dyck strips between the paths $\lambda$ at the bottom and $\mu$ at the top, respectively depicted in red and blue.}\label{figDyck}
\end{figure}

We can recover the Kazhdan--Lusztig polynomial $h_{\lambda,\mu}(v)$ by counting the number of Dyck partitions satisfying a combinatorial condition (see \Cref{TypeIandII}). More precisely, the coefficient of $v^k$ in $h_{\lambda,\mu}(v)$ is equal to the number of such Dyck partitions with $k$ strips.

The main goal of the present paper is to ``lift'' the combinatorics of Dyck partitions to the category of Grassmannian Soergel bimodules, obtaining a basis of the intersection cohomology parameterized by Dyck partitions.

%Furthermore, the inverse Kazhdan--Lusztig polynomials also naturally occur in the setting of singular Soergel bimodules: they can be extrapolated from the direct summands occurring in singular Rouquier complexes.

The basic idea is to reinterpret each Dyck strip $D$ as a morphism of degree one (denoted by $f_D$) between the corresponding singular Soergel bimodules. A key feature of indecomposable Grassmannian Soergel bimodules is that the space of degree one morphisms between indecomposable bimodules is one-dimensional, therefore the morphism $f_D$ is actually uniquely determined up to a scalar. (In \Cref{diagramsec} we will fix a precise choice for the morphisms $f_D$.)

For an arbitrary Dyck partition $\bfP=\{D_1,D_2,\ldots,D_k\}$ we can consider the morphism $f_\bfP=f_{D_1}\circ f_{D_2}\circ \ldots \circ f_{D_k}$. Unfortunately, this morphism is not well defined: different orders of the elements in $\bfP$ (i.e. different orders for the composition of the morphisms $f_{D_i}$) may lead to different morphisms.

A crucial technical point is to define a partial order $\succ$ on the set of Dyck partitions. We then show that the morphism $f_\bfP$ is well defined, up to a scalar and up to smaller morphisms in the partial order $\succ$. As a corollary we conclude that, after fixing for any Dyck partition an order of its strips arbitrarily (with the sole condition that the order must be admissible, see \Cref{admissible}), the set $\{f_\bfP\}$ gives us a basis of the morphisms between singular Soergel bimodules. Moreover, the basis we obtain is cellular and this makes the Grassmannian Soergel bimodules a strictly object-adapted cellular category in the sense of \cite[Definition 2.4]{ELa}.
By evaluating these morphisms on the unit of the cohomology ring, we also get bases of the (equivariant) intersection cohomology of Schubert varieties.

As it turns out, the basis we obtain is highly not canonical and there does not seem to exist any canonical or distinguished way of choosing the orders of the strips in the Dyck partitions! 
Only in very simple cases, e.g. when the corresponding tableau has only two 
rows, we do get a canonical answer.

In \Cref{DegreeTwoMorphisms,RQexplicit}, we build on the theory of singular Rouquier complexes and we assume some familiarity with the results in  \cite{Pat4} where singular Rouquier complexes are studied in some detail.
Unfortunately, in this paper we have to limit ourselves  to deal with rational coefficients since singular Rouquier complexes are not yet developed over the integers, not even in the Grassmannian case. We believe, however, that most of our methods can be directly extended and that the bases we describe are also bases of the intersection cohomology with integral coefficients.

We strongly recommend to read this paper in color.

%The paper is structured as follows.
%In the first part of this paper (\S \ref{chap1}) we recall the definition of singular Soergel bimodules, both from an algebraic and a diagrammatic point of view.
%Our main results, i.e., the study of Grassmannian Soergel bimodules and the results discussed above, are contained the second part (\S \ref{Grass}). 
% In the \Cref{apx}, we collect some preparatory results regarding generalized Bott--Samelson bimodules, extending them from the ordinary Bott--Samelson bimodule case \cite{EW1}. More precisely, we extend the definition of intersection forms and  relate these forms to the diagrammatic description of the category: for example, we show that the adjoint of a morphism can be obtained by flipping the corresponding diagram. 

\subsection*{Acknowledgments}

This material is based upon work supported by the National Science Foundation under Grant No. DMS-1440140 while the author was in residence at the Mathematical Sciences Research Institute in Berkeley, California, during the Spring 2018 semester.
I wish to thank Ben Elias and Geordie Williamson for useful discussions.
I am also very grateful to the referee for a careful reading and many
suggestions which have significantly improved this paper.

\section{Singular Soergel bimodules}\label{chap1}

We first recall some notations about Hecke algebras from \cite[\S 3.3]{EW1} and \cite[\S 2]{W4}.

We fix $n$ and we assume that $W$ is the symmetric group $S_n$. For $1\leq k \leq n-1$, we denote by $s_k\in W$ the simple transposition $(k\;\; k+1)$. 
We denote by $S=\{s_1,\ldots, s_{n-1}\}$ denote the set of simple transpositions, so that 
 $(W,S)$ is a Coxeter system. We denote by $\leq$ the  Bruhat order and by $\ell$ the length function of $W$.

 Let $\calH$ be the Hecke algebra of $(W,S)$. We denote by $\{\bfH_{x}\}_{x \in W}$ the standard basis and by $\{\undH_x\}_{x\in W}$ the Kazhdan--Lusztig basis. Both are bases of $\calH$ as a $\bbZ[v,v^{-1}]$-module. 

If $I\subset S$, we denote by $W_I$ the subgroup of $W$ generated by $I$. We denote by $w_I$ its longest element and by $\ell(I)$ the length of $w_I$. Let $\undH_I:=\undH_{w_I}$. We denote by $W^I$ the set of minimal right coset representatives in $W/W_I$, i.e. $W^I=\{x \in W \mid xs>x$ for all $s\in I\}$.

Let $\calH^I:=\calH \undH_I$. This is a left ideal of $\calH$. For $x\in W^I$ we define $\bfH_x^I=\bfH_x\undH_I$ and $\undH_x^I=\undH_{xw_I}$. Both $\{\bfH^I_{x}\}_{x \in W^I}$ and $\{\undH^I_{x}\}_{x \in W^I}$ are bases of $\calH^I$ as a $\bbZ[v,v^{-1}]$-module, called respectively the \emph{$I$-standard} and the \emph{$I$-parabolic Kazhdan--Lusztig basis}. The \emph{$I$-parabolic Kazhdan--Lusztig polynomials} $h_{x,y}^I(v)\in v\bbN[v]$ are the coefficients of the change-of-basis matrix between these two bases, namely
\[ \undH_x^I=\bfH_x^I+\sum_{W^I\ni y<x} h_{x,y}^I(v) \bfH_y^I.\]
The \emph{inverse $I$-parabolic Kazhdan--Lusztig polynomials} $g_{x,y}^I\in v\bbN[v]$ are the coefficients of the inverse matrix (up to a sign renormalization), i.e. we have
\[ \sum_{y\in W^I}(-1)^{\ell(y)-\ell(x)} g_{x,y}^I(v) h_{y,z}^I(v) = \delta_{x,z}.\]

%In \Cref{Grass} we describe some morphisms between singular Soergel bimodules using the diagrammatics of \cite{ESW}. For this reason, even if our focus is mainly on \emph{one-sided} singular Soergel bimodules, we need to recall some notation and a few facts about \emph{two-sided} singular Soergel bimodules.

Let $\frh^*$ be the $(n-1)$-dimensional geometric representation of $S_n$ over $\bbQ$ (cf. \cite[\S 5.3]{Hum}), that is, $\frh^*$ is the $\bbQ$-vector space
\[\frh^*:=\left\{(x_i)_{i=1}^n\in \bbQ^n \mid \sum_{i=1}^n x_i=0\right\}\]
on which $S_n$ acts by permuting the coordinates.
%\begin{remark}
%	If $W$ is a crystallographic Coxeter group there are other very natural choices for $\frh^*$ coming from the theory of Kac--Moody groups (see \cite[Proposition 1.1]{Ric}) for which the results in \cite{EW1} (and in this section) remain valid.
%	
%	Moreover, if $\frh^*$ is defined over a subfield $\bbK\subset \bbR$, we can work over $\bbK$ instead of $\bbR$.
%\end{remark}
Let $R=\Sym^\bullet(\frh^*)$ be the symmetric algebra of $\frh^*$. We regard it as a graded algebra with $\deg (\frh^*)=2$. We define $R^I$ to be the subalgebra of $W_I$-invariants in $R$. We denote by $(1)$ the grading shift. 
For $1\leq j\leq n-1$, the Demazure operator $\partial_j:R\raw R(-2)$ is defined as \[\partial_j(f)=\frac{f-s_j(f)}{x_{j}-x_{j+1}}.\]
%Note that if $f$ is of degree $2$ then $\partial_j(f)=\alpha_j^\vee(f)$. 
The Demazure operators satisfy the braid relations, hence for any $x\in W$ with reduced expression $\undx=s_{i_1}s_{i_2}\ldots s_{i_k}$ we can define $\partial_x:=\partial_{i_1}\partial_{i_2}\ldots \partial_{i_k}:R\raw R(-2k)$. 

Let $I\cug J$ be subsets of $S$. The ring $R^I$ is a Frobenius extension of $R^J$ with Frobenius trace $\partial_{w_Iw_J}:R^I\raw R^J$. In other words, this means that $R^I$ is free and finitely generated as an $R^J$-module and the morphism $\partial_{w_Iw_J}:R^I\raw R^J$ is non-degenerate, i.e. 
there exist dual bases $\{x_i\}$ and $\{y_i\}$ of $R^I$ over $R^J$ such that $\partial_{w_Iw_J}(x_iy_j)=\delta_{ij}$.
The comultiplication $R^I\raw R^I\otimes_{R^J}R^I$ is the map of $R^I$-bimodules which sends $1$ to $\Delta_J^I:=\sum x_i\otimes y_i$. The element $\Delta_J^I$ does not depend on the choice of the dual bases. 

\begin{definition}
	Let $\vec{I}=(I_0,\ldots,I_{k})$ and $\vec{J}=(J_1,\ldots, J_k)$ be sequences of subsets of $S$. We say that $(\vec{I},\vec{J})$ is a \emph{translation pair} if for any $h$ we have $I_{h-1}\subseteq J_h\supseteq I_h$. 
	
	If $(\vec{I},\vec{J})$ is a translation pair let \[\ell(\vec{I},\vec{J}):=\displaystyle \sum_{i=1}^k \ell(J_i)-\ell(I_i).\]
	The \emph{generalized Bott--Samelson bimodule} $BS(\vec{I},\vec{J})$ is the graded $(R^{I_0},R^{I_k})$-bimodule 
	\[BS(\vec{I},\vec{J})=R^{I_0}\otimes_{R^{J_1}}R^{I_1}\otimes_{R^{J_2}}\ldots \otimes_{R^{J_k}}R^{I_k}(\ell(\vec{I},\vec{J})).\]
	(If $k=0$ let $BS((I),\emptyset)$ be the graded $R^{I}$-bimodule $R^I$.)
	
\end{definition}

\begin{definition}
	Let $J,I$ be subsets of $S$. We define the category ${}^J\sbim^I$ of $(J,I)$-singular Soergel bimodules as the full subcategory of $(R^J,R^I)$-bimodules whose objects are direct sums of direct summands of shifts of generalized Bott--Samelson bimodules of the form $BS(\vec{I},\vec{J})$, where $(\vec{I},\vec{J})$ is a translation pair with $I_0=J$ and $I_k=I$.

	If $J=\emptyset$ we call ${}^\emptyset\sbim^I$ the category of $I$-singular Soergel bimodules, and we denote it by $\sbim^I$.
\end{definition}

There is a duality functor $\bbD$ on ${}^J\sbim^I$ defined by $B\mapsto \Hom^\bullet_{R^J-}(B,R^J)$, where $\Hom^\bullet_{R^J-}(-,-)$ denotes the space of morphisms of left $R^J$-modules.\footnote{The choice of the shift in the duality functor is consistent with \cite[\S 3]{Pat4}, but not with \cite[\S 6.3]{W4} (see also \cite[Remark 3.4]{Pat4}).} 
The indecomposable self-dual bimodules in ${}^J\sbim^I$ are in bijection with the double cosets in $W_J\backslash W/W_I$, and we denote by ${}^JB_x^I$ the indecomposable self-dual bimodule corresponding to $x\in W_J\backslash W/W_I$.

There exists an isomorphism of $\bbZ[v,v^{-1}]$-modules
\[\cha: [{}^J\sbim^I] \raw {}^J\calH^I:= \undH_J\calH \cap \calH\undH_I.\]
Moreover, for $B\in {}^K\sbim^J$ and $B'\in {}^J\sbim^I$ we have
\[\cha(B\otimes_{R^J}B')=\cha(B)*_J\cha(B')\]
where $*_J$ is the multiplication in $\calH$ rescaled by the Poincar\'e polynomial of $J$ (cf. \cite[\S 2.3]{W4}).

If $x\in W_J\backslash W/ W_I$ is a double coset, we denote by $x_+$ (resp. $x_-$) the longest (resp. shortest) element in $x$. Let ${}^J\undH^I_x:= \undH_{x_+}$. 
The following theorem is Soergel's conjecture for singular Soergel bimodules. %It states that the indecomposable Soergel bimodules categorify the Kazhdan--Lusztig basis.

\begin{theorem}[{\cite[Conjecture 7.5.2]{W4} and  \cite[Theorem 1.1]{EW1}}]\label{sconj}
	For any $x\in W_J\backslash W/W_I$ we have	$\cha({}^JB_x^I)={}^J\undH^I_x=\undH_{x_+}$. 
\end{theorem}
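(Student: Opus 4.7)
The plan is to reduce the statement to the ordinary Soergel conjecture of Elias--Williamson \cite{EW1} (the case $J=I=\emptyset$) via the restriction functors between $\sbim^\emptyset$ and ${}^J\sbim^I$. Fix a reduced expression $\undx=s_1\cdots s_\ell$ for $x_+$. By \cite{EW1} the ordinary Bott--Samelson bimodule $BS(\undx)\in\sbim^\emptyset$ decomposes as $B_{x_+}\oplus\bigoplus_{y<x_+}B_y^{\oplus m_y}$ with $\cha(B_{x_+})=\undH_{x_+}$ and $m_y\in\bbN[v,v^{-1}]$, while $\cha(BS(\undx))=\undH_{s_1}\undH_{s_2}\cdots\undH_{s_\ell}$. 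Viewing $BS(\undx)$ as an $(R^J,R^I)$-bimodule by restricting the left and right actions (equivalently, forming the generalized Bott--Samelson bimodule associated to the translation pair with $I_0=J$, $I_k=I$ and intermediate subsets $\{s_1\},\ldots,\{s_\ell\}$) produces an object $B\in{}^J\sbim^I$ whose decomposition is inherited from that of $BS(\undx)$.

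Next I would compute $\cha(B)$ in ${}^J\calH^I$. Under $\cha$, restriction of scalars along $R^J\hookrightarrow R$ and $R^I\hookrightarrow R$ corresponds to left multiplication by $\undH_J$ and right multiplication by $\undH_I$, so $\cha(B)=\undH_J\cdot\undH_{s_1}\cdots\undH_{s_\ell}\cdot\undH_I$. The Hecke identity at the heart of the argument is
\[\undH_J\,\undH_{x_+}\,\undH_I \;=\; \pi_J(v)\,\pi_I(v)\,\undH_{x_+},\]
with $\pi_K(v)=\sum_{w\in W_K}v^{\ell(w_K)-2\ell(w)}$. This follows from the fact that $x_+$ is the longest element of the double coset $W_JxW_I$: one has $sx_+<x_+$ for all $s\in J$ and $x_+s<x_+$ for all $s\in I$, and expanding $\undH_J=\sum_{w\in W_J}v^{\ell(w_J)-\ell(w)}\bfH_w$ together with the inductively-proven identity $\bfH_w\undH_{x_+}=v^{-\ell(w)}\undH_{x_+}$ for $w\in W_J$ collapses the left multiplication to $\pi_J(v)\undH_{x_+}$; the right factor is handled symmetrically. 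Substituting the Elias--Williamson decomposition for $\cha(BS(\undx))$ then shows that the coefficient of $\undH_{x_+}={}^J\undH^I_x$ in $\cha(B)$ equals $\pi_J(v)\pi_I(v)$, with all other contributions coming from $\undH_J\undH_y\undH_I$ for $y<x_+$.

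Combining this character computation with the classification of indecomposable self-dual objects of ${}^J\sbim^I$ by double cosets in $W_J\backslash W/W_I$ (and with Soergel's Hom formula for singular bimodules), $B$ must contain a unique indecomposable self-dual summand whose character is $\undH_{x_+}$, and this summand is by definition ${}^JB_x^I$. The main obstacle is the positivity step: one needs a priori that every indecomposable character in ${}^J\sbim^I$ expands positively in the ${}^J\undH^I$-basis, in order to rule out unexpected cancellations between the $\undH_{x_+}$-contribution of $B$ and the lower-order terms coming from the $B_y$ with $y<x_+$. In the ordinary case this positivity is the content of the Hodge-theoretic arguments of \cite{EW1} (Soergel conjecture, hard Lefschetz, and Hodge--Riemann relations on Soergel bimodules); in the singular setting, \cite{W4} either lifts this Hodge package directly to ${}^J\sbim^I$ or transports the ordinary-case positivity via biadjoint pairs linking $\sbim^\emptyset$ and ${}^J\sbim^I$.
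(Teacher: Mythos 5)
The paper does not prove Theorem~\ref{sconj}; it is simply cited from \cite{EW1} (for the ordinary Soergel conjecture) and \cite{W4} (for the reduction of the singular case to the ordinary one). So there is no in-paper argument to compare against, only the argument in the references, which your sketch is trying to reconstruct. The broad strategy you describe — embed ${}^JB_x^I$ as a summand of a suitable generalized Bott--Samelson bimodule, compute its character, and invoke positivity coming from \cite{EW1} — is the right kind of strategy, and the Hecke identity $\undH_J\undH_{x_+}\undH_I=\pi_J(v)\pi_I(v)\undH_{x_+}$ is correct. But several steps as written do not hold up.

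First, the translation pair you name is not a translation pair: with $I_0=J$, $J_1=\{s_1\}$ the condition $I_0\subseteq J_1$ fails unless $J\subseteq\{s_1\}$. The object you want does exist (take $\vec{I}=(J,\emptyset,\ldots,\emptyset,I)$ and $\vec{J}=(J,\{s_1\},\ldots,\{s_\ell\},I)$, which yields the restriction of $BS(\undx)$ up to a shift by $\ell(J)$), but the description has to be fixed before the character computation can be trusted. Second, and more importantly, the claim that the decomposition of $B$ in ${}^J\sbim^I$ is ``inherited'' from the decomposition $BS(\undx)\cong B_{x_+}\oplus\bigoplus_y B_y^{\oplus m_y}$ in $\sbim^\emptyset$ is false: the restriction of an indecomposable $(R,R)$-bimodule $B_y$ to a $(R^J,R^I)$-bimodule is in general very decomposable (already $B_s$ restricted to $(R^s,R^s)$-bimodules splits into four shifted copies of $R^s$ in type $A_1$ with $J=I=\{s\}$), so $B$ does not simply ``contain a unique summand ${}^JB_x^I$'' flagged by the $B_{x_+}$-piece. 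Third, the positivity you invoke is circular: what you say you need — that every $\cha({}^JB_y^I)$ expands positively in the ${}^J\undH^I$-basis with leading term $\undH_{y_+}$ — is precisely the statement being proved, and it does not follow directly from the positivity of the ordinary KL basis. The actual reduction in \cite{W4} runs through standard (nabla/Delta) filtrations, graded multiplicities and a careful analysis of $\grrk\End({}^JB_x^I)$, together with Proposition~\ref{piusing} to relate one-sided indecomposables to two-sided ones (and one has to track the shift $\ell(x_-)-\ell(w)$, so $\cha$ does not naively commute with $R\otimes_{R^J}-$). Without that machinery, the step from ``$\cha(B)=\pi_J\pi_I\undH_{x_+}+\text{lower}$'' to ``$\cha({}^JB_x^I)=\undH_{x_+}$'' has a genuine gap.
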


The following result explains what happens when we induce an indecomposable singular Soergel bimodule to a one-sided singular bimodule.

\begin{prop}[{\cite[Prop. 7.4.3]{W4}}]\label{piusing}
	Let $x \in W_J\backslash W/W_I$ and let $w\in W^I$ be such that the coset $wW_I$ is the unique maximal right $W_I$-coset contained in $x$. Then
	\[R\otimes_{R^J}{}^JB_x^I\cong B_w^I(\ell(x_-)-\ell(w)).\]
\end{prop}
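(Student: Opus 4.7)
The plan is to identify $R\otimes_{R^J}{}^JB_x^I$ via its character, using Soergel's conjecture (Theorem \ref{sconj}). The first step is to verify that $R\otimes_{R^J}{}^JB_x^I$ lies in $\sbim^I$: writing ${}^JB_x^I$ as a summand of a generalized Bott--Samelson bimodule $BS(\vec{I},\vec{J})$ with $I_0=J$ and $I_k=I$, applying $R\otimes_{R^J}-$ produces $BS(\vec{I}',\vec{J})$ with $\vec{I}'=(\emptyset,I_1,\ldots,I_k)$, which is again a generalized Bott--Samelson bimodule. Theorem \ref{sconj} then guarantees that $R\otimes_{R^J}{}^JB_x^I$ is isomorphic to a direct sum of shifts of the indecomposables $B_y^I$, with multiplicities read off from its character in $\calH^I$.

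Next I would reexpress $\cha({}^JB_x^I)=\undH_{x_+}$ in the $I$-parabolic Kazhdan--Lusztig basis of $\calH^I$. The assumption that $wW_I$ is the unique maximal right $W_I$-coset contained in $x$ is equivalent to $x_+=w\cdot w_I$ with $w\in W^I$ and $\ell(x_+)=\ell(w)+\ell(w_I)$; this immediately forces $\undH_{x_+}=\undH_{ww_I}=\undH_w^I$. Were uniqueness to fail, $\undH_{x_+}$ would appear as a nontrivial sum of $\undH_y^I$'s and the induced bimodule would split into several indecomposables, so this hypothesis is essential.

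Finally I would track the grading shift between the two characters. The character maps on ${}^J\sbim^I$ and $\sbim^I$ are normalized via different duality functors, and the shift $\ell(\vec{I},\vec{J})$ in the definition of $BS(\vec{I},\vec{J})$ depends on $I_0$; combining these with a standard length identity of the form $\ell(x_+)=\ell(x_-)+\ell(w_J)+\ell(w_I)-\ell(w_{W_I\cap x_-^{-1}W_Jx_-})$ should produce a monomial correction of the form $v^{\ell(w)-\ell(x_-)}$. The upshot is
\[\cha(R\otimes_{R^J}{}^JB_x^I)=v^{\ell(w)-\ell(x_-)}\undH_w^I=\cha(B_w^I(\ell(x_-)-\ell(w))),\]
and a second application of Theorem \ref{sconj} yields the stated isomorphism.

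The main obstacle will be the shift bookkeeping in the last step: reconciling the different normalizations of $\bbD$ on ${}^J\sbim^I$ and $\sbim^I$, the $\ell(\vec{I},\vec{J})$-shift built into generalized Bott--Samelsons, and the double-coset length identities, so that everything collapses to the clean shift $\ell(x_-)-\ell(w)$. Once this is done, indecomposability of the induced bimodule follows automatically from the monomial form of its character in the $I$-parabolic Kazhdan--Lusztig basis.
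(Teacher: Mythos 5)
The paper does not prove this proposition; it is quoted from \cite[Prop.~7.4.3]{W4}, so there is no internal proof to compare against. Your preliminary steps are sound: $R\otimes_{R^J}(-)$ carries $BS(\vec I,\vec J)$ with $I_0=J$ to $BS(\vec I',\vec J)$ with $\vec I'=(\emptyset,I_1,\dots,I_k)$, which shows the induced bimodule lies in $\sbim^I$; and since every $s\in I$ is a right descent of $x_+$, one has $x_+=ww_I$ with $w\in W^I$ and $\ell(x_+)=\ell(w)+\ell(w_I)$, so indeed $\undH_{x_+}=\undH_w^I$.

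The gap is exactly at the step you flag as ``the main obstacle,'' and it is not mere bookkeeping: the argument as written is circular. You assert that $\cha(R\otimes_{R^J}{}^JB_x^I)$ equals the monomial $v^{\ell(w)-\ell(x_-)}\undH_w^I$ and that indecomposability ``follows automatically from the monomial form''; but a priori the character of the induced bimodule is some element of $\calH\undH_I$ that could be a nontrivial combination of several $\undH_y^I$, and showing it is a single monomial multiple of one KL element \emph{is} the indecomposability statement. Moreover, $R\otimes_{R^J}(-)$ does not act on characters by any fixed normalization: the exponent $\ell(w)-\ell(x_-)=\ell(w_J)-\ell\bigl(w_{W_J\cap x_-W_Ix_-^{-1}}\bigr)$ depends on $x$ through the redundancy of the double coset, so it cannot be absorbed into the definitions of the two character maps or duality conventions. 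What is actually needed is either a direct analysis of how the $\Delta$-filtration transforms under induction (each two-sided standard decomposes into one-sided standards with $x$-dependent shifts, and one must then match the resulting sum of standard basis elements with a parabolic KL element), or an independent proof of indecomposability; the cleanest route to the latter is adjunction: as a $(R^J,R^I)$-bimodule $R\otimes_{R^J}B\cong\bigoplus_u B(-\deg b_u)$ for a graded $R^J$-basis $\{b_u\}$ of $R$, so $\End^0(R\otimes_{R^J}B)\cong\End^0(B)$ is one-dimensional. Two smaller points: the uniqueness of the maximal right $W_I$-coset is automatic, not an essential hypothesis --- the maximum is always $x_+W_I$, because $u\mapsto uw_I$ is an order isomorphism of $W^I$ onto $W^Iw_I$ --- so the splitting you describe ``were uniqueness to fail'' cannot occur; and $\ell(\vec I,\vec J)=\sum_{i=1}^k\ell(J_i)-\ell(I_i)$ does not depend on $I_0$, contrary to your claim, which is precisely why the shift is unchanged when passing from $BS(\vec I,\vec J)$ to $BS(\vec I',\vec J)$.
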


When $w$ is as in \Cref{piusing} then $ww_I$ is the maximal element in the coset $W_JwW_I$, hence 
\begin{equation}\label{maxindouble}\undH_w^I=\undH_{ww_I}= {}^J\undH_x^I\in {}^J\calH^I.\end{equation}
Going in the opposite direction, we can easily decide if an indecomposable singular Soergel bimodule can be realized as the induction of another Soergel bimodule. For $w\in W^I$ we define $S_w=\{s \in S \mid sw\leq w$ in $W/W_I\}$. (Notice that the subgroup $W_{S_w}$ is  the stabilizer of the Schubert variety corresponding to $w$.) Then, for any $J\cug S_w$, we have 
\begin{equation}\label{stabil}
B_w^I=R\otimes_{R^{J}}{}^{J}B_w^I.
\end{equation}

If $M$ is a graded free left $R^J$-module, we denote its graded rank by $\grrk(M)$ and regard it as an element of $\bbZ[v,v^{-1}]$.
\begin{theorem}[Soergel's Hom Formula for Singular Soergel Bimodules {\cite[Theorem 7.4.1]{W4}}]\label{SHF}
	Let $B_1,B_2\in {}^J\sbim^I$. Then $\Hom^\bullet(B_1,B_2)$ is a free graded left $R^J$-module and we have
	\[\grrk(\Hom^\bullet(B_1,B_2)) = \langle \bar{\cha(B_1)},\cha(B_2)\rangle,\]
	where $\langle-,-\rangle$ is the pairing in the Hecke algebra defined in \cite[\S 2.3]{W4}.
\end{theorem}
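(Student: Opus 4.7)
The plan is to reduce to generalized Bott-Samelson bimodules and then compute iteratively using the Frobenius structure of the tower of rings $R^I \supseteq R^J$ for $I \cug J$ finitary. First, I would verify that both sides of the claimed identity are additive in $B_1$ and $B_2$ and compatible with shifts and direct summands. For the right-hand side this is immediate from bilinearity of the Hecke pairing and from the fact that $\cha$ descends to a $\bbZ[v,v^{-1}]$-linear map on split Grothendieck groups. For the left-hand side it follows once graded freeness of $\Hom^\bullet$ has been established on the nose: a graded direct summand of a finitely generated graded free module over the polynomial ring $R^J$ is projective, hence free, provided the grading is bounded below, which is the case here. This reduces the theorem to the case where both $B_1$ and $B_2$ are generalized Bott-Samelson bimodules, for which I need to establish both graded freeness over $R^J$ and the rank identity.

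For the main computation, I would exploit the iterated tensor product form $BS(\vec{I},\vec{J})=R^{I_0}\otimes_{R^{J_1}}R^{I_1}\otimes_{R^{J_2}}\cdots\otimes_{R^{J_k}}R^{I_k}(\ell(\vec{I},\vec{J}))$ and strip off one factor at a time. The two key ingredients are the tensor-hom adjunction and the Frobenius isomorphism $\Hom^\bullet_{R^{J_h}-}(R^{I_{h-1}},-)\cong R^{I_{h-1}}\otimes_{R^{J_h}}-$ up to a grading shift controlled by $\ell(J_h)-\ell(I_{h-1})$, which arises from the Frobenius trace $\partial_{w_{I_{h-1}}w_{J_h}}$ together with the comultiplication element $\Delta_{J_h}^{I_{h-1}}$. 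After iterating, the entire Hom space collapses to a bimodule built by alternating inductions and restrictions along the tower, which is visibly graded free over $R^J$ with explicit graded rank in $\bbZ[v,v^{-1}]$.

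The third step is to match this rank with the Hecke pairing $\langle\overline{\cha(B_1)},\cha(B_2)\rangle$. The character of a generalized Bott-Samelson bimodule $BS(\vec{I},\vec{J})$ is, by design of $\cha$ on singular bimodules, a product in $\calH$ of the Kazhdan-Lusztig elements $\undH_{J_h}$ interacting with the identities of $\calH^{I_h}$, and the pairing as defined in \cite[\S 2.3]{W4} is constructed precisely to mirror this kind of decomposition: the identity $\langle H\cdot\undH_{I},K\rangle=\langle H,K\cdot\undH_{I}\rangle$ on the Hecke side categorifies the tensor-hom adjunction along $R^J\subseteq R^I$ on the bimodule side. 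A comparison by induction on $k$, unwinding one factor at a time on both sides simultaneously, completes the proof.

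The main obstacle is the careful bookkeeping of grading shifts throughout the iterated adjunction: the convention $\ell(\vec{I},\vec{J})=\sum_h(\ell(J_h)-\ell(I_h))$ in the definition of $BS(\vec{I},\vec{J})$, the degree shift $-2(\ell(J_h)-\ell(I_{h-1}))$ introduced by the Frobenius trace, and the $v$-normalizations implicit in the Kazhdan-Lusztig pairing must all be balanced exactly. A secondary, softer difficulty is establishing the freeness of Hom out of summands, which typically relies on the Cohen-Macaulay property of the bimodules over $R^J$, or equivalently on the standard graded version of Nakayama. Once these normalizations are pinned down, the matching with the Hecke pairing is essentially a formal consequence of the way both sides are built from the same basic adjunction.
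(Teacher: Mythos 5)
The paper does not prove this statement; it is quoted directly from Williamson \cite[Theorem~7.4.1]{W4}, so the relevant comparison is with the proof given there rather than with anything in this paper.

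Your reduction to generalized Bott--Samelson bimodules (via additivity, compatibility with shifts, and the fact that a graded direct summand of a bounded-below finitely generated graded free $R^J$-module is free) is sound, and the observation that the Frobenius extension structure lets you peel factors off $\Hom^\bullet(BS(\vec{I},\vec{J}),B_2)$ by iterated adjunction is a legitimate first move. The problem is the claim that what remains is ``visibly graded free with explicit graded rank'' and that matching the Hecke pairing is ``essentially a formal consequence.'' After peeling off all of $B_1$ you are left with something of the shape $\Hom_{(R^J,R^J)}(R^J,B_2')$ — the $R^J$-centralizer of a new generalized Bott--Samelson bimodule $B_2'$ — and it is a genuine theorem, not an observation, that this is graded free over $R^J$ of the predicted rank. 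In \cite{W4} this is established through the support ($\nabla$-) filtration: singular Bott--Samelson bimodules admit filtrations by standard bimodules, one computes $\Hom$ and $\Ext^1$ between standard and costandard objects, and freeness together with the rank formula is bootstrapped from there. Relatedly, $\cha$ is not defined so that $\cha(BS(\vec{I},\vec{J}))$ is a product of $\undH_{J_h}$'s; it is defined via the graded ranks of the subquotients of the support filtration, and the product formula for $\cha$ on Bott--Samelson bimodules is itself a theorem about supports, not a design choice. So the bookkeeping you flag as the ``main obstacle'' (grading shifts, pairing normalizations) is indeed routine, but the step you call ``essentially formal'' is the nonformal core of the argument, and as written your proposal gives no route to it.
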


We can bundle together all the categories of $(J,I)$-singular Soergel bimodules into a single $2$-category.

\begin{definition}
	The \emph{$2$-category of singular Soergel bimodules} $\calS\sbim$ is the $2$-category whose objects are the subsets of $S$ and such that, given two subsets $J,I\cug S$, we have
	\[\Hom_{\calS\sbim}(J,I)={}^J\sbim^I.\]
\end{definition}

Reduced translating sequences \cite[Definition 1.3.1]{WPhD} are the analogue of reduced expression for double cosets of Coxeter groups. Since we are only interested in one-sided singular Soergel bimodules, we give a simpler definition which is valid in our setting.

\begin{definition}
	Let $(\vec{I},\vec{J})$ be a translation pair with $\vec{I}=(\emptyset,I_1,I_2,\ldots,I_k)$. Let 
	$v_1=w_{J_1}w_{I_1}$ and $v_{h}=v_{h-1}w_{J_h}w_{I_h}$ for every $h\leq k$.
	We call $v_k$ the \emph{end-point} of $(\vec{I},\vec{J})$.

	We say that $(\vec{I},\vec{J})$ is \emph{reduced} if for every $h<k$ we have 
	\[\ell(v_hw_{J_{h+1}}) = \ell(v_h)+\ell(w_{J_{h+1}}).\]
\end{definition}

If $(\vec{I},\vec{J})$ is a reduced translation pair with end-point $w\in W^I$, then $B_w^I$ is a direct summand of $BS(\vec{I},\vec{J})$ with multiplicity $1$.

\begin{remark}
	Our main results in this paper are in type A, so we decided to
	only discuss this case in our exposition, However, the results in this section remain valid for an arbitrary Coxeter system $(W,S)$ under the assumption that all the subsets $I\cug S$ considered are finitary, i.e. that $W_I$ is a finite subgroup. 
\end{remark}

\subsection{Diagrammatic singular Soergel bimodules}

There is an alternative and powerful approach (discussed in \cite{ESW,E3,EL}) to the $2$-category of singular Soergel bimodules via diagrams, which we recall in this section.

\begin{definition}
A \emph{singular $S$-diagrams} is a finite collection of oriented $1$-manifolds with boundary, colored by elements of $S$, embedded in the strip $\bbR\times [0,1]$ and whose boundary is embedded in $\bbR\times\{0,1\}$, the boundary of the strip.

Two $1$-manifolds associated to colors $s\neq t$ can intersect only transversely and cannot intersect on the boundary of the strip. Moreover, two $1$-manifolds associated to the same color cannot intersect at all.

We refer to connected components of the complement of the $1$-manifolds in the strip as \emph{regions}. Each region is labeled by a subset $I\cug S$ compatibly with the following rule:
if two subsets are on two regions bordering the same $1$-manifold labeled by $s$, then they differ by the single element $\{s\}$ with the one on the right side of the $1$-manifold (while looking in the direction of its orientation) being larger. 

Moreover, a region with label $I$ can be decorated by a polynomial $f\in R^I$.
\begin{example}
Let $S=\{{\color{red} r},{\color{blue} b},{\color{green} g},{\color{violet}v}\}$. The following is an example of a singular $S$-diagram where we have labeled some of the regions in gray. In this example $f_1\in R^{v}$ and $f_2\in R^{\{r,g,b\}}$.
\begin{center}
 \begin{tikzpicture}[scale=0.4] 
 \draw (12,0) to (-4,0);
 \draw(12,8) to (-4,8);
\draw[thick,red, -triangle 45] (10,0) .. controls (10,5) and (0,0) .. (0,8);
\draw[thick,blue, -triangle 45] (1,0) to [out=90,in=-135] (2,4) to [out=45,in=-90] (5,8);
\draw[thick,green, -triangle 45, bend right] (4,0) to (3,8);
\draw[thick,violet,-triangle 45] (8,0) to [out=90,in=0] (5,7) to [out=180,in=90] (3,0);
\node[gray] at (10,4) {\scriptsize $\{r,b,g,v\}$};
\node[gray] at (6.5,3.5) {\scriptsize $\{r,b,g\}$};
\node[gray] at (6,1.5) {\scriptsize $\{g,b\}$};
\node[gray] at (4,2) {\scriptsize $\{b\}$};
\node[gray] at (2,2) {\scriptsize $\{v,b\}$};
\node[gray] at (-1,3) {\scriptsize $\{v\}$};

\node at (-1,6) {$f_1$};
\node at (6,5) {$f_2$};
\end{tikzpicture}
\end{center}

\end{example}

To a singular $S$-diagram we associate a bottom sequence and a top sequence of subsets, respectively given by the sequence of subsets appearing on the lower and on the upper boundary of the strip.
\end{definition} 

The only sequences of sets which can appear as bottom (or top) sequence of some $S$-diagram are those in which two consecutive sets in the sequence differ exactly by a single element of $S$. We call these sequences \emph{strict translation sequences}.

Singular $S$-diagrams will always be considered up to isotopy. If the top sequence of a diagram $\calD_1$ coincides with the bottom sequence of a diagram $\calD_2$ then we can stack $\calD_2$ on top of $\calD_1$ to obtain a new diagram which denoted by $\calD_2\circ \calD_1$.

Similarly, if the last element of the bottom (and top) sequence of $\calD_1$ coincides with the first element of the bottom sequence of $\calD_2$ we can stack horizontally two diagrams $\calD_1$ and $\calD_2$ to obtain a new diagram which denoted by $\calD_1\otimes \calD_2$.

Each $S$-diagram can be obtained, up to isotopy, by stacking together the ``building boxes'' of \Cref{boxes}. In every box $J$ denotes the largest subset and $I$ the smallest.
The degree of the building boxes is defined as in the third column of \Cref{boxes}.
We define the degree of a diagram to be the sum of the degree of its building boxes.

\begin{table}[ht]
		\begin{tabularx}{\textwidth}{c c c}
			$\frD$ & $\calS\sbim$ & degree \\ 
			\hline
			\begin{minipage}{2cm}
				\begin{tikzpicture}
					\draw (-1,0) rectangle (1,2);
					\draw[thick,red, -triangle 45] (0,0) to (0,2);
					\node at (0.5,1) {J};
					\node at (-0.5,1) {I};
					\node[red] at (0.2,0.2) {$s$};
				\end{tikzpicture} 
			\end{minipage}	
			& $R^I\otimes_{R^J}R^J(\eta)\xra{id}R^I\otimes_{R^J}R^J(\eta)$ & 0\\
			\begin{minipage}{2cm}				
				\begin{tikzpicture}
					\draw (-1,0) rectangle (1,2);
					\draw[thick,red, triangle 45-] (0,0) to (0,2);
					\node at (0.5,1) {I};
					\node at (-0.5,1) {J};
					\node[red] at (0.2,1.8) {$s$};
				\end{tikzpicture}
			\end{minipage}
			& ${}_JR^I\xra{id}{}_JR^I$ & 0\\ 
 			\begin{minipage}{2cm}
 				\begin{tikzpicture}
					\draw (-1,0) rectangle (1,2);
					\draw[thick,red, triangle 45-] (0.5,0) to [out=90,in=0] (0,1) to [out=180,in=90] (-0.5,0);
					\node at (0,1.5) {I};
					\node at (0,0.25) {J};
					\node[red] at (-0.7,0.2) {$s$};
				\end{tikzpicture}
			\end{minipage}	
			& 
			\begin{tabular}{c}
				$f_{\carcu}:R^I\otimes_{R^J} R^I(\eta) \raw R^I$\\ 
				$f\otimes g\mapsto fg$
			\end{tabular}
			& $\ell(J)-\ell(I)$\\
			\begin{minipage}{2cm}
				\begin{tikzpicture}
					\draw (-1,0) rectangle (1,2);
					\draw[thick,red, -triangle 45] (0.5,2) to [out=-90,in=0] (0,1) to [out=180,in=-90] (-0.5,2);
					\node at (0,1.75) {J};
					\node at (0,0.5) {I};
					\node[red] at (0.7,1.8) {$s$};
				\end{tikzpicture} 
			\end{minipage}			
			&
			\begin{tabular}{c}
				$f_{\carcd}:R^I\raw R^I\otimes_{R^J} R^I(\eta) $\\ 
				$1\mapsto \Delta_J^I$
			\end{tabular} 
			& $\ell(J)-\ell(I)$\\
			\begin{minipage}{2cm} 
 				\begin{tikzpicture}
					\draw (-1,0) rectangle (1,2);
					\draw[thick,red, -triangle 45] (0.5,0) to [out=90,in=0] (0,1) to [out=180,in=90] (-0.5,0);
					\node at (0,1.5) {J};
					\node at (0,0.25) {I};
					\node[red] at (.7,0.2) {$s$};
				\end{tikzpicture} 
			\end{minipage}				
			&
			\begin{tabular}{c}
				$f_{\carau}:{}_J R^I_J(\eta)\raw R^J$\\
				$f\mapsto \partial_J^I(f)$
			\end{tabular}
			& $\ell(I)-\ell(J)$\\
			\begin{minipage}{2cm} 			
	 			\begin{tikzpicture}
					\draw (-1,0) rectangle (1,2);
					\draw[thick,red, triangle 45-] (0.5,2) to [out=-90,in=0] (0,1) to [out=180,in=-90] (-0.5,2);	
					\node at (0,1.75) {I};
					\node at (0,0.5) {J};
					\node[red] at (-0.7,1.8) {$s$};
				\end{tikzpicture}
			\end{minipage}
			 &
			\begin{tabular}{c}
				$f_{\carad}:R^J\raw {}_J R^I_J(\eta)$\\
				$f\mapsto f$
			\end{tabular} 
			& $\ell(I)-\ell(J)$\\
			\begin{minipage}{2cm}
			 	\begin{tikzpicture}
					\draw (-1,0) rectangle (1,2);
					\draw[thick,red, -triangle 45] (0.76,0) to (-0.75,2);
					\draw[thick,blue, triangle 45-] (0.75,2) to (-0.75,0);
					\node at (0.5,1) {J};
					\node at (-0.5,1) {I};
					\node at (0,0.5) {K};
					\node at (0,1.5) {L};
					\node[red] at (0.8,0.3) {$s$};
					\node[blue] at (-0.8,0.3) {$t$};
				\end{tikzpicture}
			\end{minipage}
 			&
			\begin{tabular}{c}
				 $f_{\duarrow}:R^I\otimes_{R^K} R^K\otimes_{R^J}R^J(\eta)\raw R^I\otimes_{R^L} R^L\otimes_{R^J}R^J(\eta)$\\
				$f\otimes 1\otimes 1\mapsto f\otimes 1\otimes 1$
			\end{tabular} 
			& 0
		\end{tabularx}
		\caption{The building boxes.}\label{boxes}
	\end{table}

\begin{definition}
We define the $2$-category $\frD$ as follows: the objects are subsets of $S$, $1$-morphisms between $I$ and $J$ are the strict translation sequences starting in $I$ and ending $J$, and $2$-morphisms between two sequences $K_1$ and $K_2$ are isotopy classes of singular $S$-diagrams with bottom sequence $K_1$ and top sequence $K_2$.
\end{definition}

Now we define a $2$-functor $\Xi:\frD\raw \calS\sbim$. The functor $\Xi$ is the identity on objects. On $1$-morphisms $\Xi$ sends a strict translation sequence to the corresponding generalized Bott--Samelson bimodule, i.e. $\Xi(I)=R^I$ and 
\[\Xi(I_1,\ldots,I_k,I_{k+1})=\begin{cases} \Xi(I_1,\ldots,I_k)\otimes_{R^{I_{k}}}R^{I_{k+1}}(\ell(I_k)-\ell(I_{k+1}))&\text{if }I_{k+1}\subset I_{k}\\
\Xi(I_1,\ldots,I_k)\otimes_{R^{I_{k+1}}}R^{I_{k+1}}&\text{if }I_{k}\subset I_{k+1}.\end{cases}\]
To define $\Xi$ on $2$-morphisms we just need to specify the image of the generating $2$-morphisms. This is done in the second column of \Cref{boxes}.

\begin{theorem}[{\cite{ESW}}]
The $2$-functor $\Xi:\frD\raw \calS\sbim$ is well-defined and it is full on $2$-morphisms.
\end{theorem}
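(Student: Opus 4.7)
The plan is to address well-definedness and fullness separately, with the former a direct verification via a short list of local relations and the latter leveraging Soergel's Hom formula combined with a singular analogue of light leaves.

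For well-definedness, since $\Xi$ is defined on the generating $2$-morphisms of Table \ref{boxes}, I must check that every isotopy of singular $S$-diagrams is sent to an equality of bimodule maps. Isotopy of such diagrams is generated by a short list of local moves: the zig-zag (snake) relations for oriented cups and caps, the naturality of crossings with respect to cups/caps (``pitchfork'' and Reidemeister II-type moves for distinctly colored strands), and the interchange law for boxes placed side by side in disjoint portions of the strip. I would verify these one at a time. The key computation is the zig-zag relation: tracing an element of $R^I$ around the snake formed by $f_{\carcd}$ followed by $f_{\carau}$ yields the image of $1$ under the map $f \mapsto \partial_{w_Jw_I}(\Delta_J^I \cdot f)$, which is the identity precisely because $(\{x_i\},\{y_i\})$ are dual bases for the Frobenius extension $R^J \subset R^I$. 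Naturality of crossings with cups and caps reduces to the tautological identification of $R^J\otimes_{R^K}R^K\otimes_{R^I}R^I$ with $R^J\otimes_{R^L}R^L\otimes_{R^I}R^I$ via the canonical swap of intermediate tensor factors over a common base ring. Finally, the freedom to slide a polynomial decoration $f\in R^I$ anywhere within an $I$-labeled region is compatible with isotopy because $f$ is $W_I$-invariant and the bimodule structure on each edge is symmetric.

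For fullness, I would compare two formulas for the graded rank of $\Hom^\bullet(\Xi(K_1),\Xi(K_2))$ between generalized Bott-Samelson bimodules. On one hand, Theorem \ref{SHF} gives the graded rank as the Hecke pairing $\langle \bar{\cha(\Xi(K_1))},\cha(\Xi(K_2))\rangle$. On the other, one constructs a spanning family of diagrammatic morphisms --- the singular analogue of Libedinsky's light leaves --- by induction on the length of $K_1$, at each step using a cup, cap, or crossing to ``absorb'' the last color of $K_1$ into $K_2$. The number of such light leaves, weighted by their degree, can be expanded combinatorially as the same Hecke pairing. Since the $R^{I_0}$-module generated by these diagrams has graded rank at least the Soergel rank, the diagrammatic image must span the entire bimodule Hom space, yielding fullness.

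The main obstacle is the fullness statement, specifically the construction and combinatorial enumeration of singular light leaves. Already in the regular (non-singular) Soergel calculus this is a delicate argument, and in the singular setting it is significantly more involved because one must track finitary labels of every region and handle the interplay between cups/caps (which change the label of a neighboring region) and crossings (which permute labels). The subtle point is that the comparison with Soergel's Hom formula must be carried out using \emph{only} the isotopy relations of $\frD$ --- no additional relations are imposed --- so one has to confirm that light leaves built from the building boxes of Table \ref{boxes}, taken up to isotopy alone, already achieve the correct graded rank.
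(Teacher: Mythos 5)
The paper itself does not prove this statement: it is cited to \cite{ESW}, and the surrounding text makes clear the author is importing the result, not establishing it. So there is no ``paper's own proof'' to compare against; I can only assess your reconstruction on its merits.

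Your well-definedness argument is essentially sound: checking the zig-zag (snake) relations via the dual-bases property of the Frobenius trace $\partial_{w_Iw_J}$, the naturality of crossings as canonical identifications of the $(R^J,R^I)$-bimodule $R^J\otimes_{R^K}R^K\otimes_{R^I}R^I\cong R^J\cong R^J\otimes_{R^L}R^L\otimes_{R^I}R^I$, and the interchange law is exactly what is required for planar-isotopy invariance, and there are no hidden relations at this stage. The genuine gap is in fullness. You write that because the degree-weighted count of singular light leaves equals the Hecke pairing, ``the $R^{I_0}$-module generated by these diagrams has graded rank at least the Soergel rank,'' hence the diagrams span. But a count of generators bounds the graded rank of the generated submodule from \emph{above}, not below; without linear independence, the generated submodule could be strictly smaller. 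The independence (or, alternatively, the spanning) must be established separately --- in the non-singular case of \cite{EW2} this is done by a two-step cellular argument: one first shows double leaves span $\End$ modulo lower terms via an inductive filtration argument, and only then combines with the count and a localization/pairing argument to conclude the light leaves are an $R$-basis. Your proposal silently assumes the conclusion of that inductive spanning argument. Moreover, whether the fullness in \cite{ESW} actually proceeds via a singular light leaves construction is doubtful; their route is closer to Williamson's original analysis of the standard filtration on $\Hom$-spaces between singular Bott-Samelson bimodules, producing explicit structure morphisms hitting each graded layer, which avoids having to first build and enumerate a light-leaves basis. So as written, the fullness half of the proposal has both a logical reversal and an unproved construction at its core.
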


The functor $\Xi$ is not an equivalence of $2$-category, even after taking the Karoubi envelope of $\frD$. To achieve an equivalence, one would need to impose several relations on the category $\frD$. Some of these relations are general for any \emph{hypercube} of Frobenius extensions and are discussed in \cite{ESW}. In (finite or affine) type A the full list of relations that we need to add to $\frD$ to obtain an equivalence with $\calS\sbim$ are described in \cite[\S 2.5]{EL} (at the time of writing a proof has not been yet published).
In this paper we will not actually need any of this as we will only make use of the diagrammatics since they provide an insightful and convenient way to draw and describe morphisms between singular Soergel bimodules.

The following definition is the diagrammatic counterpart of the duality functor $\bbD$.
\begin{definition}
	Given a $S$-diagram $\calD$ we define $\bar{\calD}$ the $S$-diagram obtaining by reflecting $\calD$ over the line $\{y=\frac12\}$ and then changing the orientation of all the $1$-manifolds.
	
	We say that $\bar{\calD}$ is the \emph{flip (upside down)} of $\calD$.

\begin{figure}[h]
\begin{center}
\begin{tikzpicture}
\draw (-1,0) rectangle (2.5,2);
\draw[thick,red, -triangle 45] (0.76,0) to (-0.75,2);
\draw[thick,blue, -triangle 45] (-0.75,0) to [in=219] (0.5,1.5) to [out=39,in= 180] (1.2,1.75) to [out=0,in=90] (2,0);
\node at (1,1) {J};
\node at (-0.5,1) {I};
\node at (0,0.5) {K};
\node at (0,1.5) {L};
\node[red] at (0.8,0.3) {$s$};
\node[blue] at (-0.8,0.3) {$t$};
\node at (-1.8,1) {$\calD=$};

\begin{scope}[xshift=6cm, yshift=2cm, yscale = -1]
\node at (-2,1) {$\rightsquigarrow \bar{\calD}=$};
\draw (-1,0) rectangle (2.5,2);
\draw[thick,red, triangle 45-] (0.76,0) to (-0.75,2);
\draw[thick,blue, triangle 45-] (-0.75,0) to [in=219] (0.5,1.5) to [out=39,in= 180] (1.2,1.75) to [out=0,in=90] (2,0);
\node at (1,1) {J};
\node at (-0.5,1) {I};
\node at (0,0.5) {K};
\node at (0,1.5) {L};
\node[red] at (0.8,0.3) {$s$};
\node[blue] at (-0.8,0.3) {$t$};
\end{scope}
\end{tikzpicture}
\caption{An example of a flip.}
\end{center}
\end{figure}
\end{definition}

If $f$ is a $2$-morphism in $\calS\sbim$, there exists a $S$-diagram $\calD$ such that $\Xi(\calD)=f$.
Then we can define the flip of $f$ as $\bar{f}:=\Xi(\bar{\calD})$. 

\begin{lemma}
	The flip $\bar{f}$ of a $2$-morphism $f$ is well defined.
\end{lemma}
\begin{proof}
We need to show that if $\calD'$ is another $S$-diagram such that $\Xi(\calD')=f$, then we also have $\Xi(\bar{\calD})=\Xi(\bar{\calD'})$.
We can see this in two steps by viewing flip along the horizontal axis as the composition of a flip along the vertical axis and a rotation by $180^\circ$.
First notice that if we flip a diagram $\calD$ along the vertical axis, then the corresponding morphism is $\Phi\circ \Xi(\calD) \circ \Phi$ where $\Phi$, is the auto-equivalence of $\calS\sbim$ which swaps left and right (i.e., it sends ${}^J\sbim^I$ to ${}^I\sbim^J$). 

Then, using the Frobenius relations as in the picture below, we deduce that if $\Xi(\calD)=\Xi(\calD')$, then also $\Xi(\rotatebox[origin=c]{180}{$\calD$})=\Xi(\rotatebox[origin=c]{180}{$\calD'$})$ (here $\rotatebox[origin=c]{180}{$\calD$}$ denotes the rotated diagram).
\[
\begin{tikzpicture}[scale=0.7]
\draw (-1,0) rectangle (1,1);
\draw (-0.8,0) to (-0.8,-0.2) to [out=-90,in=0] (-1,-0.4) to [out=180,in=-90]  (-1.2,-0.2) to (-1.2,2.3);
\draw (-0.5,0) to (-0.5,-0.2) to [out=-90,in=0] (-1,-0.7) to [out=180,in=-90]  (-1.5,-0.2) to (-1.5,2.3);
\draw (.8,0) to (.8,-0.2) to [out=-90,in=0] (-1,-1.2) to [out=180,in=-90]  (-2.5,-0.2) to (-2.5,2.3);
\node at (0,0.5) {\Large $\calD$};
\node at (0.15,1.15) {$\cdots$};
\node at (-2,0.5) {$\cdots$};
\node at (3.2,0.5) {$=$};
\begin{scope}[yscale=-1,xscale=-1,yshift=-1cm]
\draw (-0.8,0) to (-0.8,-0.2) to [out=-90,in=0] (-1,-0.4) to [out=180,in=-90]  (-1.2,-0.2) to (-1.2,2.3);
\draw (.5,0) to (.5,-0.1) to [out=-90,in=0] (-1,-0.9) to [out=180,in=-90]  (-2.2,-0.1) to (-2.2,2.3);
\draw (.8,0) to (.8,-0.2) to [out=-90,in=0] (-1,-1.2) to [out=180,in=-90]  (-2.5,-0.2) to (-2.5,2.3);
\node at (-0.15,1.2) {$\cdots$};
\node at (-1.7,0.5) {$\cdots$};
\end{scope}
\begin{scope}[xshift= 5cm]
\draw (-1,0) rectangle (1,1);
\draw (-0.8,0) to (-0.8,-1.3);
\draw (0.5,0) to (0.5,-1.3);
\draw (.8,0) to (.8,-1.3);
\draw (-0.8,1) to (-0.8,2.3);
\draw (0.5,1) to (0.5,2.3);
\draw (.8,1) to (.8,2.3);
\node at (0,0.5) {\Large \rotatebox{180}{$\calD$}};
\node at (-0.15,1.3) {$\cdots$};
\node at (-0.15,-0.3) {$\cdots$};
\end{scope}
\end{tikzpicture}\qedhere\]
\end{proof}

In \Cref{adjoint} we show that taking the flip of a morphism amounts to taking the adjoint with respect to the intersection forms on generalized Bott--Samelson bimodules.

\section{Singular Soergel calculus for Grassmannians}\label{Grass}
In this chapter we specialize to the case of the complex Grassmannians, that is the case when $I$ is a maximal proper subset of $S$.
We fix now once for all $i$ with $1\leq i \leq n-1$ and $I=S\setminus\{s_i\}$. We have $W_I\cong S_{i}\times S_{n-i}\cug S_n=W$.

The Grassmannian $\Gr(i,n)$ is a smooth complex projective variety, isomorphic to $SL_n(\bbC)/P$, where $P$ is the maximal parabolic subgroup of $SL_n(\bbC)$ corresponding to $I$:
\[ P = \left\{ \begin{pmatrix}
A & B \\ 0 & C
\end{pmatrix}\in SL_n(\bbQ) \mid A\in M_{i\times i}(\bbQ),\; B\in M_{n-i\times i}(\bbQ),\; C \in M_{n-i\times n-1}(\bbQ) \right\}.\]
Let $T\cug SL_n(\bbC)$ be the maximal torus of diagonal matrices. The $T$-equivariant cohomology of $\Gr(i,n)$ (cf. \cite[Prop. 1]{Brion}) is
\[H_T^\bullet(\Gr(i,n),\bbQ)\cong R\otimes_{R^{W}}R^{I}.\]

Let $\calD^b_T(\Gr(i,n),\bbQ)$ denote the $T$-equivariant derived category of constructible sheaves of $\bbQ$-vector spaces on $\Gr(i,n)$. If $\calF\in \calD^b_T(\Gr(i,n))$, the cohomology $H^\bullet(\Gr(i,n),\calF)$ is in a natural way a module over $H^\bullet_T(\Gr(i,n),\bbQ)$, hence in particular a $(R,R^I)$-bimodule.

 Let $\calK$ be the full subcategory of $\calD^b_T(\Gr(i,n))$ whose objects are direct sums of shifts of simple $T$-equivariant $\bbQ$-perverse sheaves on $\Gr(i,n)$. Then the cohomology induces an equivalence $H^\bullet:\calK\raw \sbim^I$ 	(cf. \cite[Erweiterungssatz 5]{S1} and \cite[Theorem (1.1)]{Gi}). If $X_w\cug \Gr(i,n)$ denotes the Schubert variety associated to $w\in W^I$ and $IC(X_w,\bbQ)\in \calK$ the intersection cohomology sheaf of $X_w$, then in particular 
$IH^\bullet_T(X_w,\bbQ):=H^\bullet(\Gr(i,n),IC(X_w,\bbQ))$ is isomorphic to the indecomposable Soergel bimodule $B_w^I$ as a $(R,R^I)$-bimodule.

\subsection{Paths}

Consider the following set of $n$-tuples:
\[\Lambda_{n,i}:=\{\lambda=(\lambda_1,\ldots,\lambda_n) \in \{\upath,\dpath\}^n\mid \#\{k \mid \lambda_k = \dpath\}=i\}\] 
The natural action of $W=S_n$ on $n$-tuples induces a bijection
\[W^I\longleftrightarrow \Lambda_{n,i}\]
\[w\mapsto \lambda^w=(\lambda^w_1,\lambda^w_2,\ldots,\lambda^w_n):=w(\dpath,\dpath,\ldots,\dpath,\upath,\upath,\ldots,\upath)\]

\begin{definition}\label{pathdef}
By \emph{path} we mean a piecewise linear path in $\bbR^2$ which is union of segments of the form $(x,y)\rightarrow (x+1,y+1)$ or $(x,y)\rightarrow (x+1,y-1)$, for $x,y\in \bbZ$.
	Every element $\lambda\in \Lambda_{n,i}$ can be thought as a path from $(0,i)$ to $(n,n-i)$, where to every $\upath$ in $\lambda$ it corresponds a segment $(x,y)\rightarrow (x+1,y+1)$ and to every $\dpath$ corresponds a segment $(x,y)\rightarrow (x+1,y-1)$. Hence, we can identify $\Lambda_{n,i}$ with the set of paths from $(0,i)$ to $(n,n-i)$. By a slight abuse of terminology we also call the elements in $\Lambda_{n,i}$ paths.
\end{definition}

In the following we will often identify an element $w\in W^I$ with its corresponding path $\lambda^w\in \Lambda_{n,i}$. For instance, we will denote an indecomposable singular Soergel bimodule by $B_w^I$ or by $B_{\lambda^w}^I$ indistinctly.

We can deduce many properties of an element $w\in W^I$ directly in terms of the corresponding path $\lambda^w$. For example, the length $\ell(w)$ is half the area of the region between $\lambda^w$ and $\lambda^{id}$.

\begin{definition}
Let $j$ be an integer with $0\leq j\leq n$ and $\lambda$ be a path. The \emph{height} of $\lambda$ at $j$ is the integer $\hgt_j(\lambda)$ such that the point $(j,\hgt_j(\lambda))$ belongs to the path $\lambda$.
\end{definition}

We have $v\leq w$ if and only if the path $\lambda^v$ lies completely below $\lambda^w$, i.e. if $\hgt_j(\lambda^v)\leq \hgt_j(\lambda^w)$ for every $j$.
 In this case we will simply write $\lambda^v\leq \lambda^w$.
 If $\lambda$ and $\mu$ are paths with $\lambda\leq \mu$ we denote by $\calA(\lambda,\mu)$ the region of the plane delimited by $\lambda$ and $\mu$. 
 If $\lambda=\lambda^{id}$ we denote $\calA(\lambda^{id},\mu)$ simply by $\calA(\mu)$.

From the path representation $\lambda^w$ of an element $w$ it is easy to recover its left descent set. We have:
\begin{itemize}
\item $s_jw\in W^I$ and $\ell(s_jw)=\ell(w)-1$ if $j$ is a \emph{peak} for $\lambda^w$, i.e. if $(\lambda^w_j,\lambda^w_{j+1})=(\upath,\dpath)$, 
\item $s_jw\in W^I$ and $\ell(s_jw)=\ell(w)+1$ if $j$ is a \emph{valley} for $\lambda^w$, i.e. if $(\lambda^w_j,\lambda^w_{j+1})=(\dpath,\upath)$, 
\item $s_jw\not\in W^I$ and $s_jwW_I=wW_I$ if $j$ is on a \emph{slope} of $\lambda^w$, i.e. if $(\lambda^w_j,\lambda^w_{j+1})=(\upath,\upath)$ or $(\lambda^w_j,\lambda^w_{j+1})=(\dpath,\dpath)$.
\end{itemize}

\begin{definition}\label{boxdef}
By \emph{box} we mean a square rotated by $45^\circ$ with side length $\sqrt{2}$ and whose center $(x,y)$ is a point with integral coordinates such that $x+y+i$ is odd.

 We fill the region $\calA(\lambda^w)$ with boxes and we label each boxes by the horizontal coordinate of its highest point as in \Cref{figfill}.
\end{definition}

\begin{figure}[H]
	\begin{center}
		\begin{tikzpicture}[x=\boxmini,y=\boxmini]
		\tikzset{vertex/.style={}}
		\tikzset{edge/.style={very thick}}
		\tabpath{+,-,+,-,+,+,-,-}
		\tikzset{edge/.style={}}
		\tabpath{-,-,-,-,+,+,+,+}
		\tikzset{edge/.style={dotted}}
		\tabpath{-,+,-,+,+,-,-}
		\tabpath{-,-,+,+,-,+,+}
		\tabpath{-,-,-,+,+,-}
		\tabpath{-,-,+,-,-}
		\node at (1,0) {$1$};
		\node at (2,-1) {$2$};
		\node at (3,-2) {$3$};
		\node at (3,0) {$3$};
		\node at (4,-3) {$4$};
		\node at (4,-1) {$4$};
		\node at (5,-2) {$5$};
		\node at (5,0) {$5$};
		\node at (6,-1) {$6$};
		\node at (6,1) {$6$};
		\node at (7,0) {$7$};
		\node at (7.5,2) {$\lambda^w$};
		\node at (-1,0) {\scriptsize $(0,4)$};
		\node at (9,0) {\scriptsize $(8,4)$};
		\end{tikzpicture}
		\caption{The thick line denotes the path $\lambda^w$ for $w=s_6s_1s_3s_5s_7s_2s_4s_6s_3s_5s_4\in S_8$. From the picture we immediately see that the set of peaks for $\lambda^w$ is $\{1,3,6\}$, the set of valleys is $\{2,4\}$ and the set of slopes is $\{5,7\}$.  }\label{figfill}
	\end{center}
\end{figure}

Any order in which we can remove a box from $\calA(\lambda^w)$ so that at any step the upper boundary of the remaining region is still a path in $\Lambda_{n,i}$ gives rise to a reduced expression. Moreover, all the reduced expressions of $w$ arise in this way.

From the description above of reduced expression it is easy to see that no reduced expression can contain a subword of the form $s_js_{j+1}s_j$.
In particular, the function $\rex_w:\{1,2,\dots,\}\raw \bbN$ given by 
\[\rex_w(j)= \left\lvert\left\{\begin{array}{c|c}\multirow{2}{*}{$k$} & i_k=j\text{ for a reduced expression }\\ &s_{i_1}s_{i_2}\ldots s_{i_k}\text{ of }w \end{array}\right\}\right\rvert
=\left\lvert\left\{\begin{array}{c}\text{boxes labeled by $j$ between} \\ \lambda^w\text{ and }\lambda^{id}\end{array}\right\}\right\rvert\]
is well-defined.
We have 
\[ \hgt_{\lambda^w}(j)=2\rex_j(w) + |i-j|.\]
Notice that for $v,w\in W^I$ we have $v\geq w$ if and only if $\rex_v(j)\geq \rex_w(j)$ for all $j$.

Moreover, if $v\geq w$ there exists $x\in W$ such that $v=xw$ and $\ell(v)=\ell(x)+\ell(w)$, therefore the Bruhat order on $W^I$ is generated by $sx>x$ with $s\in S$ and $\ell(sx)>\ell(x)$.

To simplify the notation, we will often use $J\cug \{1,2,\ldots,n\}$ to denote the set $\{s_j \mid j \in J\}\subset S$, e.g. we use $W^J$ and $W_J$ to denote $W^{\{s_j\mid j\in J\}}$ and $W_{\{s_j\mid j\in J\}}$ respectively.

We have now all the tools to prove the following simple Lemmas.

\begin{lemma}\label{valley}
	Assume that $w\in W^I$ has a valley in $j$ and that $\lambda^w_a=\ldots=\lambda^w_j=\dpath$ and $\lambda^w_{j+1}=\ldots=\lambda^w_b=\upath$. Assume that $v<w$.
	\begin{enumerate}[i)]
		\item If $x\in W_{[a,b-1]}$, then $xv\not\geq w$.
		\item Let $\hat{J}=[a,b-1]\setminus \{j\}$. Then, if $x\in W_{[a,b-1]}^{\hat{J}}$ we have $xw\in W^I$ and $\ell(xw)=\ell(x)+\ell(w)$.
	\end{enumerate}
\end{lemma}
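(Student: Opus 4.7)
The plan is to work entirely in the path model and exploit the V-shape of $\lambda^w$ on the interval $[a,b]$. The basic observation, used throughout, is that any $x \in W_{[a,b-1]}$ acts on a path $\lambda$ by permuting its entries $\lambda_a, \ldots, \lambda_b$; in particular, heights at positions $k \leq a-1$ and $k \geq b$ are preserved, and the multiset of steps on $[a,b]$ is unchanged.

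For part (i) I argue by contradiction. Assuming $xv \geq w$, combining $\lambda^v \leq \lambda^w$ with $\lambda^{xv} \geq \lambda^w$ and the height invariance outside $[a,b]$ forces $\hgt_k(\lambda^v) = \hgt_k(\lambda^w)$ for all $k \leq a-1$ and $k \geq b$. The crucial geometric observation is that $\hgt_j(\lambda^w)$ is the minimum height reachable from $\hgt_{a-1}(\lambda^w)$ in $j-a+1$ steps; hence the condition $\hgt_j(\lambda^v) \leq \hgt_j(\lambda^w)$ together with the matching heights at $a-1$ forces all of $\lambda^v_a, \ldots, \lambda^v_j$ to be $\dpath$, and symmetrically all of $\lambda^v_{j+1}, \ldots, \lambda^v_b$ to be $\upath$. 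Thus $\lambda^v$ coincides with $\lambda^w$ on $[a,b]$ and outside, forcing $v=w$, a contradiction.

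For part (ii), the condition $x \in W^{\hat{J}}_{[a,b-1]}$ means precisely that $x$, viewed as a permutation of $[a,b]$, is increasing on $[a,j]$ and on $[j+1,b]$, i.e.\ is a shuffle of the two blocks. To show $xw \in W^I$ I will check that $xw$ remains increasing on $\{1,\ldots,i\}$ and on $\{i+1,\ldots,n\}$: for $k_1 < k_2$ in $\{1,\ldots,i\}$, the images $w(k_1) < w(k_2)$ both lie in $D(\lambda^w)=\{k:\lambda^w_k=\dpath\}$, and since the downs of $\lambda^w$ inside $[a,b]$ are exactly $[a,j]$, a short case analysis (both in $[a,j]$, both outside $[a,b]$, or with one on each side) combined with the monotonicity of $x$ on $[a,j]$ yields $xw(k_1) < xw(k_2)$. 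A symmetric argument handles $\{i+1,\ldots,n\}$ via monotonicity on $[j+1,b]$.

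For the length identity $\ell(xw)=\ell(x)+\ell(w)$, I will use the formula $\ell(w)=\#\{(p,q): p<q,\ \lambda^w_p=\upath,\ \lambda^w_q=\dpath\}$ and split pairs by their intersection with $[a,b]$. Pairs with at most one endpoint inside $[a,b]$ contribute equally to both lengths, since the multiset on $[a,b]$ is preserved. For pairs with both endpoints in $[a,b]$ the count is zero for $\lambda^w$ (because of the pattern $\dpath^{j-a+1}\upath^{b-j}$) and, via the bijection $(p,q) \mapsto (x^{-1}(p), x^{-1}(q))$, equals the number of pairs $(n,m)$ with $n\in[a,j]$, $m\in[j+1,b]$, $x(n)>x(m)$; by the shuffle property all inversions of $x$ are of this form, so this count is $\ell(x)$. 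The main technical nuisance is keeping track of the interplay between the action of $x$ on positions and on values in this bijection; once that is done, the V-shape at the valley makes every step line up cleanly.
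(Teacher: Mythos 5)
Your part (i) is the same argument as the paper's, just phrased as a direct contradiction instead of a contrapositive: both exploit height invariance of $\lambda^{xv}$ outside $[a,b]$ together with the fact that the pattern $\dpath^{j-a+1}\upath^{b-j}$ is the unique lowest path on $[a,b]$ with its given endpoints, forcing $\lambda^v=\lambda^w$ if no height gap exists outside $[a,b-1]$.

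Part (ii) is correct but proceeds by a genuinely different route. The paper proves the stronger explicit statement that $\lambda^{xw}$ is obtained by ``stacking'' the small path $\lambda^x_{\hat J}\in\Lambda_{b-a,j-a}$ on top of $\lambda^w$ over the interval $[a,b]$, and deduces both $xw\in W^I$ and the length additivity by induction on $\ell(x)$: writing $x=s_k x'$ with $\ell(x')=\ell(x)-1$, the generator $s_k$ is a valley for $\lambda^{x'w}$ because it is one for the small path. You instead argue directly: $xw\in W^I$ by checking that the order-preserving shuffle $x$ sends the set of $\dpath$-positions (resp.\ $\upath$-positions) of $\lambda^w$ order-preservingly, using the case split on $[a,j]$, $[j+1,b]$ and the complement; and $\ell(xw)=\ell(x)+\ell(w)$ by the inversion-pair formula, splitting pairs by their intersection with $[a,b]$ and observing that the cross-block inversions on $[a,b]$ contribute exactly $\ell(x)$ while $\lambda^w$ contributes zero there and the mixed and outside contributions are permutation-invariant. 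Your computation is more direct and avoids the induction, but it gives only the length identity; the paper's version yields the explicit stacking description of $\lambda^{xw}$, which is also what motivates the remark right after the lemma and the later constructions (e.g.\ the elements $x_{R(D)}$ in \S\ref{RQexplicit}). Both proofs are sound; the paper trades a slightly longer induction for a reusable geometric picture.
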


Roughly, the second point is stating that we can stack a ``small tableau'' on top of $\lambda^w$ to obtain the tableau of $\lambda^{xw}$.

\begin{figure}[H]
	\begin{center}
		\begin{tikzpicture}[x=\boxmini,y=\boxmini]
		\tikzset{vertex/.style={}}
		\tikzset{edge/.style={very thick}}		
		\tabpath{+,+,-,-,+,+,+,-,-}
		\tabpathshift{+,-,+,-}{-8}{2}
		\tikzset{edge/.style={}}
		\tabpath{-,-,-,-,+,+,+,+,+}
		\tabpathshift{-,-,+,+}{-8}{2}
		\tikzset{edge/.style={dotted}}
		\tabpath{-,+,-,+,+,-,-}
		\tabpath{-,-,+,+,-,+,+}
		\tabpath{-,-,-,+,+,-}
		\tabpath{-,-,+,-,-}
		\tabpath{+,-,+,-,+,+,-,-}
		\tabpath{+,-,+,-,+,+,-,+}
		\tabpathshift{-,+,-,+}{-8}{2}
		\node at (1,0) {$1$};
		\node at (2,-1) {$2$};
		\node at (2,1) {$2$};
		\node at (3,-2) {$3$};
		\node at (3,0) {$3$};
		\node at (4,-3) {$4$};
		\node at (4,-1) {$4$};
		\node at (5,-2) {$5$};
		\node at (5,0) {$5$};
		\node at (6,-1) {$6$};
		\node at (6,1) {$6$};
		\node at (7,0) {$7$};
		\node at (7,2) {$7$};
		\node at (8,1) {$8$};
		\node at (9.1,2) {$\lambda^w$};
		\node at (-6,1) {$4$};
		\node at (-5,2) {$5$};
		\node at (-7,2) {$3$};
		\node at (-8.5,2.7) {$\lambda^x_{\hat{J}}$};
		\draw[->,very thick] (-4,3) to [out=20,in=180] (0,4) to [out=0,in=110] (4,2) ;
		\end{tikzpicture}
		\caption{In this example $\lambda^w$ has a  valley in $4$ and we have $a=3$ and $b=7$. So we can stack a tableau corresponding to an element $x\in W^{\{3,5,6,7\}}_{[3,7]}$ on top of it.}
	\end{center}
\end{figure}

\begin{proof}
	Let $\lambda=\lambda^w$ and $\mu=\lambda^v$. 
	Since $\mu<\lambda$ there exists $k$ with $\hgt_k(\mu)<\hgt_k(\lambda)$.
	Actually, we can always find such a $k$ with $k<a$ or $k\geq b$. 
	Otherwise, we have $\hgt_{a-1}(\lambda)=\hgt_{a-1}(\mu)$ and $\hgt_{b}(\lambda)=\hgt_{b}(\mu)$. Then, since $\lambda^w_a=\ldots=\lambda^w_j=\dpath$ and $\lambda^w_{j+1}=\ldots=\lambda^w_b=\upath$ we also have
	$\hgt_k(\mu)=\hgt_k(\lambda)$ for all $k\in [a,b-1]$.

	Let now $k$ with $k<a$ or $k\geq b$ be such that 
	$\hgt_k(\mu)<\hgt_k(\lambda)$.
	 Since $\rex_k(w)>\rex_k(v)=\rex_k(xv)$ we have $xv\not\geq w$.

	ii) To any element $x\in W_{[a,b-1]}^{\hat{J}}$ we can associate a path $\lambda_{\hat{J}}^x\in \Lambda_{b-a,j-a}$.	
	We claim that the path $\lambda^{xw}\in \Lambda_{n,i}$ corresponding to $xw$ can be obtained by ``stacking'' the path $\lambda^x_{\hat{J}}$ on top of $\lambda^w$. In formulas, we have 
	\[\lambda^{xw}_k=\begin{cases}
	\lambda^w_k &\text{ if }k< a\text{ or }k\geq b-1\\
	(\lambda^w_{\hat{J}})_{k-a} & \text{ if }a\leq k\leq b.
	\end{cases}.\]
	If $x$ is a simple reflection (i.e. if $x=s_j$) the claim is clear. If $\ell(x)>1$ then we can write $x=s_kx'$ with $x'\in W^{\hat{J}}_{[a,b-1]}$ and $\ell(x')=\ell(x)-1$. By induction on the length we have $\ell(x'w)=\ell(x')+\ell(w)$, and $k$ is a valley for $\lambda^{x'w}$ since it is a valley for $\lambda_{\hat{J}}^{x'}$. We conclude that $\ell(s_kx'w)=\ell(x')+\ell(w)+1$.
\end{proof}

\begin{lemma}\label{crucial}
	
	Let $w\in W^I$ and let $a\leq j<b$ such that $\lambda^w_a=\ldots=\lambda
	^w_j=\dpath$ and $\lambda^w_{j+1}=\ldots=\lambda^w_b=\upath$.
	Let $\hat{J}=[a,b-1]\setminus \{j\}$ and let $x\in W_{[a,b-1]}^{\hat{J}}$. 
	
	Consider the $(R,R^I)$-bimodule $B:=B_x^{\hat{J}}\otimes_{R^{\hat{J}}}{}^{\hat{J}}B_w^I$ and assume $B_z^I(m_z)$ is a direct summand of $B$. 
	If $z\geq w$, then $z=xw$ and $m_z=0$.
\end{lemma}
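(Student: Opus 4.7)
The plan is to translate the claim to the Hecke algebra $\calH^I$ via Soergel's conjecture (Theorem~\ref{sconj}), compute $\cha(B)$ in closed form, and then analyze its parabolic Kazhdan--Lusztig expansion using Lemma~\ref{valley}.

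First, I would establish $\cha(B)=\undH_x\undH_{ww_I}$ in $\calH^I$. Each $k\in\hat{J}=[a,b-1]\setminus\{j\}$ lies on a slope of $\lambda^w$, so $s_kwW_I=wW_I$ and hence $\hat{J}\subseteq S_w$. Consequently the double coset $W_{\hat{J}}wW_I$ collapses to the single right coset $wW_I$, whose longest element is $ww_I$, and Theorem~\ref{sconj} gives $\cha({}^{\hat{J}}B_w^I)=\undH_{ww_I}$. Combining this with $\cha(B_x^{\hat{J}})=\undH_{xw_{\hat{J}}}=\undH_x\undH_{\hat{J}}$ and the multiplicativity of $\cha$ under $\otimes_{R^{\hat{J}}}$, together with the identity $\undH_{\hat{J}}\undH_{ww_I}=\pi_{\hat{J}}\undH_{ww_I}$ (as $\hat{J}$ lies in the left descent set of $ww_I$), one obtains $\cha(B)=\undH_x\undH_{ww_I}$.

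Expand $\undH_x\undH_{ww_I}=\sum_{y\in W^I}c_y(v)\undH_y^I$ in the $I$-parabolic KL basis. By Theorem~\ref{sconj}, the summand $B_z^I(m_z)$ appears in $B$ with multiplicity equal to the coefficient of $v^{-m_z}$ in $c_z(v)$. Lemma~\ref{valley}.ii) gives $\ell(xww_I)=\ell(x)+\ell(ww_I)$, so the standard-basis expansion of $\undH_x\undH_{ww_I}$ has unique leading term $\bfH_{xww_I}$ with coefficient $1$. This forces $c_{xw}(v)=1$ with no shift and, by a support argument bounding the length of standard-basis terms by $\ell(xww_I)$, also $c_y(v)=0$ for $y>xw$ in $W^I$.

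For $y\in W^I$ with $y\geq w$ and $y\neq xw$ (not already covered above), I must show $c_y(v)=0$. Iterating the rule $\bfH_s\bfH_u\in\{\bfH_{su},\bfH_{su}+(v^{-1}-v)\bfH_u\}$, the standard-basis support of $\undH_x\undH_{ww_I}$ consists of terms $\bfH_{x''v'}$ with $x''\in W_{[a,b-1]}$ (a subword of a reduced expression of $x$) and $v'\leq ww_I$. Factoring $v'=vu$ with $v\in W^I$, $v\leq w$, $u\in W_I$, a contribution to the coefficient of $\bfH_y^I$ requires that the $W^I$-part of $x''vu$ equal $y$ and its $W_I$-part equal $w_I$. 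Using the standard Bruhat-coset criterion ($z\geq ww_I$ iff $z=y'w_I$ with $y'\geq w$ in $W^I$) combined with Lemma~\ref{valley}.i), the hypothesis $y\geq w$ forces $v=w$, since $v<w$ would give $x''v\not\geq w$ and so the $W^I$-part of $x''vu$ could not dominate $w$. Hence any such $y$ must equal $x''w$ for some subword $x''\leq x$ (and moreover $x''\in W^{\hat{J}}_{[a,b-1]}$, since $y=x''w\in W^I$, via Lemma~\ref{valley}.ii)). The main obstacle, and likely the most delicate point, is to conclude $x''=x$: one must show that contributions across different $(x',v')$ pairs to $c_{x''w}$ for $x''<x$ cancel exactly. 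I expect this to follow either from bar-invariance together with positivity of the $c_y(v)$, or from an induction on $\ell(x)$ using Proposition~\ref{piusing} and the stabilization~\eqref{stabil}.
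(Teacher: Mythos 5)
Your strategy is essentially the same as the paper's (reduce to the Hecke algebra via Soergel's conjecture, expand $\cha(B)$ in standard and parabolic Kazhdan--Lusztig bases, invoke Lemma~\ref{valley}), but there are two problems.

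The more serious one is the gap you yourself flag at the end. The paper closes it with a specific mechanism that is worth spelling out: since $B$ is self-dual, $p_z(v)=p_z(v^{-1})$, so $p_z(v)\neq 0$ for $z\geq w$ forces the $I$-standard-basis expansion of $\cha(B)$ to contain the term $v^{-\deg p_z(v)}\bfH_z^I$, whose coefficient sits in \emph{nonpositive} degree. One then inspects the explicit expansion
\[
\undH_x^{\hat J}*_{\hat J}\undH_w^I=\bfH_x\bfH_w^I+\sum_{r<x}h_{r,x}^{\hat J}(v)\bfH_r\bfH_w^I+\sum_{t<w}h_{t,w}^I(v)\bfH_x\bfH_t^I+\sum_{r<x,\,t<w}h_{r,x}^{\hat J}(v)h_{t,w}^I(v)\bfH_r\bfH_t^I,
\]
and notes: the first sum has coefficients in $v\bbN[v]$ (positivity of parabolic KL polynomials) and, by Lemma~\ref{valley}(ii), $\bfH_r\bfH_w^I=\bfH_{rw}^I$ with no $v$-power, so no nonpositive degree arises there; the last two sums involve $\bfH_x\bfH_t^I$ or $\bfH_r\bfH_t^I$ with $t<w$, and by Lemma~\ref{valley}(i) these are supported at elements \emph{not} $\geq w$. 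Thus the only possible contribution to $v^{\leq 0}\bfH_z^I$ with $z\geq w$ is $\bfH_x\bfH_w^I=\bfH_{xw}^I$, forcing $z=xw$ and $\deg p_z=0$. Your sentence ``I expect this to follow either from bar-invariance together with positivity of the $c_y(v)$\ldots'' names the right ingredients, but the way you phrase the missing step --- that ``contributions across different pairs must cancel exactly'' --- suggests you are looking for cancellation inside the standard-basis expansion, which positivity actually \emph{rules out}. The point is not cancellation; it is that self-duality of $\cha(B)$ would demand a nonpositive-degree coefficient that the expansion simply cannot provide.

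The secondary issue is your intermediate identity $\cha(B_x^{\hat J})=\undH_{xw_{\hat J}}=\undH_x\undH_{\hat J}$. The first equality is the definition, but the second is false in general: $\undH_x\undH_{\hat J}=\undH_{xw_{\hat J}}+(\text{lower terms})$, and the lower terms are nonzero already for $x=s_2s_1s_3s_2=3412\in S_4$, $\hat J=\{s_1,s_3\}$ (where $\undH_x\undH_{s_1}=\undH_{xs_1}+\undH_{3214}+\undH_{3142}$). Consequently $\cha(B)\neq\undH_x\undH_{ww_I}$ in general. This is repairable --- by positivity one has $\cha(B)\leq\undH_x\undH_{ww_I}$ coefficientwise, and an upper bound would suffice for your strategy --- but as written the equality is not justified. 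The paper avoids this entirely by working directly with $\undH_x^{\hat J}*_{\hat J}\undH_w^I$ and expanding $\undH_x^{\hat J}$ in the $\hat J$-standard basis rather than expanding $\undH_x$ in the ordinary standard basis.
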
 
\begin{proof}
	By Soergel's conjecture (\Cref{sconj}), it is sufficient to prove the correspondent statement in the Hecke algebra. We can write
	\begin{equation}\label{Hdec}
	\cha(B)=\undH_x^{\hat{J}}*_{\hat{J}}{}^{\hat{J}}\undH^I_w=\sum_z p_z(v) \undH_z^I
	\end{equation}
	for some $p_z(v)\in \bbZ[v,v^{-1}]$. 
	Assume there exists $z\geq w$ such that $p_z(v)\neq 0$. Since $B$ is self-dual, we have $p_z(v)=p_z(v^{-1})$. So, if we write $\cha(B)$ in the $I$-standard basis of $\calH^I$, the term 
	\[\tau:=v^{-\deg p_z(v)}\bfH_z^I\]
	 must occur with a positive coefficient.

	Note that $k$ is on a slope of $\lambda^w$ for all $k\in {\hat{J}}$. It follows that for any $u\in W_{\hat{J}}$ we have $uwW_I=wW_I$, and so $W_{\hat{J}}wW_I=wW_I$. By 	\eqref{maxindouble}, this implies $\undH_w^I={}^{\hat{J}}\undH_w^I$ and, in particular, $\undH_w^I\in \undH_{\hat{J}}\calH$.
	We can write
	\[\undH_x^{\hat{J}}=\bfH_x^{\hat{J}}+\sum_{\substack{r\in W^{\hat{J}}\\r<x}}h^{\hat{J}}_{r,x}(v)\bfH_r^{\hat{J}}=
	\bfH_x\undH_{{\hat{J}}}+\sum_{\substack{r\in W^{\hat{J}}\\r<x}}h^{\hat{J}}_{r,x}(v)\bfH_r\undH_{{\hat{J}}}.\]
	Since $\undH_w^I\in \undH_{\hat{J}}\calH$ and $\undH_{\hat{J}}*_{\hat{J}} \undH_{\hat{J}}=\undH_{\hat{J}}$, we have
	\begin{multline}\label{Hdec2}
	\undH_x^{\hat{J}}*_{\hat{J}}\undH^I_w=\bfH_x\undH_w^I+\sum_{\substack{r\in W^{\hat{J}} \\ r<x}}h_{r,x}^{\hat{J}}(v)\bfH_r\undH_w^I=\\ =\bfH_x\bfH_w^I+\sum_{\substack{r\in W^{\hat{J}}\\r<x}}h_{r,x}^{\hat{J}}(v)\bfH_r\bfH_w^I+\sum_{\substack{t\in W^I\\t<w}}h_{t,w}^I(v)\bfH_x\bfH_t^I+\sum_{ \substack{r\in W^{\hat{J}},\;t\in W^I\\r<x,\;t<w}}h_{r,x}^{\hat{J}}(v)h_{t,w}^I(v)\bfH_r\bfH_t^I.
	\end{multline}
	
	Recall that for any $r<x$ we have $h^{\hat{J}}_{r,x}(v)\in v\bbN[v]$. Using \Cref{valley}(ii) we can see that $\bfH_r\bfH_w^I=\bfH_{rw}^I$ for every $r<x$. Hence the first sum in RHS of \eqref{Hdec2} does not contain the term $\tau$.

	By \Cref{valley}(i) if $t<w$ we have $rt\not\geq w$ for any $r\in W^{\hat{J}}_{[a,b-1]}$. It follows that the term $\tau$ cannot occur neither in the second or in the third sum of \eqref{Hdec2}. The only remaining possibility is $\tau=\bfH_z^I=\bfH_x\bfH_w^I$, hence $z=xw$ and $\deg p_z(v)=0$.
\end{proof}

\subsection{Dyck strips}
Dyck paths are a very classical combinatorial object studied, for example, for their connection to Catalan numbers. 
In this section we recall the main result from \cite{SZJ} which allows us to express Kazhdan--Lusztig polynomials in terms of Dyck paths.

\begin{definition}\label{Dyck}
A \emph{Dyck path} is a path (in the sense of \Cref{pathdef}) from $(x_0,y_0)$ to $(x_0+2l,y_0)$, with $x_0,y_0\in \bbZ$ and $l\in \bbZ_{\geq0}$, and such that it is contained in the half-plane $\{(x,y)\in\bbR^2 \mid y\leq y_0\}$.

To any Dyck path we associate a \emph{Dyck strip}: this is the set of boxes (in the sense of \Cref{boxdef}) given by squares with center in the integral coordinates of the Dyck path. If $D$ is a Dyck strip corresponding to a Dyck path going from $(x_0,y_0)$ to $(x_0+2l,y_0)$ we define its \emph{height} to be $\hgt(D):=y_0$ and its length to be $\ell(D):=2l+1$, i.e. the length of a Dyck strip is the number of its boxes.

Let $\lambda,\mu$ be paths with $\lambda\leq \mu$.
 We call \emph{Dyck partition} a partition of $\calA(\lambda,\mu)$ into Dyck strips.
Given a Dyck partition $\bfP$, we denote by $|\bfP|$ the number of Dyck strips in $\bfP$. We denote by $\hgt(\lambda,\mu)$ the maximal height of a Dyck strip in $\bfP$, for any Dyck partition of $\calA(\lambda,\mu)$.
\end{definition}

Our convention is to denote Dyck partitions by bold capital letters and Dyck strips by capital letters, so a typical Dyck partition would be denoted as $\bfP=\{D_1,\ldots,D_k\}$.

Given a box $B$ centered in $(x,y)$ we call the box centered in $(x,y+2)$ (resp. $(x,y-2)$, $(x+1,y+1)$, $(x+1,y-1)$, $(x-1,y-1)$, $(x-1,y+1)$) the box North (resp. South, NE, SE, SW, NW) of $B$.

\begin{figure}[H]
\begin{center}
\begin{tikzpicture}[x=.9cm,y=.9cm]
\draw[very thick] (2,0) to (3,1) to (4,0) to (3,-1) to (2,0);
\tikzset{vertex/.style={}}
\tikzset{edge/.style={}}
\tabpath{+,+,+,-,-,-}
\tabpath{-,-,-,+,+,+}
\tikzset{edge/.style={dotted}}
\tabpath{+,+,-,+}
\tabpath{+,-,-,+,+}
\tabpath{-,+,+,-,-}
\tabpath{-,-,+,-}
\node at (2,1) {NW};
\node at (3,2) {North};
\node at (4,1) {NE};
\node at (4,-1) {SE};
\node at (2,-1) {SW};
\node at (3,-2) {South};
\node at (3,0) {$\bullet$};
\node at (3,0.3) {$(x,y)$};
\node at (3,-0.5) {$B$};
\end{tikzpicture}
\end{center}
\end{figure}

There are two special type of Dyck partitions of $\calA(\lambda,\mu)$ which describe the $I$-parabolic (or Grassmannians) Kazhdan--Lusztig polynomials. 
\begin{definition}\label{TypeIandII}
	We say that a Dyck partition $\bfP$ is of type 1 or of type 2 if it satisfies one of the following two rules:
	\begin{description}
	\item[\textbf{Type 1:}] For any Dyck strip $D\in \bfP$ if there exists a a Dyck strip $D'\in \bfP$ containing a box North of a box in $D$, then every box North of a box in $D$ lies in $D'$.
	\item[\textbf{Type 2:}] For any Dyck strip $D\in \bfP$, if there exists a Dyck strip $D'\in \bfP$ containing a box South, SW or SE of a box in $D$ then every box South, SW or SE of a box in $D$ belongs either to $D$ or $D'$.
	\end{description}
\end{definition}

\begin{figure}[h]
	\begin{center}
	\begin{tabular}{c c c}
		\begin{tikzpicture}[x=\boxmini,y=\boxmini]
		\tikzset{vertex/.style={}}
		\tikzset{edge/.style={very thick}}
		\tabpath{+,-,+,+,-,-,+,+,-,+,-,-}
		\tabpath{+,-,+,-,-,-,+,+,-,+,+}
		\tabpath{-,-,+,-,-,-,+,+,+,-,+,+}
		\tabpath{-,-,+,+,-,-,+,+,-,+,-}
		\tikzset{edge/.style={very thick,blue}}
		\tabpath{+,-,+,+,-,-,+,+,-,+,-,-}
		\tikzset{edge/.style={very thick,red}}
		\tabpath{-,-,+,-,-,-,+,+,+,-,+,+}
		\node[blue] at (12,2) {$\mu$};
		\node[red] at (12,-2) {$\lambda$};
		\end{tikzpicture}
	&\qquad &
		\begin{tikzpicture}[x=\boxmini,y=\boxmini]
		\tikzset{vertex/.style={}}
		\tikzset{edge/.style={very thick}}
		\tabpath{+,-,+,+,-,-,+,+,-,+,-,-}
		\tabpath{+,-,+,-,-,-,+,+,+,-,+,-}
		\tabpath{-,-,+,-,-,-,+,+,+,-,+,+}	
		\tikzset{edge/.style={very thick,blue}}
		\tabpath{+,-,+,+,-,-,+,+,-,+,-,-}
		\tikzset{edge/.style={very thick,red}}
		\tabpath{-,-,+,-,-,-,+,+,+,-,+,+}
		\node[blue] at (12,2) {$\mu$};
		\node[red] at (12,-2) {$\lambda$};
		\end{tikzpicture}
	\end{tabular}
	\end{center}
\caption{Two Dyck partitions of the region $\calA(\lambda,\mu)$. The partition on the left is of Type 1 while the partition on the right is of Type 2.}
\end{figure}

\begin{definition}
	Let $\lambda,\mu\in \Lambda_{n,i}$ with $\lambda\leq \mu$. For $x\in \{1,2\}$ we denote by $\Conf^x(\lambda,\mu)$ be the set of Dyck partitions of Type $x$ in $\calA(\lambda,\mu)$.
 We define the following polynomials
\[Q^{(x)}_{\lambda,\mu}(v)=\sum_{\bfP\in \Conf^x(\lambda,\mu)}v^{|\bfP|}.\]

\end{definition}

Recall from \Cref{chap1} the I-parabolic KL polynomials $h^I_{x,y}(v)$ and the inverse I-parabolic KL polynomials $g_{x,y}^I(v)$.
\begin{theorem}[{\cite[Theorem 5 and Corollary 2]{SZJ}}]\label{SZJ}

For any $x,y\in W$ with $x\leq y$ we have \[Q^{(1)}_{\lambda^{x},\lambda^{y}}(v)=h^I_{x,y}(v)\quad\text{ and }\quad Q^{(2)}_{\lambda^{x},\lambda^y}(v)=g^I_{x,y}(v).\]

Moreover, for $\lambda,\mu \in \Lambda_{n,i}$ with $\lambda\leq \mu$, the set $\Conf^2(\lambda,\mu)$ has at most one element, i.e. $g_{\lambda,\mu}^I(v)$ is a monomial.
\end{theorem}

We introduce some additional notation.
Let $\lambda,\mu\in \Lambda_{n,i}$ with $\lambda\leq \mu$ be such that the region $\calA(\lambda,\mu)$ consists of a single Dyck strip $D$. Then we say that $D$ \emph{can be added to} $\lambda$ and we write $\lambda + D:=\mu$. Vice versa, we say that $D$ \emph{can be removed from} $\mu$ and we write $\mu - D:=\lambda$.

\subsection{Small Resolutions of Schubert varieties}

A small resolution of singularities of a complex algebraic variety $X$ is a morphism $p:\tilde{X}\raw X$ where $\tilde{X}$ is smooth variety such that for every $r>0$ we have
\[\mathrm{codim}\{x\in X \mid \dim (p^{-1}(x))\geq r\}>2r.\]
For every small resolution we have that $H^\bullet(\tilde{X},\bbQ)\cong IH^{\bullet-\dim X}(X,\bbQ)$. 

Zelevinski{\u\i} \cite{Zel} showed that all Schubert varieties in a Grassmannian admit a small resolution. In our setting this means that any indecomposable Soergel bimodule $B_x^I$ is isomorphic to a generalized Bott--Samelson bimodule, i.e. there exists a reduced translation pair $(\vec{I},\vec{J})$ such that 
\begin{equation}\label{genBS}
B_x^I\cong BS(\vec{I},\vec{J}).
\end{equation}
 Following Zelevinski{\u\i}, we describe how to construct such a translation pair. 
 For $\lambda\in \Lambda_{n,i}$, we denote by $\Peaks(\lambda)=\{p\in [1,n]\mid (\lambda_p,\lambda_{p+1})=(\upath,\dpath)\}$ the set of peaks of $\lambda$.
Choose a peak $p\in \Peaks(\lambda)$. Let $a$ be the maximum index with the property that $a< p$ and $\lambda_a=\dpath$ and let $b$ be the minimum index with the property that $b>p$ and $\lambda_b=\upath$.
We obtain a new path $\lambda^p\in \Lambda_{n,i}$ by setting
\[\lambda^p_i=\begin{cases}
\lambda_{a+b-i}& \text{ for all }i\in [a+1,b-1],\\
\lambda_i & \text{ otherwise.}\end{cases}\]

\begin{figure}[h]
	\begin{center}
	\begin{tabular}{c c c}
		\begin{tikzpicture}[x=\boxmini,y=\boxmini]
		\tikzset{vertex/.style={}}
		\tikzset{edge/.style={very thick}}
		\tabpath{+,-,+,+,+,-,-,+,-}
		\tikzset{edge/.style={}}
		\tabpath{-,-,-,-,+,+,+,+,+}
		\tikzset{edge/.style={dotted}}
		\tabpath{-,+,+,-,+}
		\tabpath{-,+,-,+,+,-,-}
		\tabpath{-,-,+,+,-,+,+}
		\tabpath{-,-,-,+,+,-}
		\tabpath{-,-,+,-,-}
		\tabpath{-,+,+,+,-,+,-,-}
		\node at (5.7,3.5) {$p$};
		\node at (0,2) {$\lambda$};
		\end{tikzpicture}
	& \qquad & 
	\begin{tikzpicture}[x=\boxmini,y=\boxmini]
	\tikzset{vertex/.style={}}
	\tikzset{edge/.style={very thick}}
	\tabpath{+,-,-,-,+,+,+,+,-}
	\tikzset{edge/.style={}}
	\tabpath{-,-,-,-,+,+,+,+,+}
	\tikzset{edge/.style={dotted}}
	\tabpath{-,+,-,-,-}
	\tabpath{-,-,+,-,+,-}
	\tabpath{-,-,-,+,+,+,-}
	\tabpath{+,-,-,-,+,+,+,-}
	\tikzset{edge/.style={very thick, dotted}}
	\tabpath{+,-,+,+,+,-,-,+,-}
	\node at (4,-1) {$\bar{p}$};
	\node at (0,2) {$\lambda^p$};
	\end{tikzpicture}
	\end{tabular}
\end{center}
	\caption{Passing from $\lambda$ to $\lambda^p$. In this example we have $a=2$, $p=5$, $b=8$ and $\bar{p}=4$.}
\end{figure}

 Clearly $\lambda^p< \lambda$ and $\Peaks(\lambda^p)=\Peaks(\lambda)\setminus\{p\}$.
Given $x\in W^I$ such that $\lambda=\lambda^x$, let $S_\lambda:=\{s \in S \mid sx\leq x$ in $W/W_I\}$.
In other words, $S_\lambda$ is the set of simple reflections indexed by the peaks and slopes of $\lambda$.
 Notice that $S_{\lambda^p}=(S_{\lambda}\cup \{a,b-1\})\setminus \{\bar{p}\}$, where $\bar{p}:=a+(b-1)-p$.

\begin{definition}
An ordering $(p_1,p_2,\ldots,p_k)$ of the elements of $\Peaks(\lambda)$ is called \emph{neat} if 
\begin{itemize}
	\item the height of $p_1$ is less or equal than the height of the peaks adjacent to $p_1$ in $\lambda$,
	\item $(p_2,p_3,\ldots,p_k)$ is a neat ordering of $\Peaks(\lambda^{p_1})$. 
\end{itemize}
\end{definition}

In \cite{Zel}, Zelevinski{\u\i}  associated to each neat ordering of $\Peaks(\lambda)$ a small resolution of the Schubert variety $X_\lambda$. The resolution is constructed by induction on the number of peaks. Let $(p_1,\ldots,p_k)$ be a neat ordering of $\Peaks(\lambda)$ and let $p=p_1$. Assume we have already constructed a small resolution $\widehat{X}_{\lambda^{p}}$ of $X_{\lambda^{p}}$. Let $a,b$ and $\bar{p}$ be as above. Then, multiplication induces a small resolution of singularities
\[\widehat{X}_\lambda:=L(a+1,b-2)\times_{P(\bar{p})} \widehat{X}_{\lambda^{p_1}}\raw X_\lambda,\]
where $L(a+1,b-2)$ is the Levi subgroup corresponding to the subset $[a+1,b-2]$ and $P(\bar{p})\subset L(a+1,b-2)$ is the maximal parabolic subgroup associated to the subset $[a+1,b-2]\setminus \{\bar{p}\}$ (see \cite[\S 5]{Per} for more details on this construction).

 Recall from \eqref{stabil} that we have $B_\lambda^I=R\otimes_{R^{S_\lambda}}{}^{S_\lambda}B_\lambda^I$. % We denote by $x^p$ the element $x^p\in W^I$ such that $\lambda^{x^p}=\lambda^p$.
 Zelevinski{\u\i}'s result (\cite[Theorem 1]{Zel}) can be translated in the language of Soergel bimodules as follows.
\begin{theorem}\label{small}
Let $\lambda\in \Lambda_{n,i}$ and let $(p_1,\ldots,p_k)$ be a neat ordering of $\Peaks(\lambda)$. Fix $p=p_1$ and let $a,b$ and $\bar{p}$ be defined as above. Let $\tilde{S}:=S_{\lambda^p}\setminus \{a,b-1\}=S_\lambda\setminus\{\bar{p}\}$.
Then we have:
\[{}^{S_\lambda}B_\lambda^I\cong \prescript{}{S_\lambda}{\left(R^{\tilde{S}}\otimes_{R^{S_{\lambda^p}}}{}^{S_{\lambda^p}}B_{\lambda^p}^I\right)}\left(\ell(S_\lambda))-\ell(\tilde{S})\right).
\]
\end{theorem}

In terms of $S$-diagrams we can obtain ${}^{S_\lambda}B_\lambda^I$ from ${}^{S_{\lambda^p}}B_{\lambda^p}^I$ as follows:
\begin{center}
 \begin{tikzpicture}[scale=1]
\draw[thick,red,triangle 45-] (-1,0) to (-1,2);
\draw[thick,blue,-triangle 45] (0,0) to (0,2);
\draw[thick,green,-triangle 45] (1,0) to (1,2);
\draw (5,0) to (2,0) to (2,2) to (5,2);
\node at (3.5,1) {${}^{S_{\lambda^p}}B^I_{\lambda^p}$};
\node at (-2.5,1) {${}^{S_\lambda}B^I_\lambda=$};
\node[red] at (-0.7,0.2) {$\bar{p}$};
\node[blue] at (0.3,0.2) {$a$};
\node[green] at (1.5,0.2) {$b-1$};
\end{tikzpicture}
\end{center}
where the arrow labeled by $a$ (resp. by $b-1$) is neglected if $a\leq 0$ (resp. $b\geq n+1$).

In general, there are several possible neat orders for a path $\lambda$, thus the algorithm of \Cref{small} gives rise to several translation pairs, any of which leads to a generalized Bott--Samelson bimodule isomorphic to $B_\lambda^I$. By Soergel's Hom formula (\Cref{SHF}) we have $\End^0(B_\lambda^I)\cong \bbQ$. Hence, we can identify these bimodules via the unique isomorphism sending $1^\otimes:=1\otimes 1\otimes \ldots 1$ to $1^\otimes$.

\begin{example}
	Consider the path $\lambda=(\upath,\dpath,\upath,\dpath)\in \Lambda_{4,2}$. 	
	\begin{center}
	\begin{tikzpicture}[x=\boxmini,y=\boxmini]
	\tikzset{vertex/.style={}}
	\tikzset{edge/.style={very thick}}
	\tabpath{+,-,+,-}
	\tikzset{edge/.style={}}
	\tabpath{-,-,+,+}
	\tikzset{edge/.style={dotted}}
	\tabpath{-,+,-,+}
	\node at (4,1) {$\lambda$};
	\node at (1,0) {$1$};
	\node at (2,-1) {$2$};
	\node at (3,0) {$3$};
	\end{tikzpicture}
	\end{center}
Then $(1,3)$ and $(3,1)$ are both neat ordering of $\Peaks(\lambda)$. They give rise to the two following different diagrammatic description of the bimodule $B_\lambda^I$:
\begin{center}
	\begin{tikzpicture}[scale=1]
	\draw[thick,green,-triangle 45] (-1,0) to (-1,2);
	\draw[thick,blue,-triangle 45] (0,0) to (0,2);
	\draw[thick,blue,triangle 45-] (1,0) to (1,2);
	\draw[thick,red,-triangle 45] (2,0) to (2,2);
	\draw[thick,red,triangle 45-] (3,0) to (3,2);
	\draw[thick,blue,-triangle 45] (4,0) to (4,2);
	\node[green] at (-0.7,0.2) {$3$};
	\node[blue] at (0.3,0.2) {$1$};
	\node[blue] at (1.3,0.2) {$1$};
	\node[red] at (2.3,0.2) {$2$};
	\node[red] at (3.3,0.2) {$2$};
	\node[blue] at (4.3,0.2) {$1$};
	\end{tikzpicture} 
	\begin{tikzpicture}[scale=1]
	\draw[thick,green,-triangle 45] (-1,0) to (-1,2);
	\draw[thick,blue,-triangle 45] (0,0) to (0,2);
	\draw[thick,green,triangle 45-] (1,0) to (1,2);
	\draw[thick,red,-triangle 45] (2,0) to (2,2);
	\draw[thick,red,triangle 45-] (3,0) to (3,2);
	\draw[thick,green,-triangle 45] (4,0) to (4,2);
	\node[green] at (-0.7,0.2) {$3$};
	\node[blue] at (0.3,0.2) {$1$};
	\node[green] at (1.3,0.2) {$3$};
	\node[red] at (2.3,0.2) {$2$};
	\node[red] at (3.3,0.2) {$2$};
	\node[green] at (4.3,0.2) {$3$};
	\node at (-2,1) {$\cong$};
	\end{tikzpicture}
\end{center}
In terms of bimodules, this corresponds to the following isomorphisms of generalized Bott--Samelson.
\[ B_\lambda^I\cong R\otimes_{R^{1,3}} R^3\otimes_{R^{2,3}}R^3\otimes_{R^{1,3}}R^{1,3}(3)\cong R\otimes_{R^{1,3}} R^1\otimes_{R^{1,2}}R^1\otimes_{R^{1,3}}R^{1,3}(3). \]
\end{example}

\subsection{Morphisms of degree one}\label{diagramsec}
From \Cref{SZJ} and Soergel's Hom formula \cite[Theorem 7.4.1]{W4} we obtain that
\[\Hom^1(B_\lambda^I,B_\mu^I)=\begin{cases}\bbQ & \text{if $\lambda=\mu\pm D$ for some Dyck strip $D$,}\\
0 & \text{otherwise.}\end{cases}\]
This means that to any Dyck strip we can associate a degree $1$ morphisms of bimodules that is unique up to a scalar.

Assume that $\lambda,\mu$ are two paths such that $\lambda=\mu +D$ for some Dyck strip $D$. Most of the peaks of $\lambda$ are also peaks for $\mu$. 
Let us be more precise: 
there are exactly two integers $a$ and $b+1$ with $a<b+1$ such that $\lambda_a\neq \mu_a$ and $\lambda_{b+1}\neq \mu_{b+1}$. (In fact, we have $\lambda_a\in \upath$, $\mu_a=\dpath$ and $\lambda_{b+1}=\dpath$, $\mu_{b+1}=\upath$.) We have $a,b\in \Peaks(\lambda)\setminus\Peaks(\mu)$. On the other hand we have $(a-1)\not\in \Peaks(\lambda)$ and \[(a-1)\in \Peaks(\mu) \iff \lambda_{a-1}=\mu_{a-1}=\upath.\]
Similarly, $(b+1)\not \in \Peaks(\lambda)$ and $(b+1)\in \Peaks(\mu)\iff \lambda_{b+2}=\mu_{b+2}=\dpath$.

Since $D$ is a Dyck strip, we have $\hgt_a(\lambda)=\hgt_b(\lambda)$ and all the peaks of $\lambda$ between $a$ and $b$ are of height smaller or equal than $\hgt_a(\lambda)$.
This means that we can find a neat ordering of $\Peaks(\lambda)$ of the form 
\begin{equation}\label{superneat}
(p_1,p_2,\ldots,p_m,q_1,q_2,\ldots q_{m'},a,b,r_1,r_2,\ldots,r_{m''})
\end{equation}
satisfying the following three rules:
\begin{itemize}
	\item for every peak $p\in \Peaks(\lambda)$ such that $p\not \in [a,b]$ and $\hgt_p(\lambda)<\hgt(D)$ we have $p=p_i$ for some $i\in [1,m]$,
	\item for every peak $p\in \Peaks(\lambda)$ such that $p\in [a,b]$ we have $p=q_i$ for some $i\in [1,m']$,
	\item for every peak $p\in \Peaks(\lambda)$ such that $p\not \in [a,b]$ and $\hgt_p(\lambda)\geq \hgt(D)$ we have $p=r_i$ for some $i\in [1,m'']$,
\end{itemize}

 Using \Cref{small}, from the neat order \eqref{superneat} we can construct a diagram representing the object $B_\lambda^I$ of the form
\begin{center}
 \begin{tikzpicture}[scale=1]
\draw[thick,blue,-triangle 45] (0,0) to (0,2);
\draw[thick,green,-triangle 45] (0.5,0) to (0.5,2);
\draw[thick,red,-triangle 45] (1.5,0) to (1.5,2);
\node at (1,1) {$\ldots$};
\draw (2,0) rectangle (4.5,2);
\node at (3.25,1) {$\frP$};
\draw (5,0) rectangle (7.5,2);
\node at (6.25,1) {$\frQ$};
\draw[thick,blue,triangle 45-] (8,0) to (8,2);
\draw[thick,green,-triangle 45] (8.5,0) to (8.5,2);
\draw[thick,red,-triangle 45] (9,0) to (9,2);
\draw[thick,blue,triangle 45-] (9.5,0) to (9.5,2);
\draw[thick,green,-triangle 45] (10,0) to (10,2);
\draw[thick,red,-triangle 45] (10.5,0) to (10.5,2);
\draw (11,0) rectangle (13.5,2);
\node at (12.25,1) {$\frR$};
\node[gray] at (14,1) {\scriptsize $I$};
\node[gray] at (-0.5,1) {\scriptsize $\emptyset$};
\draw [decorate,decoration={brace,amplitude=8pt,mirror},yshift=0pt]
(0,-0.1) -- (1.5,-0.1) node [black,midway,yshift=-16pt] {\footnotesize $S_\lambda$};
\draw [decorate,decoration={brace,amplitude=8pt,mirror},yshift=0pt]
(8,-0.1) -- (9,-0.1) node [black,midway,yshift=-16pt] {\footnotesize $a$};
\draw [decorate,decoration={brace,amplitude=8pt,mirror},yshift=0pt]
(9.5,-0.1) -- (10.5,-0.1) node [black,midway,yshift=-16pt] {\footnotesize $b$};
\end{tikzpicture}
\end{center}
where the boxes labeled by $\frP$, $\frQ$ and $\frR$ correspond respectively to the peaks $p_i$'s, $q_i$'s and $r_i$'s.

Similarly, we obtain a neat ordering $(p_1,\ldots,p_m,q_{1},\ldots q_{m'},\widehat{a-1},\widehat{b+1},r_1,\ldots,r_{m''})$ of $\Peaks(\mu)$ (here the notation $\widehat{j}$ means that $j$ is neglected if $j\not \in \Peaks(\mu)$) and we obtain a diagrammatic presentation of $B_\mu^I$ as follows.
\begin{center}
	\begin{tikzpicture}[scale=1]
	\draw[thick,blue,-triangle 45] (0,0) to (0,2);
	\draw[thick,green,-triangle 45] (0.5,0) to (0.5,2);
	\draw[thick,red,-triangle 45] (1.5,0) to (1.5,2);
	\node at (1,1) {$\ldots$};
	\draw (2,0) rectangle (4.5,2);
	\node at (3.25,1) {$\frP'$};
	\draw (5,0) rectangle (7.5,2);
	\node at (6.25,1) {$\frQ'$};
	\draw[thick,blue,triangle 45-] (8,0) to (8,2);
	\draw[thick,green,-triangle 45] (8.5,0) to (8.5,2);
	\draw[thick,red,-triangle 45] (9,0) to (9,2);
	\draw[thick,blue,triangle 45-] (9.5,0) to (9.5,2);
	\draw[thick,green,-triangle 45] (10,0) to (10,2);
	\draw[thick,red,-triangle 45] (10.5,0) to (10.5,2);
	\draw (11,0) rectangle (13.5,2);
	\node at (12.25,1) {$\frR$};
	\node[gray] at (14,1) {\scriptsize $I$};
	\node[gray] at (-0.5,1) {\scriptsize $\emptyset$};
	\draw [decorate,decoration={brace,amplitude=8pt,mirror},yshift=0pt]
	(0,-0.1) -- (1.5,-0.1) node [black,midway,yshift=-16pt] {\footnotesize $S_\mu$};
	\draw [decorate,decoration={brace,amplitude=8pt,mirror},yshift=0pt]
	(8,-0.1) -- (9,-0.1) node [black,midway,yshift=-16pt] {\footnotesize $\widehat{a-1}$};
	\draw [decorate,decoration={brace,amplitude=8pt,mirror},yshift=0pt]
	(9.5,-0.1) -- (10.5,-0.1) node [black,midway,yshift=-16pt] {\footnotesize $\widehat{b+1}$};
	\end{tikzpicture}
\end{center}

The singular $S$-diagrams in $\frP$ and $\frP'$ coincide, except that they may have regions with different labels. The same holds for $\frQ$ and $\frQ'$.

Let $\delta=(S_\lambda \cup S_\mu)\setminus (S_\lambda\cap S_\mu)$ be the symmetric difference of $S_\lambda$ and $S_\mu$. Then $\delta\cug \{a-1,a,b,b+1\}$. Any $j\in \delta$ is distinct from the label of any arrow appearing in $\frP$ and $\frQ$, and it is distant from any label of a downward arrow in $\frP$ and $\frQ$. Hence, up to an isomorphism which sends $1^\otimes$ to $1^\otimes$, we can draw $B_\lambda^I$ as follows.
\begin{center}
	\begin{tikzpicture}[scale=0.88]
	\draw[thick,blue,-triangle 45] (0,0) to (0,2);
	\draw[thick,green,-triangle 45] (0.5,0) to (0.5,2);
	\draw[thick,red,-triangle 45] (1.5,0) to (1.5,2);
	\node at (1,1) {$\ldots$};
	\draw (2,0) rectangle (4.5,2);
	\node at (3.25,1) {$\frP''$};
	\draw (5,0) rectangle (7.5,2);
	\node at (6.25,1) {$\frQ''$};
	\draw[thick,blue,-triangle 45] (8,0) to (8,2);
	\draw[thick,green,-triangle 45] (8.5,0) to (8.5,2);
	\draw[thick,red,-triangle 45] (9.5,0) to (9.5,2);
	\node at (9,1) {$\ldots$};
	\draw[thick,blue,triangle 45-] (10,0) to (10,2);
	\draw[thick,green,-triangle 45] (10.5,0) to (10.5,2);
	\draw[thick,red,-triangle 45] (11,0) to (11,2);
	\draw[thick,blue,triangle 45-] (11.5,0) to (11.5,2);
	\draw[thick,green,-triangle 45] (12,0) to (12,2);
	\draw[thick,red,-triangle 45] (12.5,0) to (12.5,2);
	\draw (13,0) rectangle (15.5,2);
	\node at (14.25,1) {$\frR$};
	\node[gray] at (16,1) {\scriptsize $I$};
	\node[gray] at (-0.5,1) {\scriptsize $\emptyset$};
	\draw [decorate,decoration={brace,amplitude=8pt,mirror},yshift=0pt]
	(0,-0.1) -- (1.5,-0.1) node [black,midway,yshift=-16pt] {\footnotesize $S_\lambda\cap S_\mu$};
	\draw [decorate,decoration={brace,amplitude=8pt,mirror},yshift=0pt]
	(8,-0.1) -- (9.5,-0.1) node [black,midway,yshift=-16pt] {\footnotesize $S_\lambda \setminus S_\mu$};
	\draw [decorate,decoration={brace,amplitude=8pt,mirror},yshift=0pt]
	(10,-0.1) -- (11,-0.1) node [black,midway,yshift=-16pt] {\footnotesize $a$};
	\draw [decorate,decoration={brace,amplitude=8pt,mirror},yshift=0pt]
	(11.5,-0.1) -- (12.5,-0.1) node [black,midway,yshift=-16pt] {\footnotesize $b$};
	\end{tikzpicture}
\end{center}

The isomorphism between these two diagrammatic presentations of $B_\lambda^I$ is simply given by crossing all the arrows in $S_\lambda\setminus S_\mu$ over $\frP$ and $\frQ$.
The box $\frP''$ is equal to $\frP$ up to a relabeling of the regions.

We can similarly do the same for $\mu$ and obtain the following diagram representing $B_\mu^I$:

\begin{center}
	\begin{tikzpicture}[scale=0.88]
	\draw[thick,blue,-triangle 45] (0,0) to (0,2);
	\draw[thick,green,-triangle 45] (0.5,0) to (0.5,2);
	\draw[thick,red,-triangle 45] (1.5,0) to (1.5,2);
	\node at (1,1) {$\ldots$};
	\draw (2,0) rectangle (4.5,2);
	\node at (3.25,1) {$\frP''$};
	\draw (5,0) rectangle (7.5,2);
	\node at (6.25,1) {$\frQ''$};
	\draw[thick,blue,-triangle 45] (8,0) to (8,2);
	\draw[thick,green,-triangle 45] (8.5,0) to (8.5,2);
	\draw[thick,red,-triangle 45] (9.5,0) to (9.5,2);
	\node at (9,1) {$\ldots$};
	\draw[thick,blue,triangle 45-] (10,0) to (10,2);
	\draw[thick,green,-triangle 45] (10.5,0) to (10.5,2);
	\draw[thick,red,-triangle 45] (11,0) to (11,2);
	\draw[thick,blue,triangle 45-] (11.5,0) to (11.5,2);
	\draw[thick,green,-triangle 45] (12,0) to (12,2);
	\draw[thick,red,-triangle 45] (12.5,0) to (12.5,2);
	\draw (13,0) rectangle (15.5,2);
	\node at (14.25,1) {$\frR$};
	\node[gray] at (16,1) {\scriptsize $I$};
	\node[gray] at (-0.5,1) {\scriptsize $\emptyset$};
	\draw [decorate,decoration={brace,amplitude=8pt,mirror},yshift=0pt]
	(0,-0.1) -- (1.5,-0.1) node [black,midway,yshift=-16pt] {\footnotesize $S_\lambda\cap S_\mu$};
	\draw [decorate,decoration={brace,amplitude=8pt,mirror},yshift=0pt]
	(8,-0.1) -- (9.5,-0.1) node [black,midway,yshift=-16pt] {\footnotesize $S_\mu \setminus S_\lambda$};
	\draw [decorate,decoration={brace,amplitude=8pt,mirror},yshift=0pt]
	(10,-0.1) -- (11,-0.1) node [black,midway,yshift=-16pt] {\footnotesize $\widehat{a-1}$};
	\draw [decorate,decoration={brace,amplitude=8pt,mirror},yshift=0pt]
	(11.5,-0.1) -- (12.5,-0.1) node [black,midway,yshift=-16pt] {\footnotesize $\widehat{b+1}$};
	\end{tikzpicture}
\end{center}

 Now, to specify a morphism between $B_\lambda^I$ and $B_\mu^I$ it is enough to do it ``locally'' via some diagram 
 of the form:
 \begin{equation}\label{canIdothis}
 	\begin{tikzpicture}[scale=0.88, baseline=(current bounding box.center)]
 	\draw[thick,blue,-triangle 45] (8,0) to (8,1);
 	\draw[thick,green,-triangle 45] (8.5,0) to (8.5,1);
 	\draw[thick,red,-triangle 45] (9.5,0) to (9.5,1);
 	\node at (9,0.5) {$\ldots$};
 	\draw[thick,blue,triangle 45-] (10,0) to (10,1);
 	\draw[thick,green,-triangle 45] (10.5,0) to (10.5,1);
 	\draw[thick,red,-triangle 45] (11,0) to (11,1);
 	\draw[thick,blue,triangle 45-] (11.5,0) to (11.5,1);
 	\draw[thick,green,-triangle 45] (12,0) to (12,1);
 	\draw[thick,red,-triangle 45] (12.5,0) to (12.5,1);
 	\draw [decorate,decoration={brace,amplitude=8pt},yshift=0pt]
 	(8,1.1) -- (9.5,1.1) node [black,midway,yshift=16pt] {\footnotesize $S_\lambda \setminus S_\mu$};
 	\draw [decorate,decoration={brace,amplitude=8pt},yshift=0pt]
 	(10,1.1) -- (11,1.1) node [black,midway,yshift=16pt] {\footnotesize $a$};
 	\draw [decorate,decoration={brace,amplitude=8pt},yshift=0pt]
 	(11.5,1.1) -- (12.5,1.1) node [black,midway,yshift=16pt] {\footnotesize $b$};
 		\draw (7.5,0) rectangle (13,-1.5);
 	\node at (10.25,-.75) {$f_D$};
 	
 	 	\draw[thick,blue,-triangle 45] (8,-2.5) to (8,-1.5);
 	\draw[thick,green,-triangle 45] (8.5,-2.5) to (8.5,-1.5);
 	\draw[thick,red,-triangle 45] (9.5,-2.5) to (9.5,-1.5);
 	\node at (9,-2) {$\ldots$};
 	\draw[thick,blue,triangle 45-] (10,-2.5) to (10,-1.5);
 	\draw[thick,green,-triangle 45] (10.5,-2.5) to (10.5,-1.5);
 	\draw[thick,red,-triangle 45] (11,-2.5) to (11,-1.5);
 	\draw[thick,blue,triangle 45-] (11.5,-2.50) to (11.5,-1.5);
 	\draw[thick,green,-triangle 45] (12,-2.5) to (12,-1.5);
 	\draw[thick,red,-triangle 45] (12.5,-2.5) to (12.5,-1.5);
 	\draw [decorate,decoration={brace,amplitude=8pt,mirror},yshift=0pt]
 	(8,-2.6) -- (9.5,-2.6) node [black,midway,yshift=-16pt] {\footnotesize $S_\mu \setminus S_\lambda$};
 	\draw [decorate,decoration={brace,amplitude=8pt,mirror},yshift=0pt]
 	(10,-2.6) -- (11,-2.6) node [black,midway,yshift=-16pt] {\footnotesize $\widehat{a-1}$};
 	\draw [decorate,decoration={brace,amplitude=8pt,mirror},yshift=0pt]
 	(11.5,-2.6) -- (12.5,-2.6) node [black,midway,yshift=-16pt] {\footnotesize $\widehat{b+1}$};
 	\end{tikzpicture}
 \end{equation}

Following \cite[Definition 4.4]{Per}, we define a partial order $\lhd$ on the set of boxes in $\calA(\lambda)$. The order $\lhd$ is generated by the relation $\theta\vartriangleleft \theta'$ if $\theta$ is SW or SE of $\theta$.
If $p\in \Peaks(\lambda)$ we denote by $\theta(p)$ the unique box containing $p$. Clearly $\theta(p)$ is a maximal element with respect to $\lhd$. Define:
\[U:=\left\{\theta \text{ box in }\calA(\lambda)\;\middle|
\begin{tabular}{c}
$\exists q\in\{q_1,\ldots, q_{m'},a,b-1\}\text{ such that }\theta\vartriangleleft \theta(q)$\\
and $\theta\not\vartriangleleft \theta(r_i)\text{ for any }i\in [1,m'']$
\end{tabular}
\right\}.\]

Let us call $u_s$, $u_e$ and $u_w$ respectively the labels of the southernmost, easternmost and westernmost boxes of $U$.
\begin{center}
\begin{tikzpicture}[x=\boxmini,y=\boxmini]
\tikzset{vertex/.style={}}
\tikzset{edge/.style={very thick}}
\draw[fill=yellow] (1,-1) -- (4,2) -- (6,0) -- (7,1) -- (10,-2) -- (12,0) -- (13,-1) -- (7,-7) -- cycle;
\draw[fill=orange] (4,2) -- (5,3) -- (6,2) -- (7,3) -- (10,0) -- (12,2) -- (13,1) -- (15,3) -- (16,2) -- (13,-1) -- (12,0) -- (10,-2) -- (7,1) -- (6,0) -- (4,2) -- cycle;
\tabpathc{+,-,+,+,+,-,+,-,-,-,+,+,-,+,+,-,+,+,+,-,-,-,+,-}{blue}
\begin{scope}[shift={(0,-0.1)}]
\tabpathc{+,-,+,+,-,-,+,-,-,-,+,+,-,+,+,+,+,+,+,-,-,-,+,-}{red}
\end{scope}
\tabpath{-,-,-,-,-,-,-,-,-,-,-,+,+,+,+,+,+,+,+,+,+,+,+,+}
\node at (5,3.5) {$a$};
\node at (7,3.5) {$q_2$};
\node at (12,2.5) {$q_1$};
\node at (15,3.5) {$b$};
\node at (1,1.5) {$p_1$};
\node at (19,5.5) {$r_1$};
\node at (5,3.5) {$a$};
\node at (23,3.5) {$r_2$};
\node at (7,-3) {$U$};
\node at (9,0) {$D$};
\node[red] at (15,-0.5) {$\mu$};
\node[blue] at (9,2) {$\lambda$};
\node at (7,-6) {$u_s$};
\node at (2,-1) {$u_w$};
\node at (15,2) {$u_e$};
\end{tikzpicture}
\end{center}

\setlength{\tabcolsep}{20pt}

We proceed to draw the morphisms $f_D$ as in \eqref{canIdothis} explicitly. To do this, we need to divide into 8 cases labeled from $1a)$ to $2d)$, which depend on the precise form of $D$ and $U$.
All the crossing involving a black arrow are isomorphism of degree $0$. The black arrows are meant to be neglected whenever their label is not an element of $\{1,\ldots,n\}$.

\begin{enumerate}[{\bfseries Case }\bfseries 1:]
	\item We have $a=b$, i.e. $D$ consists of a single box.
		\begin{enumerate}[{Case 1}a)]
			\item $a-1$ and $b+1$ are both peaks for $\mu$.
			
 \realign\begin{minipage}{0.35\textwidth}\begin{center}
 \begin{tikzpicture}[x=\boxmini,y=\boxmini]
 \tikzset{vertex/.style={}}
\draw[fill=yellow] (0,0) -- (2,2) -- (3,1) -- (4,2) -- (7,-1) -- (4,-4) -- cycle;
\draw[fill=orange] (2,2) -- (3,3) -- (4,2) -- (3,1) -- cycle;
\tikzset{edge/.style={very thick}}
\tabpath{+,+,+,-,-,-,-}
\tabpath{+,+,-,+,-,-,-}
\tabpath{-,-,-,-,+,+,+}
\node at (3,2) {$a$};
\node at (1,0) {$u_w$};
\node at (4,-3) {$u_s$};
\node at (6,-1) {$u_e$};
\end{tikzpicture}\end{center}\end{minipage}\quad\begin{minipage}{0.55\textwidth}\begin{center}\begin{tikzpicture}
\draw[-triangle 45, thick, blue] (3,0) to (3,0.5) ;
\draw[-triangle 45 reversed, thick, blue] (-1,0) to (-1,0.5) ;
\draw[thick, blue] (3,0.5) to [out=90,in=0] (1,2) to [out=180,in=90] (-1,0.5) ;
\draw[triangle 45-, thick, red] (2,0) to [out=90,in=-90] (1,3);
\draw[-triangle 45, thick, green] (0,0) to (0,3);
\draw[thick,-triangle 45] (1,0) to [out=90,in=-90] (2,3);
\draw[-triangle 45, thick] (4.5,0) to (4.5,3);
\node at (5.2,2.2) {$u_e+1$};
\node at (2.4,2.2) {$u_w\hspace{-3pt}-\hspace{-3pt}1$};
\node[blue] at (3.2,0.5) {$u_w$};
\node[red] at (2.2,0.5) {$u_s$};
\node[green] at (0.2,0.5) {$a$};
\node at (6.5,1.5) {$\ldots$};
\node at (-1.5,1.5) {$\ldots$};
\end{tikzpicture}\end{center}\end{minipage}
\item $a-1\in \Peaks(\mu)$ and $b+1\not \in \Peaks(\mu)$. In this case $u_w=u_s$ and $u_e=a$

 \realign\begin{minipage}{0.35\textwidth}\begin{center}
		\begin{tikzpicture}[x=\boxmini,y=\boxmini]
 \draw[fill=yellow] (0,0) -- (2,2) -- (3,1) -- (1,-1) -- cycle;
 \draw[fill=orange] (2,2) -- (3,3) -- (4,2) -- (3,1) -- cycle;
 \tikzset{vertex/.style={}}
\tikzset{edge/.style={very thick}}
\tabpath{+,+,+,-}
\tabpath{+,+,-}
\tabpath{-,+,+,+}
\node at (3,2) {$a$};
\node at (1,0) {$u_s$};
\end{tikzpicture}\end{center}\end{minipage}\quad\begin{minipage}{0.55\textwidth}\begin{center}\begin{tikzpicture}
\draw[triangle 45-, thick, red] (0,0) to [out=90, in=-135] (1,1.5) to [out=45, in=-90] (2,3);
\draw[-triangle 45, thick, green] (2,0) to [out=90, in=-45] (1,1.5) to [out=135, in=-90] (0,3);
\draw[-triangle 45, thick] (4,0) to (4,3);

\node at (-1.5,0.5) {$a+1$};
\node at (4.7,2.2) {$u_s-1$};
\node[red] at (0.4,0.5) {$u_s$};
\node[green] at (2.2,0.5) {$a$};
\draw[thick,-triangle 45] (-1,0) to [out=90,in=-90] (3,1.5) to [out=90,in=-90] (3,3);
\node at (5.5,1.5) {$\ldots$};
\node at (-2.5,1.5) {$\ldots$};
\end{tikzpicture}\end{center}\end{minipage}

\item $a-1\not\in \Peaks(\mu)$ and $b+1\in \Peaks(\mu)$. This is completely analogous to case 1b).

\item $a-1,b+1\not\in \Peaks(\mu)$. In this case $U=D$ is a single box labeled by $a$.

 \realign\begin{minipage}{0.35\textwidth}\begin{center}
		\begin{tikzpicture}[x=\boxmini,y=\boxmini]
 \draw[fill=orange] (0,0) -- (1,1) -- (2,0) -- (1,-1) -- cycle;
 \tikzset{vertex/.style={}}
\tikzset{edge/.style={very thick}}
\tabpath{+,-}
\tabpath{-,+}
\node at (1,0) {$a$};
\end{tikzpicture}\end{center}\end{minipage}\quad\begin{minipage}{0.55\textwidth}\begin{center}\begin{tikzpicture}
\draw[-triangle 45, thick, green] (2,3) to [out=-90, in=0] (1,1.5) to [out=180, in=-90] (0,3);
\draw[-triangle 45, thick] (4.5,0) to (4.5,3);
\draw[-triangle 45, thick] (3,0) to (3,3);
\node at (5.1,2.2) {$a+1$};
\node at (3.6,2.2) {$a-1$};
\node[green] at (2.2,2.5) {$a$};
\node at (6.5,1.5) {$\ldots$};
\node at (-1.5,1.5) {$\ldots$};
\end{tikzpicture}\end{center}\end{minipage}
\end{enumerate}

\item We have $a<b$, i.e. the Dyck strip $D$ has length $\ell(D)>1$.

\begin{enumerate}[{Case 2}a)]

\item $a-1$ and $b+1$ are both peaks

 \realign\begin{minipage}{0.35\textwidth}
 	\begin{center}
			\begin{tikzpicture}[x=\boxmini,y=\boxmini]
				\draw[fill=yellow] (0,0) -- (3,3) -- (5,1) -- (6,2) -- (7,1) -- (9,3) -- (11,1) -- (5,-5) -- cycle;
				\draw[fill=orange] (3,3) -- (4,4) --(5,3) -- (6,4) -- (7,3) -- (8,4) -- (9,3) -- (7,1) -- (6,2) -- (5,1) -- cycle;
				\tikzset{vertex/.style={}}
				\tikzset{edge/.style={very thick}}
				\tabpath{+,+,+,+,-,+,-,+,-,-,-}
				\tabpath{+,+,+,-,-,+,-,+,+}
				\tabpath{-,-,-,-,-,+,+,+,+,+,+}
				\draw[dashed,gray] (3,-3) to (7,1);
				\draw[dashed,gray] (5,1) to (6,0);
				\node at (1,0) {$u_w$};
				\node at (4,3) {$a$};
				\node at (8,3) {$b$};
				\node at (5,-4) {$u_s$};
				\node at (10,1) {$u_e$};
				\node at (3,-2) {$v$};
				\node at (6,1) {$v'$};
			\end{tikzpicture}\end{center}\end{minipage}\quad\begin{minipage}{0.55\textwidth}\begin{center}
			\begin{tikzpicture}[scale=.98]
				\node (a) at (1.5,-2) {};
				\draw[-triangle 45, thick, red] (0,0) to (0,-1) ;
				\draw[triangle 45 reversed-, thick, red] (4,-1) to (4,0);
				\draw[-triangle 45 reversed, thick, green] (1,0) to (1,-1) ;
				\draw[triangle 45-, thick, blue] (3,-1) to (3,0);
				\draw[thick, red] (0,0) to (0,-1) to [out=-90,in=180] (2,-2.5) to [out=0,in=-90] (4,-1) to (4,0) ;
				\draw[thick, blue] (3,0) to (3,-4);
				\draw[thick, green] (1,0) to (1,-4);
				\draw[thick,green,triangle 45-] (1,-3) to (1,-4);
				\draw[thick,blue,triangle 45 reversed-] (3,-3) to (3,-4);
				\node[red] at (4.5,-0.5) {$v-1$};
				\node[blue] at (3.2,-0.5) {$u_s$};
				\node[green] at (1.2,-0.5) {$v'$};
				\node[violet] at (4.2,-3.5) {$v$};
				\draw[thick, violet, -triangle 45 reversed] (0,-4) to (0,-3);
				\draw[thick, violet, triangle 45-] (4,-3) to (4,-4) ;
				\draw[thick, violet] (0,-4) to (0,-3) to [out=90,in=180] (2,-1.5) to [out=0,in=90] (4,-3) to (4,-4) ;
				\draw[thick,-triangle 45] (2,-4) to [out=90,in=-90] (2,0);
				\draw[thick,-triangle 45] (5.3,-4) to (5.3,0);
				\node at (2.5,-3.5) {$u_w\hspace{-3pt}-\hspace{-3pt}1$};
				\node at (5.9,-3) {$u_e+1$};
				\node at (-1,-2) {$\ldots$};
				\node at (6.2,-2) {$\ldots$};
			\end{tikzpicture}\end{center}\end{minipage}
		
where $v=u_s-u_e+b$ and $v'=v-u_e+b+a-u_w$. 
\item\label{2b} $a-1\in \Peaks(\mu)$ and $b+1\not\in\Peaks(\mu)$. In this case $u_e=b$.

 \realign\begin{minipage}{0.35\textwidth}\begin{center}
 \begin{tikzpicture}[x=\boxmini,y=\boxmini]
 \draw[fill=orange] (2,2) -- (3,3) --(4,2) -- (5,3) -- (6,2) -- (7,3) -- (8,2) -- (6,0) -- (5,1) -- (4,0) -- cycle;
 \draw[fill=yellow] (0,0) -- (2,2) -- (4,0) -- (5,1) -- (6,0) -- (3,-3) -- cycle;
 \tikzset{vertex/.style={}}
\tikzset{edge/.style={very thick}}
\tabpath{+,+,+,-,+,-,+,-}
\tabpath{+,+,-,-,+,-,+,+}
\tabpath{-,-,-,+,+,+,+,+}
\draw[dashed,gray] (4,0) to (5,-1);
\node at (1,0) {$u_w$};
\node at (3,2) {$a$};
\node at (7,2) {$b$};
\node at (3,-2) {$u_s$};
\node at (5,0) {$v$};
\end{tikzpicture}\end{center}\end{minipage}\quad\begin{minipage}{0.55\textwidth}
 \begin{tikzpicture}[scale=1]
\draw[-triangle 45, thick, red] (0,0) to (0,-1) ;
\draw[triangle 45 reversed-, thick, red] (4,-1) to (4,0);
\draw[-triangle 45 reversed, thick, green] (1,0) to (1,-1) ;
\draw[triangle 45-, thick, blue] (3,-1) to (3,0);
\draw[thick, red] (0,0) to (0,-1) to [out=-90,in=180] (2,-2.5) to [out=0,in=-90] (4,-1) to (4,0) ;
\draw[thick, blue] (3,0) to (3,-1) to [out=-90,in=30] (2,-2) to [out=-150,in=90] (1,-3.5);
\draw[thick, green] (1,0) to (1,-1) to [out=-90,in=150] (2,-2) to [out=-30,in=90] (3,-3.5);
\draw[thick,blue,triangle 45 reversed-] (1,-3.5) to (1,-4);
\draw[thick,green,triangle 45-] (3,-3.5) to (3,-4);
\draw[thick,-triangle 45] (-0.5,-4) to [out=90,in=-90] (2,0);
\draw[thick,-triangle 45] (5.3,-4) to (5.3,0);
\node[red] at (4.5,-0.5) {$u_s-1$};
\node[blue] at (3.2,-0.5) {$u_s$};
\node[green ] at (1.2,-0.5) {$v$};
\node at (-1,-3) {$u_w-1$};
\node at (5.8,-3) {$b+1$};
\node at (-1,-2) {$\ldots$};
\node at (6.3,-2) {$\ldots$};
\end{tikzpicture}
\end{minipage}

where $v=u_s+a-u_w$.
\item $a-1\not\in \Peaks(\mu)$ and $b+1\in \Peaks(\mu)$. This is completely analogous to case \ref{2b}).
\item $a-1,b+1\not\in \Peaks(\mu)$.

 \realign\begin{minipage}{0.35\textwidth}\begin{center}
 \begin{tikzpicture}[x=\boxmini,y=\boxmini]
 \draw[fill=orange] (0,0) -- (1,1) --(2,0) -- (3,1) -- (4,0) -- (5,1) -- (6,0) -- (4,-2) -- (3,-1) -- (2,-2) -- cycle;
 \draw[fill=yellow](3,-3) -- (4,-2) -- (3,-1) -- (2,-2) -- cycle;
 \tikzset{vertex/.style={}}
\tikzset{edge/.style={very thick}}
\tabpath{+,-,+,-,+,-}
\tabpath{-,-,+,-,+,+}
\tabpath{-,-,-,+,+,+}
\node at (1,0) {$a$};
\node at (5,0) {$b$};
\node at (3,-2) {$u_s$};
\end{tikzpicture}\end{center}\end{minipage}\quad\begin{minipage}{0.55\textwidth}\begin{center}
 \begin{tikzpicture}[scale=1]
\draw[-triangle 45, thick, red] (0,0) to (0,-1) ;
\draw[triangle 45 reversed-, thick, red] (4,-1) to (4,0);
\draw[-triangle 45 reversed, thick, blue] (1,0) to (1,-1) ;
\draw[triangle 45-, thick, blue] (3,-1) to (3,0);
\draw[thick, red] (0,0) to (0,-1) to [out=-90,in=180] (2,-2.5) to [out=0,in=-90] (4,-1) to (4,0) ;
\draw[thick, blue] (1,0) to (1,-1) to [out=-90,in=180] (2,-1.75) to [out=0,in=-90] (3,-1) to (3,0) ;
\node[red] at (4.5,-0.5) {$u_s-1$};
\node[blue] at (3.2,-0.5) {$u_s$};
\draw[thick,-triangle 45] (4,-3) to [out=90,in=-90] (2,0);
\draw[thick,-triangle 45] (5.5,-3) to (5.5,0);
\node at (4.5,-2.4) {$a-1$};
\node at (6,-2) {$b+1$};
\node at (-1,-2) {$\ldots$};
\node at (7,-2) {$\ldots$};
\end{tikzpicture}\end{center}
\end{minipage}
\end{enumerate}
\end{enumerate}

\begin{prop}\label{nontrivial}
	Assume that $\lambda,\mu$ are two paths such that $\lambda=\mu +D$ for some Dyck strip $D$. Hence, we are in one of the eight cases labeled from 1a) to 2d) discussed above.
	\begin{enumerate}[i)]
		\item\label{pointA} 	The corresponding $S$-diagram is a non-trivial morphism of degree one between $B_\lambda^I$ and $B_\mu^I$. In particular, it forms a basis of the one-dimensional space $\Hom^1(B_\lambda^I,B_\mu^I)$. 
		\item Similarly, by taking the flips of the diagram in \ref{pointA}) we obtain a non-trivial morphism of degree one which gives a basis of $\Hom^1(B_\mu^I,B_\lambda^I)$. 
		\item\label{pointC} These morphisms do not depend on the different possible choice of a neat order of the form as in \eqref{superneat} if we identify all the different bimodules so obtained via the isomorphisms which send $1^\otimes$ to $1^\otimes$.
	\end{enumerate}
\end{prop}
\begin{proof}
It is a straightforward computation to check that all the listed morphisms are of degree $1$. We need to show that they describe morphisms which are not trivial. 
We show it in the case 2a, the other cases are similar. We have 
\setlength{\tabcolsep}{.2cm}

\begin{center}
\begin{tabular}{r l}
			\begin{tikzpicture}[scale=1]

\node (a) at (1.5,-2) {};
\draw[-triangle 45, thick, red] (0,0) to (0,-1) ;
\draw[triangle 45 reversed-, thick, red] (4,-1) to (4,0);
\draw[-triangle 45 reversed, thick, green] (1,0) to (1,-1) ;
\draw[triangle 45-, thick, blue] (3,-1) to (3,0);
\draw[thick, red] (0,0) to (0,-1) to [out=-90,in=180] (2,-2.5) to [out=0,in=-90] (4,-1) to (4,0) ;
\draw[thick, blue] (3,0) to (3,-4);
\draw[thick, green] (1,0) to (1,-4);
\draw[thick,green,triangle 45-] (1,-3) to (1,-4);
\draw[thick,blue,triangle 45 reversed-] (3,-3) to (3,-4);
\node[red] at (4.5,-0.5) {$v-1$};
\node[blue] at (3.2,-0.5) {$u_s$};
\node[green] at (1.2,-0.5) {$v'$};
\node[violet] at (4.2,-3.5) {$v$};

\draw[thick, violet, -triangle 45 reversed] (0,-4) to (0,-3);
\draw[thick, violet, triangle 45-] (4,-3) to (4,-4) ;
\draw[thick, violet] (0,-4) to (0,-3) to [out=90,in=180] (2,-1.5) to [out=0,in=90] (4,-3) to (4,-4) ;
\draw[thick,-triangle 45] (2,-4) to [out=90,in=-90] (2,0);
\node at (2.5,-3.5) {$u_w\hspace{-3pt}-\hspace{-3pt}1$};
\node at (5,-2) {$=$};

\end{tikzpicture}
&
\begin{tikzpicture}[scale=1]
\draw[thick, violet] (0,-4) to (0,-3) to [out=90,in=180] (2,-1.5) to [out=0,in=90] (4,-3) to (4,-4) ;
\node (a) at (1.5,-2) {};
\draw[-triangle 45, thick, red] (0,0) to (0,-1) ;
\draw[triangle 45 reversed-, thick, red] (4,-1) to (4,0);
\draw[-triangle 45 reversed, thick, green] (1,0) to (1,-1) ;
\draw[triangle 45-, thick, blue] (3,-1) to (3,0);
\draw[thick, red] (0,0) to (0,-1) to [out=-90,in=180] (2,-2.5) to [out=0,in=-90] (4,-1) to (4,0) ;
\draw[thick, blue] (3,0) to (3,-4);
\draw[thick, green] (1,0) to (1,-4);
\draw[thick,green,triangle 45-] (1,-3) to (1,-4);
\draw[thick,blue,triangle 45 reversed-] (3,-3) to (3,-4);
\node[red] at (4.5,-0.5) {$v-1$};
\node[blue] at (3.2,-0.5) {$u_s$};
\node[green] at (1.2,-0.5) {$v'$};
\node[violet] at (4.2,-3.5) {$v$};

\draw[thick, violet, -triangle 45 reversed] (0,-4) to (0,-3);
\draw[thick, violet, triangle 45-] (4,-3) to (4,-4) ;
\draw[thick,-triangle 45] (2,-4) to [out=90,in=-120] (4.8,-2.3) to [out=60,in=-60] (4.8,-1.7) to [out=120,in=-90] (2,0);
\node at (5,-3.3) {$u_w\hspace{-3pt}-\hspace{-3pt}1$};
\draw[gray, dashed] (-0.5,-2.7) -- (5.5,-2.7);
\draw[gray, dashed] (-0.5,-1.3) -- (5.5,-1.3);
\end{tikzpicture}
\end{tabular}
\end{center}
Because the bottom and the top third in diagram on the right are isomorphisms we can neglect the arrow labeled by $u_w-1$.
By isotopy, it is enough to check that morphism corresponding to the following diagram is not trivial.

\begin{center}
	\begin{tabular}{c c}
\begin{tikzpicture}[scale=1]

\node (a) at (1.5,-2) {};
\draw[-triangle 45, thick, red] (0,0) to (0,-1) ;
\draw[triangle 45 reversed-, thick, red] (3,-1) to (3,0);
\draw[-triangle 45 reversed, thick, green] (1,0) to (1,-1) ;
\draw[triangle 45-, thick, blue] (2,-1) to (2,0);
\draw[thick, red] (0,0) to (0,-2) to [out=-90,in=180] (1.5,-3.5) to [out=0,in=-90] (3,-2) to (3,0) ;
\draw[thick, blue] (2,0) to (2,-4);
\draw[thick, green] (1,0) to (1,-4);
\draw[thick,green,triangle 45-] (1,-3) to (1,-4);
\draw[thick,blue,triangle 45 reversed-] (2,-3) to (2,-4);
\draw[thick, violet, -triangle 45 reversed] (-1,0) to (-1,-1);
\draw[thick, violet, -triangle 45] (4,0) to (4,-1) ;
\draw[thick, violet] (-1,-1) to (-1,-3) to [out=-90,in=180] (-0.5,-3.5) to [out=0,in=-90] (0,-3) to [out=90,in=180] (1.5,-1.5) to [out=0,in=90] (3,-3) to [out=-90,in=180] (3.5,-3.5) to [out=0,in=-90] (4,-3) to (4,-1);
\node at (5,-2) {$=$};
\end{tikzpicture} &
\begin{tikzpicture}[scale=1]

\node (a) at (1.5,-2) {};
\draw[-triangle 45, thick, red] (0,0) to (0,-1) ;
\draw[triangle 45 reversed-, thick, red] (3,-1) to (3,0);
\draw[-triangle 45 reversed, thick, green] (1,0) to (1,-1) ;
\draw[triangle 45-, thick, blue] (2,-1) to (2,0);
\draw[thick, red] (0,0) to (0,-2) to [out=-90,in=180] (1.5,-3.5) to [out=0,in=-90] (3,-2) to (3,0) ;
\draw[thick, blue] (2,0) to (2,-4);
\draw[thick, green] (1,0) to (1,-4);
\draw[thick,green,triangle 45-] (1,-3) to (1,-4);
\draw[thick,blue,triangle 45 reversed-] (2,-3) to (2,-4);
\draw[thick, violet, -triangle 45 reversed] (-1,0) to (-1,-1);
\draw[thick, violet, -triangle 45] (4,0) to (4,-1) ;
\draw[thick, violet] (-1,-1) to [out=-90,in=180] (1.5,-2.3) to [out=0,in=-90] (4,-1);
\draw[gray, dashed] (-1.5,-2.5) -- (4.5,-2.5);
\end{tikzpicture}

\end{tabular}
\end{center}

\setlength{\tabcolsep}{20pt}
Now we can check that this defines a non-trivial morphism by computing the image of $1^\otimes$. Notice that the morphism corresponding to the bottom half of the diagram, up to the dashed line, sends $1^\otimes$ to $1^\otimes$. The top half is given by the composition of a clockwise cup and four left pointing crossings. The image of $1^\otimes$ under a clockwise cup is non trivial.
It follows from \cite[Relation 1.12]{ESW} that a left pointing crossing represents an injective morphisms of bimodules. This concludes our proof.

Hence, they are the generators of the one-dimensional space of morphisms of degree $1$.
The second statement immediately follows from the first one.

For the third statement, notice that if we choose a different order of the peaks $p_i$'s, then we will obtain a new box $\hat{\frP}$ replacing $\frP$. However, by \Cref{small}, the bimodules corresponding to the diagrams in the boxes $\frP$ and $\hat{\frP}$ are isomorphic. Similar statements hold when we change the orders of the peaks $q_i$'s or $r_i$'s. 
\end{proof}

\begin{definition}\label{fd}
	If $\lambda=\mu+D$ for a Dyck strip $D$, we denote by $f_D$ the map $f_D:B_\mu^I\raw B_\lambda^I$ defined above. We denote by $g_D:B_\lambda^I\raw B_\mu^I$ obtained by taking the flip of the diagram of $f_D$.
\end{definition}

We have shown in \Cref{nontrivial}.\ref{pointC}) that the morphism $f_D$ does not depend on the choice of a neat order as long as it is of the form as in \eqref{superneat}. The same holds for the flipped morphism $g_D$. To see this, it is enough to show that the flip of an isomorphism which sends $1^\otimes$ to $1^\otimes$, also sends $1^\otimes$ to $1^\otimes.$
 For this purpose, we first need to introduce intersection forms on generalized Bott--Samelson bimodules, generalizing the definition from \cite[\S 3.4]{EW1}.

\begin{definition}
	Let $(\vec{I},\vec{J})$ be a translation pair with $\vec{I}=(\emptyset,I_1,\ldots,I_k)$ and $\vec{J}=(J_1,\ldots,J_k)$. We define
 $\ctop(\vec{I},\vec{J})\in BS(\vec{I},\vec{J})$ to be the image of $1$ under the composition
\begin{equation}\label{tantef}R\xra{f_{\carcd}} R\otimes_{R^{J_k}}R^{I_k}\xra{f_{\carcd}\otimes \Iden} \ldots \raw BS((\emptyset,I_2,\ldots,I_k),(J_2,\ldots,J_k)))\xra{f_{\carcd}\otimes \Iden} BS(\vec{I},\vec{J}).
\end{equation}

The element $\ctop(\vec{I},\vec{J})$ is homogeneous of degree $\ell(\vec{I},\vec{J})$. We can always find a left $R$-basis $\calC$ of  $BS(\vec{I},\vec{J})$ such that  $\ctop(\vec{I},\vec{J})\in \calC$ and all the other elements in $\calC$ have degree less than $\ell(\vec{I},\vec{J})$. 

	We define the \emph{intersection form} on $BS(\vec{I},\vec{J})$ to be the left invariant bilinear form defined by 
	\[\langle x,y\rangle_{(\vec{I},\vec{J})} =\Trace(x\cdot y).\]
	where $\cdot$ is the component-wise multiplication and $\Trace:BS(\vec{I},\vec{J})\raw R$ is the linear operator such that $\Trace(b)$ is the coefficient of $\ctop(\vec{I},\vec{J})$ when we express $b$ in a basis $\calC$ as above. The operator $\Trace$ does not depend on the particular basis $\calC$ chosen.
\end{definition}

	We sketch here the proof of the following Lemma. A more detailed treatment of intersection form on generalized Bott--Samelson can be found in the \Cref{appendix}.

\begin{lemma}\label{flippreserves1}\label{adjoint}
	Let $(\vec{I},\vec{J})$ and $(\vec{I'},\vec{J'})$ be two translation pairs and let $\varphi :BS(\vec{I},\vec{J})\raw BS(\vec{I'},\vec{J'})$ be a morphism. 
	\begin{enumerate}[i)]
		\item The morphism $\bar{\varphi}$ is the adjoint of $\varphi$ with respect to the intersection forms $\form_{(\vec{I},\vec{J})}$ and $\form_{(\vec{I'},\vec{J'})}$
	\item Assume now that $(\vec{I},\vec{J})$ and $(\vec{I'},\vec{J'})$ are reduced with the same endpoint $w\in W^I$ and that $\varphi(1^\otimes)= 1^\otimes$.
	Then also $\bar{\varphi}(1^\otimes)=1^\otimes$.
	\end{enumerate}
\end{lemma}
\begin{proof}
	For the first part,  it is enought to consider the case when $\varphi$ is one of the building boxes in \Cref{boxes}. It is straightforward to check the statement when $\varphi=f_{\carcd},f_{\carad}$ or $f_{\duarrow}$.

For the second part, since $\varphi(1^\otimes)=1^\otimes$, the morphism $\varphi$ must be of degree $0$, and so does $\bar{\varphi}$. Then $\bar{\varphi}(1^\otimes)=c 1^\otimes$ for some scalar $c$.
	 	
	 	 Let $\rho\in (\frh^*)^I$ and $k=\ell(w)$. Then $\Trace(1^\otimes\cdot \rho^k)$ does not depend on the reduced translation pair but only on $w$. In fact, we have $\Trace(1^\otimes \cdot\rho^k)=\partial_w(\rho^k)$. It follows that
	 \[\partial_w(\rho^k)=\langle \varphi(1^\otimes), 1^\otimes\cdot  \rho^k\rangle_{(\vec{I'},\vec{J'})} =\langle 1^\otimes\rho^k, \bar{\varphi}(1^\otimes) \rangle_{(\vec{I},\vec{J})}=c\partial_w(\rho^k).\] 
	 We can choose $\rho\in (\frh^*)^I$ to be ample (cf. \cite[\S 5]{Pat4}). In this case we have $\partial_w(\rho^k)>0$, hence $c=1$.
\end{proof}

\subsection{Singular Rouquier complexes and Morphisms of degree two}\label{DegreeTwoMorphisms}

Let $\calC^b(\sbim^I)$ be the bounded category of complexes of $I$-singular Soergel bimodules and $\calK^b(\sbim^I)$ be the corresponding homotopy category.
For $\mu\in \Lambda_{n,i}$ we recall the definition of the singular Rouquier complex $E_\mu^I$ from \cite[\S 4]{Pat4}.
If $x\in W^I$ is such that $\mu=\lambda^x$ then $E_\mu^I$ is the minimal complex in $\calC^b(\sbim^I)$ of the restriction of the (ordinary) Rouquier complex $E_x\in \calK^b(\sbim)$ to a complex of $(R,R^I)$-bimodules. We can write
\[E_\mu^I=[\ldots {}^{-2}E_\mu^I\raw {}^{-1}E_\mu^I \raw {}^0E_\mu^I(=B_\mu^I)\raw 0]\in \calC^b(\sbim^I).\]
The singular Rouquier complex is perverse \cite[Theorem 4.13]{Pat4}, i.e. for any $i$ the bimodule ${}^{-i}E_\mu^I(i)$ is a direct sum of indecomposable self-dual singular Soergel bimodules. 
Moreover, by \cite[Remark 4.14]{Pat4} the indecomposable Soergel bimodules occurring in each degree are given by the inverse $I$-parabolic KL polynomials:
\[{}^{-i}E_\mu^I\cong \bigoplus_{\substack{\lambda \in \Lambda_{n,i}\\g_{\lambda,\mu}^I(v)=v^i}} B_\lambda^I(-i).\]

Recall by \Cref{SZJ} that $g_{\lambda,\mu}^I(v)$ can be obtained by counting Dyck partitions in $\Conf^2(\lambda,\mu)$.

\begin{remark}\label{firstdiff}
	Following \cite[Lemma 4.15]{Pat4}, we can fully describe the first differential of the Rouquier complex $d_{-1}^\mu:{}^{-1}E_\mu^I\raw B_\mu^I$. For any Dyck strip $D$ that can be removed from $\mu$ there is a summand $B_{\mu-D}^I(-1)\cus {}^{-1}E_\mu^I$.\footnote{We use $B\cussmall B'$ to denote ``$B$ is a direct summand of $B'$.''} Let $f_D:B_{\mu-D}^I\raw B_\mu$ be the morphism of degree $1$ from \Cref{fd}. Then, there exists an isomorphism 
	\[ \phi: \bigoplus_D B_{\mu-D}^I(-1)\xra{\sim} {}^{-1}E_\mu^I,\]
	where $D$ runs over all Dyck strips that can be removed from $\mu$, such that $d_\mu^{-1}\circ \phi=\bigoplus_D f_D$.
\end{remark}

Let $\lambda$ and $\mu$ be two paths with $\lambda<\mu$. Assume that there exists a Dyck partition $\{C,D\}\in \Conf^2(\lambda,\mu)$ with two elements. Notice that this implies that $B_{\lambda}^I(-2)$ is a direct summand of ${}^{-2}E_{\mu}^I$.

From Soergel's Hom formula we have
\begin{equation}\label{Hom2}
\dim\Hom^2(B_\lambda^I,B_\mu^I)=\left(\begin{array}{c}\text{coeff. of }v^2\\\text{ in }h_{\lambda,\mu}^I(v)\end{array}\right)+ \left\lvert\left\{ \begin{array}{c|c}\multirow{3}{*}{$\nu\in \Lambda_{n,i}$}&\nu\leq \lambda\text{ and there exist}\\ &\text{ Dyck strips }T_1\text{ and }T_2\text{ with }\\ &
\nu+T_1=\lambda\text{ and }\nu+T_2=\mu\end{array}\right\}\right\rvert.
\end{equation}
To compute $\dim \Hom^2(B_{\lambda}^I,B_\mu^I)$ we thus need to classify all Dyck partitions in $\Conf^1(\lambda,\mu)$ with two elements and 
all the ways in which one can obtain $\calA(\lambda,\mu)$ as a difference of two Dyck paths.
 We divide this task into cases.

\subsubsection{The case of two overlying strips}
\label{overlyingSec}

Let $\lambda,\mu,C$ and $D$ be as above, that is $\{C,D\}\in \Conf^2(\lambda,\mu)$. Assume that there exists a box of $C$ South of a box of $D$. The condition on partitions of type 2 forces every box South of a box of $D$ is in $C$. Let $\{C',D'\}$ be any other Dyck partition of $\calA(\lambda,\mu)$. Two boxes lying one North of the other cannot belong to the same Dyck strip, so we can assume $D\cug D'$ and $C'\cug C$. 

Assume first $\hgt(C)<\hgt(D)$. Then, since $\hgt(D)\leq \hgt(D')$, we must have $D'=D$ and $C'=C$. In particular there are no Dyck partitions in $\Conf^1(\lambda,\mu)$ with exactly two elements.

\begin{center}
\begin{tikzpicture}[x=\boxmini,y=\boxmini]
\tikzset{vertex/.style={}}
\tikzset{edge/.style={very thick}}
\tabpath{+,+,-,+,-,+,-,-}
\tabpath{-,-,-,+,-,+,+,+}
\tikzset{edge/.style={}}
\tabpath{+,-,-,+,-,+,+,-}

\node at (5,0) {$D$};
\node at (6,-1) {$C$};
\end{tikzpicture}
\end{center}

\begin{lemma}\label{missing}
	Let $\{C,D\}\in \Conf^2(\lambda,\mu)$ be with $\hgt(C)<\hgt(D)$. If there exists a box in $C$ South of a box of $D$ then $f_D\circ f_C=0$
\end{lemma}
\begin{proof}
There exist no strips $T_1$ and $T_2$ such that $T_1\sqcup D\sqcup C=T_2$.\footnote{We use $\sqcup$ to denote the disjoint union.}
It follows from \eqref{Hom2} that $\Hom^2(B_\lambda^I,B_\mu^I)=0$, so $f_D\circ f_C=0$.
\end{proof}

Assume now $\hgt(C)\geq \hgt(D)$. Then there exists a unique Dyck partition of type 1 obtained by taking as $C'$ the set of all boxes South of a box of $D$ and as $D'$ its complement.

\begin{figure}[H]
\begin{center}
\begin{tikzpicture}[x=\boxmini,y=\boxmini]
%\tikzset{vertex/.style={circle,fill=black,inner sep=1.2pt,outer sep=0pt}}%
\tikzset{vertex/.style={}}%
\tikzset{edge/.style={very thick}}%

\tabpath{+,-,-,-,+,-,+,-,+,-,+,+,+,-}
\tabpath{-,-,-,-,-,-,+,-,+,+,+,+,+,+}%
\tikzset{edge/.style={}}%
\tabpath{+,-,-,-,-,-,+,-,+,+,+,+,+,-}%
\tikzset{edge/.style={dotted}}%
%\tabpath{+,+,+,-,+,-,-,-}%
%\tabpath{+,+,-,+,-,+,-,-}%
%\tabpath{+,-,+,-,+,-,+,-}%
%\draw[red,line width=3pt] (2,1) -- (3,2) -- (4,1);
\node at (6,-3) {$D$};
\node at (6,-5) {$C$};
\end{tikzpicture}\quad
\begin{tikzpicture}[x=\boxmini,y=\boxmini]
%\tikzset{vertex/.style={circle,fill=black,inner sep=1.2pt,outer sep=0pt}}%
\tikzset{vertex/.style={}}%
\tikzset{edge/.style={very thick}}%

\tabpath{+,-,-,-,+,-,+,-,+,-,+,+,+,-}
\tabpath{-,-,-,-,-,-,+,-,+,+,+,+,+,+}%
\tikzset{edge/.style={}}%
\tabpath{-,-,-,-,+,-,+,-,+,-,+,+,+,+}%
\tikzset{edge/.style={dotted}}%
%\tabpath{+,+,+,-,+,-,-,-}%
%\tabpath{+,+,-,+,-,+,-,-}%
%\tabpath{+,-,+,-,+,-,+,-}%
%\draw[red,line width=3pt] (2,1) -- (3,2) -- (4,1);
\node at (4,-3) {$D'$};
\node at (6,-5) {$C'$};
\end{tikzpicture}
\caption{In this example we see how, if $\hgt(C)\geq \hgt(D)$, there exists a unique Dyck partition of Type 1 $\{D',C'\}\in \Conf^1(\lambda,\mu)$ with $D\subset D'$ and $C'\subset C$.}\label{fromIItoI}
\end{center}
\end{figure}

\begin{lemma}\label{overlying}
Let $D,C,D',C'$ be as above. Then the morphism $f_C\circ f_D$ is not trivial and we have $f_C\circ f_D=f_{D'}\circ f_{C'}$.
\end{lemma}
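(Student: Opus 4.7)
The strategy is to compute $\dim\Hom^2(B_\lambda^I, B_\mu^I)$ via \eqref{Hom2} and Theorem \ref{SZJ}, show both compositions are nonzero elements of this space, and pin down the scalar by comparing their images on $1^\otimes$.

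First, I would argue $\dim\Hom^2(B_\lambda^I,B_\mu^I) = 1$. By Theorem \ref{SZJ}, the coefficient of $v^2$ in $h_{\lambda,\mu}^I(v)$ is the number of Type $1$ Dyck partitions of $\calA(\lambda,\mu)$ with two strips, which in our setting is exactly $1$, given by the $\{C',D'\}$ described just before the lemma. For the second summand of \eqref{Hom2}, I would show that no path $\nu\leq \lambda$ admits Dyck strips $T_1,T_2$ with $\nu+T_1=\lambda$ and $\nu+T_2=\mu$: such a $T_2$ would fill the region $\calA(\nu,\mu) \supseteq C\sqcup D$ as a single thickened Dyck path, but $C$ contains a box directly below a box of $D$, so two boxes would be stacked vertically in the same column of $T_2$, contradicting the one-box-per-column structure of a Dyck strip.

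Next, I would show $f_C\circ f_D\neq 0$ by stacking the explicit local diagrams from cases 1a)--2d) of \S\ref{diagramsec} corresponding to the factors of the composition, and computing the image of $1^\otimes$. The resulting diagram contains cup--cap compositions which collapse to Frobenius traces; these are nondegenerate by construction (Table \ref{boxes} together with Proposition \ref{adjoint}), giving nontriviality. The same argument applies to $f_{D'}\circ f_{C'}$. Since both nonzero morphisms live in the one-dimensional space $\Hom^2(B_\lambda^I,B_\mu^I)$, they differ by a nonzero scalar $c$.

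Finally, to identify $c=1$, I would compute both $f_C\circ f_D(1^\otimes)$ and $f_{D'}\circ f_{C'}(1^\otimes)$ and show they coincide modulo lower terms. Lemma \ref{flippreserves1} and Corollary \ref{deltawelldef} guarantee that the canonical isomorphisms (coming from different neat orderings of peaks via Theorem \ref{small}) between distinct generalized Bott--Samelson presentations of an indecomposable bimodule preserve the element $1^\otimes$, so both compositions produce the same leading term in $B_\mu^I$. Since $\Hom^0(B_\lambda^I,B_\mu^I) = 0$ (as $\lambda<\mu$), two degree-$2$ morphisms agreeing on $1^\otimes$ up to lower terms must coincide, forcing $c=1$. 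The main obstacle will be tracking these canonical identifications so that the local diagrams for $f_C, f_D$ can be meaningfully stacked against those for $f_{C'}, f_{D'}$; this will likely require choosing a single neat ordering of the peaks of $\lambda$ that is simultaneously compatible with the presentations of $B_\lambda^I$, $B_{\lambda+C}^I$, $B_{\lambda+C'}^I$, and $B_\mu^I$, and then verifying the equality of the two resulting stacked diagrams using the isotopy and basic Frobenius relations from \cite{ESW}.
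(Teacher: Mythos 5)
Your opening move — computing $\dim\Hom^2(B_\lambda^I,B_\mu^I)=1$ via \eqref{Hom2} and Theorem \ref{SZJ}, and observing that no $\nu\le\lambda$ can admit $T_1,T_2$ with $\nu+T_1=\lambda$, $\nu+T_2=\mu$ because $T_2$ would contain the two vertically stacked boxes of $C\sqcup D$ in the same column — is correct and matches the paper. The divergence is in how you get the precise identity $f_C\circ f_D = f_{D'}\circ f_{C'}$ and the nontriviality.

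The step that does not work as stated is the claim that ``since $\Hom^0(B_\lambda^I,B_\mu^I)=0$, two degree-$2$ morphisms agreeing on $1^\otimes$ up to lower terms must coincide, forcing $c=1$.'' Because $1^\otimes$ generates $B_\lambda^I$ as an $(R,R^I)$-bimodule, a bimodule morphism is determined by its exact value on $1^\otimes$; agreement only modulo $LT(B_\mu^I)$ does not force equality, and the vanishing of $\Hom^0$ is not the relevant fact. If you wanted to run this argument you would instead need to show that $f_{D'}\circ f_{C'}(1^\otimes)$ has a nonzero image in $B_\mu^I/LT(B_\mu^I)$ and then compare leading coefficients; this is more delicate than your sketch suggests. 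The paper instead proves the equality on the nose: it chooses a neat ordering of $\Peaks(\lambda)$ of the form $(p_1,\ldots,p_m,q_1,\ldots,q_{m'},a_D,b_D,r_1,\ldots,r_{m''},a_C,b_C,\tilde r_1,\ldots,\tilde r_{m'''})$ so that the local diagrams for $D$ and $C$ sit in positions that can be slid past each other by isotopy, turning the picture for $f_D\circ f_C$ directly into that for $f_{D'}\circ f_{C'}$, with no scalar indeterminacy. Nontriviality is then deduced, not by evaluating on $1^\otimes$, but from the one-dimensionality together with the factorization of degree-$2$ maps from \cite[Lemma 4.15]{Pat4}: since $\Hom^2$ is spanned by these factored compositions, and they coincide, the single resulting map must be nonzero. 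Your final paragraph does point at the isotopy argument — choose a neat ordering compatible with all four bimodules and compare stacked diagrams — which is the actual key idea; but it should replace the ``compare $1^\otimes$ modulo lower terms'' step rather than sit alongside it as an afterthought.
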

\begin{proof}
From \eqref{Hom2} we have $\dim \Hom^2(B_\mu^I,B_\lambda^I)=1$, so it is clear that $f_D\circ f_C$ is equal to $f_{D'}\circ f_{C'}$ up to a scalar. To see that they actually coincide notice that we can choose a neat ordering of $\Peaks(\lambda)$ of the form
\[(p_1,p_2,\ldots,p_m,q_1,q_2,\ldots q_{m'},a_D,b_D,r_1,r_2,\ldots,r_{m''},a_C,b_C,\tilde{r}_1,\tilde{r}_2,\ldots,\tilde{r}_{m'''})\]
where $a_D,b_D$ are the extreme boxes of $D$ and $a_C$ and $b_C$ are the extreme boxes of $C$ and $\hgt(p_i)<\hgt(a_C)\leq \hgt(r_j)$ for any $i$ and $j$.
Now it follows from the explicit description of the morphisms above that starting from $f_D\circ f_C$, we can apply an isotopy that ``slides'' $f_D$ down and $f_C$ up to obtain $f_{D'}\circ f_{C'}$.

Finally, since $\Hom^2(B_\mu^I,B_\lambda^I)$ is generated by $f_C\circ f_D$($=f_{D'}\circ f_{C'}$) by \cite[Lemma 4.15]{Pat4}, the morphism $f_C\circ f_D$ must be non-trivial.
\end{proof}

\subsubsection{The case of two distant strips}

Assume now that no box in $D$ is North of or South of a box in $C$. In this case we say that the two strips are \emph{distant}. Notice that in this case we can apply $f_D$ and $f_C$ in any order. However, as we will see, in general we have $f_D\circ f_C\neq f_C\circ f_D$.

 It is evident that the only Dyck partition of $\calA(\lambda,\mu)$ with two elements into two strips is $\{C,D\}$.
There exists a Dyck strip $T_1$ that can be removed from $\lambda$ such that $T_2=D\sqcup T_1\sqcup C$ is also a Dyck strip if and only if $\hgt(D)=\hgt(C)$ and there are no peaks in $\lambda$ between $D$ and $C$ of height at least $\hgt(D)$.

\begin{center}
\begin{tikzpicture}[x=\boxmini,y=\boxmini]
\tikzset{vertex/.style={}}%
\tikzset{edge/.style={very thick}}%

\tabpath{+,-,-,+,+,-,-,+,+,-,+,-}
\tabpath{-,-,-,+,+,-,-,+,+,-,+,+}
\tabpath{-,-,-,+,+,+,-,+,-}
\node at (3,-2) {$D$};
\node at (7,-2) {$T_1$};
\node at (10,-1) {$C$};
\end{tikzpicture}
\end{center}

Assume that there does not exist such a strip $T_1$. Then, by \eqref{Hom2}, $\dim\Hom^2(B_\mu^I,B_\lambda^I)=1$ and there exists $c\in \bbQ$ so that $f_D\circ f_C\cong cf_C\circ f_D$. In order to show that $c\in \bbQ^*$ it is enough to show that both $f_D\circ f_C$ and $f_C\circ f_D$ are non-zero. We will show this in \Cref{RQexplicit}. %(it can also be done explicitly after finding a neat ordering which is compatible both with $D$ and $C$).

Assume now that there exists such a $T_1$. Then $\dim \Hom^2(B_\lambda^I,B_\mu^I)=2$. In general, as the next example shows, the two morphisms $f_D\circ f_C$ and $f_C\circ f_D$ need not even be multiple of each others. 

\begin{example}\label{ex132}
	Let $n=4$ and $i=2$. Let $\lambda=(\dpath,\upath,\dpath,\upath)$ and $\mu=(\upath,\dpath,\upath,\dpath)$ and let $D$ and $C$ be the single-box Dyck strips as in the following picture.
	
	\begin{center}
		\begin{tikzpicture}[x=0.7cm,y=0.7cm]
		\tikzset{vertex/.style={}}
		\tikzset{edge/.style={very thick}}
		\tabpathc{+,-,+,-}{blue}
		\tabpath{-,-,+,+}
		\tabpathc{-,+,-,+}{red}
		\node at (1,0) {$D$};
		\node at (3,0) {$C$};
		\node at (2,-1) {$T_1$};
		\node[blue] at (4,1) {$\mu$};
		\node[red] at (4,-1) {$\lambda$};
		
		\end{tikzpicture}
	\end{center}
	
	We have $\lambda+D=(\upath,\dpath,\dpath,\upath)$ and $\lambda+C=(\dpath,\upath,\upath,\dpath)$. Let $T_1$ be the Dyck strip consisting of the only box in $\calA(\lambda)$. Notice that $T_2:=C\sqcup D\sqcup T_1$ is a Dyck strip.
	From \Cref{firstdiff} we deduce that the Rouquier complex $E_\mu^I$ is isomorphic to the following complex.
	
	\begin{center}
		\begin{tikzpicture}
		\node at (0,3) {${}^{-2}E_\mu^I$};
		\node at (3,3) {${}^{-1}E_\mu^I$};
		\node at (6,3) {${}^{0}E_\mu^I$};
		\node at (-3,3) {${}^{-3}E_\mu^I$};
		\node (a) at (0,0) {$B_\lambda^I(-2)$};
		\node (b1) at (3,1.5) {$B_{\lambda+D}^I(-1)$};
		\node (b2) at (3,0) {$B_{\lambda+C}^I(-1)$};
		\node (b3) at (3,-1.5) {$B_{id}^I(-1)$};
		\node (c) at (6,0) {$B_\mu^I$};
		\node at (3,0.75) {$\oplus$};
		\node at (3,-0.75) {$\oplus$};
		\node (d) at (-3,0) {$0$};
		\node (e) at (9,0) {$0$};
		
		\path[->] (a) edge node[above] {$c_1\cdot f_D\;\;$}(b1);
		\path[->] (a) edge node[above] {$c_2\cdot f_C$}(b2);
		\path[->] (a) edge node[below] {$c_3 \cdot g_{T_1}\;\;$}(b3);
		\path[->] (b1) edge node[above] {$f_C$}(c);
		\path[->] (b2) edge node[above] {$f_D$}(c);
		\path[->] (b3) edge node[above] {$f_{T_2}$}(c);
		\path[->] (d) edge (a);
		\path[->] (c) edge (e);
		\end{tikzpicture}
	\end{center}
	with $c_1,c_2,c_3\in \bbQ$. 
	Since $H^{-2}(E_\mu^I)=0$ we have that $c_1f_D(1^\otimes)+c_2f_C(1^\otimes)+c_3g_{T_1}(1^\otimes)\neq 0$. Since $g_{T_1}(1^\otimes)=0$,
	one between $c_1$ and $c_2$ must be non-zero. We can assume $c_1\neq 0$. Now a simple computation shows that $f_C\circ f_D(1^\otimes)\neq 0$. Since  the differential must square to zero, we see that also $c_2\neq 0$.	We conclude that $c_1 f_D\circ f_C-c_2f_C\circ f_D=c_3f_{T_2}\circ g_{T_1}$ with $c_1,c_2\neq 0$. 
	
	One can actually compute the maps above using the explicit descriptions of \Cref{diagramsec} and check that $f_D\circ f_C$ is not a scalar multiple of $f_C\circ f_D$, i.e. that also $c_3\neq 0$. However, notice that $f_{T_2}\circ g_{T_1}$ factors through $B_{id}^I$.
\end{example}

	For $B,B'\in \sbim^I$ we denote by $\Hom_{\leq \lambda}(B,B')$ (resp. $\Hom_{< \lambda}(B,B')$ ) the subspace of $\Hom(B,B')$ generated by morphisms which factor through  $B_\mu^I$ for $\mu\leq \lambda$ (resp. $\mu<\lambda$). Let $\Hom_{\not< \lambda}(B,B'):=\Hom_{\leq \lambda}(B,B')/\Hom_{< \lambda}(B,B')$.

We record the general behavior, which is well illustrated by the example above, in the following Proposition. A proof will be given in the next section (\S \ref{RQexplicit}).

\begin{prop}\label{distantDp}
Let $D$ and $C$ be two distant Dyck strips that can be removed from a path $\lambda$. 
\begin{enumerate}[i)]
\item\label{dist1} If $C \sqcup D$ cannot be obtained as the difference of two Dyck strips then there exists $c\in \bbQ^*$ such that $f_D\circ f_C=c\cdot f_C\circ f_D$.
\item\label{dist2} If $\hgt(D)=\hgt(C)$ then there exists $c\in \bbQ^*$ such that \[f_D\circ f_C=c\cdot f_C\circ f_D\in \Hom_{\not <\lambda}^\bullet(B_\lambda^I,B_\mu^I).\]
\end{enumerate}
\end{prop}

It is also important to study the composition between a morphism $f_C$ and the flipped morphism $g_D$, when $C$ and $D$ are distant.

\begin{lemma}\label{distneg}
	Assume that $D$ and $C$ are distant strips. Then, there exists $c\in \bbQ^*$ such that \[f_D\circ g_C=c\cdot g_C\circ f_D:B^I_{\lambda+C}\raw B^I_{\lambda+D}.\]

\end{lemma} 
\begin{proof}
	From \cite[Lemma 4.15]{Pat4}, every map of degree $2$ between $B^I_{\lambda+C}$ and $B^I_{\lambda+D}$ can be realized as a sum of maps factoring through $B_{\mu+C}^I\xra{g_R} B^I_{\mu+C-R}\raw B^I_{\mu+D}$ for some Dyck strip $R$. This can happen only if there exists a Dyck strip $T$ such that $\mu+D=\mu+C-R+T$.

	We claim that the only possibility is $T=D$ and $R=C$.	In fact, we have $T\sqcup C= D\sqcup R$, so $D\cug T$ and $C\cug R$. Let $U:=T\setminus D=R\setminus C$.
	We can assume that $C$ lies on the right of $D$, so $U$ lies on the left side of $C$ and on the right side of $D$. 
	In particular the set of boxes $U$ is either empty or connected. Assume that $U$ is not empty (otherwise $T=D$). 
	
	\begin{center}
		\begin{tikzpicture}[x=\boxmini,y=\boxmini]
		\tikzset{vertex/.style={}}%
		\tikzset{edge/.style={very thick}}%
		\draw[fill=orange] (6,0) -- (8,-2) -- (10,0) -- (11,-1) -- (12,0) -- (13,-1) -- (11,-3) -- (10,-2) -- (8,-4) -- (5,-1) -- cycle;
		\tabpath{+,-,-,+,+,-,-,-,+,+,-,+,-}
		\tabpath{-,-,-,+,+,-,-,-,+,+,-,+,+}
		\tabpath{-,-,-,+,+,+,-,-,+,-}
		\tabpathc{-,-,-,+,+,+,-,-,+,-,-,+,+}{blue}
		\node[orange] at (9,-4) {$R$};
		\node at (3,-2) {$D$};
		\node at (7,-2) {$U$};
		\node at (12,-1) {$C$};
		\node[blue] at (0,-1) {$\mu$};
 		\end{tikzpicture}
	\end{center}

	Then the left and the right extreme boxes of $R$ are resp. in $U$ and in $C$, so in particular $\hgt(R)=\hgt(C)$. 
	Similarly, the right extreme box of $U$ must be an extreme box of $T$. But this box has height $\hgt(C)-1$, hence $T$ cannot be a Dyck strip since its two extreme boxes have different heights.
\end{proof}

\subsection{Description of Rouquier complexes}\label{RQexplicit}

The \Cref{ex132} above exhibits how the knowledge of singular Rouquier complexes is a useful tool for computations involving the maps $f_D$'s.
The goal of this section is to generalize this \Cref{ex132} and give a partial description of singular Rouquier complexes. This will allow us to prove \Cref{distantDp} in general.

%Recall that the terms ${}^{-i}E_{\lambda}^I$ of the singular Rouquier complex $E_{\lambda}^I$ are determined by the inverse $I$-parabolic Kazhdan--Lusztig polynomials $g_{\mu,\lambda}^I(v)$. 

In \Cref{firstdiff} we have recalled how to describe the first differential $d_{-1}^{\mu}:{}^{-1}E_\mu^I\raw B_{\mu}^I$. 
We now focus on the second differential $d_{-2}^\mu:{}^{-2}E_\mu^I\raw {}^{-1}E_\mu^I$. 

\begin{prop}\label{distantprop}
	Let $\nu<\lambda<\mu \in \Lambda_{n,i}$ be such that $B_\nu^I(-2)\cus {}^{-2}E_\mu^I$ and $B_\lambda^I(-1)\cus {}^{-1}E_\mu^I$. Assume there exists two distant Dyck strips $C$ and $D$ such that $\nu+C=\lambda$ and $\lambda+D=\mu$. Then the second differential $d^{\mu}_{-2}$ of $E_\mu$ induces a non-trivial map between $B_\nu^I(-2)$ and $B_\lambda^I(-1)$.
\end{prop}

\begin{remark}
	\Cref{distantprop} remains valid also when $C$ and $D$ are not distant. However, we only consider distant strips as this is the only case we actually need.
\end{remark}
%\item\label{Case4} $C$ and $D$ are distant Dyck strips.
	
%	Since $C$ and $D$ are distant Dyck strips, they can be removed from $\mu$ in any order and we have $\nu=\mu-D-C$ and $\lambda=\mu-C$. The remaining part of this section is devoted to this case. Our strategy is to construct a new complex $\tilde{E}\in \calC^b(\sbim^I)$ such that its minimal complex is $E_\mu^I$ and it is easier to study than $E_\mu^I$ itself.

%Left aside that we still need to verify Case \ref{Case4} (which will be done in the remainder of the section), the proof of the proposition concludes.

The proof of \Cref{distantprop} will take most of the remainder of this section.
 Our strategy is to construct a new complex $\tilde{E}\in \calC^b(\sbim^I)$ such that its minimal complex is $E_\mu^I$ and it is easier to study than $E_\mu^I$ itself.
 We first need the following simple fact about complexes. This is based on \cite[\S 6.1, Footnote 3]{EW1}.

\begin{lemma}\label{subcomplex}
	Let $E\in \calC^b(\sbim^I)$ and let $E_1$ be a subcomplex of $E$. Assume that $E_1=E_2\oplus T$ with $T$ contractible. Then there exists a decomposition $E=\tilde{E}\oplus T'$, with $T'\cong T$ and such that $E_2$ is a subcomplex of $\tilde{E}$.
\end{lemma}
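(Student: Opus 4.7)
The plan is to produce a chain-level retraction $p\colon E\to T$ that vanishes on $E_2$; setting $\tilde{E}:=\ker p$ will then give the desired decomposition $E=\tilde{E}\oplus T$ with $E_2\subseteq\tilde{E}$.

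First I would fix a contracting homotopy $h\colon T\to T$ of degree $-1$ satisfying $d_T h+h d_T=\Iden_T$, which exists because $T$ is contractible. Since $E_1$ is a subcomplex of $E$ (so that $E_1^i$ is a direct summand of $E^i$ in $\sbim^I$ for every $i$) and $E_1=E_2\oplus T$ as complexes, the bimodule $T^i$ is a direct summand of $E^i$ in each homological degree. Choose any graded (i.e., not necessarily chain) retraction $q\colon E\to T$ extending the chain projection $p_1\colon E_1=E_2\oplus T\twoheadrightarrow T$. The failure $d(q):=d_T q-q d_E$ of $q$ to commute with the differentials is a map $E\to T$ of degree $+1$, and a routine check using $d^2=0$ shows that $d(q)$ is a cocycle in the Hom-complex $\Hom^\bullet(E,T)$.

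Next I would correct $q$ by setting
\[ p:=q-h\circ d(q)=q-h d_T q+h q d_E. \]
Using $d_T h=\Iden_T-h d_T$ together with $d_T^2=d_E^2=0$, a short direct computation gives $d_T p-p d_E=0$, so $p$ is a chain map. Moreover, since $p_1$ is a chain map and $q|_{E_1}=p_1$, we have $d(q)|_{E_1}=0$; hence $p|_{E_1}=q|_{E_1}$, which immediately gives both $p|_T=\Iden_T$ and $p|_{E_2}=0$.

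To conclude, $p$ is split by the inclusion $\iota_T\colon T\hookrightarrow E$, so $E=\ker(p)\oplus T$ as complexes, and $E_2\subseteq\ker(p)=:\tilde{E}$. The only step requiring attention is the existence of the graded retraction $q$ as a morphism in $\sbim^I$ in each degree; this rests on interpreting ``subcomplex'' in $\calC^b(\sbim^I)$ as degreewise split, which is the standard convention in the additive setting. The rest is the standard homological-algebra trick of killing a Hom-cocycle by contracting along $h$.
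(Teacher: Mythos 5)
Your proof is correct, and it takes a genuinely different route from the paper. The paper reduces to the case of a single two-term contractible summand $T=[B\xrightarrow{\sim}B]$ (using the Krull--Schmidt decomposition of a contractible complex) and then performs an explicit gauge transformation on the matrix of the differential of $E$ to kill the unwanted off-diagonal entry $\beta_n$, peeling off one elementary summand of $T$ at a time. You instead treat $T$ all at once: you fix a contracting homotopy $h$ with $d_T h + h d_T = \Iden_T$, start from an arbitrary degreewise bimodule retraction $q\colon E\to T$ extending the projection $E_1\twoheadrightarrow T$, observe that the failure $d(q)=d_Tq-qd_E$ is a cocycle vanishing on $E_1$, and correct $q$ to a chain retraction $p:=q-h\,d(q)$ still satisfying $p|_T=\Iden_T$ and $p|_{E_2}=0$; then $E=\ker p\oplus T$ does the job. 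Both arguments rest on the same two structural inputs — $E_1$ (hence $T$) being degreewise split in $E$, and $\sbim^I$ being idempotent complete so that $\ker p$ lives in $\calC^b(\sbim^I)$ — and both produce $T'=T$ on the nose, not merely up to isomorphism. Your homotopy-perturbation formulation is cleaner and avoids the reduction to elementary contractibles; the paper's matrix argument is perhaps more concrete and closer in spirit to the Gaussian-elimination manipulations of minimal complexes used elsewhere in that section, and it makes the analogy with the footnote in \cite[\S 6.1]{EW1} explicit. The remark you add about ``subcomplex'' meaning degreewise split is exactly the right point to flag, and it is indeed the convention the paper is implicitly using (and what holds in the application inside Proposition \ref{2diff}).
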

\begin{proof}
	The complex $T$ is contractible, so in $\calC^b(\sbim^I)$ there exists a decomposition
	$T\cong \bigoplus_{k} [ {B_{x_k}^I(m_k)}\isom B_{x_k}^I(m_k)]$. By removing one summand of $T$ at a time, we can restrict ourselves to the case $T= [{B}\isom B]$ for some $B\in \sbim^I$. So, there exists $n\in \bbZ$ such that ${}^iT=0$ unless $i=n$ or $i=n+1$.
	
	For any $i$ we have a decomposition ${}^iE={}^iE_1\oplus {}^iV={}^iE_2\oplus {}^iT\oplus {}^iV$ for some ${}^iV\in \sbim^I$. With respect to this decomposition, the differential $d_i^E:{}^iE_2\oplus {}^iT\oplus {}^iV\raw {}^{i+1}E_2\oplus {}^{i+1}T\oplus {}^{i+1}V$ has the form:
	\[ d_i^E=\begin{pmatrix}
		d_i^{E_2} & 0 & \alpha_i \\ 0 & d_i^{T} & \beta_i \\ 0 & 0 & \gamma_i
	\end{pmatrix}.\]
	Now $d_i^T=0$ if $i\neq n$ and $d_n^T$ is an isomorphism. If we apply the automorphism
	\[ \begin{pmatrix}
		\Iden& 0 & 0 \\ 0 & \Iden & -(d_n^T)^{-1}\beta_n \\ 0 & 0 & \Iden
	\end{pmatrix}\]
	on ${}^{n}E$, we obtain a new decomposition, say ${}^nE={}^nE_2\oplus {}^{n}T\oplus {}^{n}\tilde{V}$, with ${}^n\tilde{V}\cong {}^nV$.
	With respect to this new decomposition the differential
	$d_n^E$ has the form
	\[ d_i^E=\begin{pmatrix}
		d_n^{E_2} & 0 & \alpha_n \\ 0 & d_n^{T} &0 \\ 0 & 0 & \gamma_n
	\end{pmatrix}.\]
	Now is evident that $T$ splits from $E$, so we can define a complex $\tilde{E}= E_2\oplus \tilde{V}$, so that we have $E= \tilde{E}\oplus T$ and $E_2$ is a subcomplex of $\tilde{E}$.
\end{proof}

\begin{definition}
We call $R(D)$ the intersection of $\calA(\mu)$ with the smallest square (with sides tilted by $45^\circ$ from the axes) containing the Dyck strip $D$.

Let $\tilde{J_D}$ be the set of simple reflections occurring as labels of boxes of $D$ (or, equivalently, of $R(D)$). Let $j_D$ be the label of southernmost box of $R(D)$ and let $J_D=\tilde{J_D}\setminus \{j_D\}$.

	\begin{figure}
		\begin{center}
	\begin{tikzpicture}[x=\boxmini,y=\boxmini]
	\tikzset{vertex/.style={}}%
	\tikzset{edge/.style={very thick}}%
	%\draw[help lines] (-2,-10) grid (25,10);
	
	\draw[fill=yellow] (4,2) -- (6,0) -- (7,1) -- (10,-2) -- (12,0) -- (13,-1) -- (10,-4) -- cycle;
	\draw[fill=orange] (4,2) -- (5,3) -- (6,2) -- (7,3) -- (10,0) -- (12,2) -- (13,1) -- (15,3) -- (16,2) -- (13,-1) -- (12,0) -- (10,-2) -- (7,1) -- (6,0) -- (4,2) -- cycle;
	\draw[fill=orange] (20,2) -- (21,3)-- (22,2) -- (23,3) -- (24,2) -- (22,0) -- cycle;
	\tabpathc{+,-,+,+,+,-,+,-,-,-,+,+,-,+,+,-,-,-,+,+,+,-,+,-}{blue}
	\begin{scope}[shift={(0,-0.1)}]
	\tabpathc{+,-,+,+,-,-,-,-,-,-,+,+,+,+,+,+,-,-,+,+,-,-,+,+}{red}
	\end{scope}
	\begin{scope}[shift={(0,-0.2)}]
	\tabpathc{+,-,+,+,-,-,+,-,-,-,+,+,-,+,+,+,-,-,+,+,-,-,+,+}{green}
	\end{scope}
	\tabpath{-,-,-,-,-,-,-,-,-,-,-,+,+,+,+,+,+,+,+,+,+,+,+,+}

	\node at (11,-1) {$R(D)$};
	\node at (9,0) {$D$};
	\node at (23,2) {$C$};
	\node at (26,2) {$=R(C)$};
	\node at (10.1,-3) {$j_D$};
	\node at (22.1,1) {$j_C$};
	\node[red] at (13,-2.5) {$\kappa$};
	\node[green] at (17,-0.5) {$\nu$};
	\node[blue] at (9,2) {$\mu$};
	\end{tikzpicture}
\end{center}
\caption{In this figure the path $\mu$ is depicted in blue, the path $\nu$ in green and the path $\kappa$ in red. The Dyck strips $D$ and $C$ are colored in orange and the region $R(D)$ not covered by $D$ in yellow. In this example we have $R(C)=C$.}  
\end{figure}

\end{definition}

Notice that, since the strips $C$ and $D$ are distant, they can be removed from $\mu$ in any order and we have $\nu=\mu-D-C$. Moreover, $R(D)$ and $R(C)$ are disjoint.

\begin{proof}[Proof of \Cref{distantprop}.]

Let $\kappa$ be the path obtained by removing from $\mu$ all the boxes in $R(D)$ and $R(C)$.

Let $J=J_D\cup J_C$. Notice that every $s \in J$ is on a slope of $\kappa$. It follows that $B_\kappa^I=R\otimes_{R^J}{}^JB_\kappa^I$.

We can regard $R(D)$ (resp. $R(C)$) as a subtableau, and define $x_{R(D)}$ (resp. $x_{R(C)}$) to be the corresponding element in $W_{\tilde{J_D}}^{J_D}\cug W$ ( resp. $W_{\tilde{J_C}}^{J_C}\cug W$).
Consider the following complex: 
\[E:=E_{x_{R(C)}}\otimes_R E_{x_{R(D)}}\otimes_R E_\kappa^I\in \calC^b(\sbim^I).\] In general the complex $E$ is not minimal, and the minimal complex of $E$ is $E_\mu^I$.
For any $i\geq 0$, the bimodule ${}^{-i}E$ is a direct sum of bimodules of the form \[B_{y_{R(C)}}B_{y_{R(D)}}B_{\kappa'}^I(-i),\qquad\text{ with }\quad y_{R(C)}\leq x_{R(C)},\; y_{R(D)}\leq x_{R(D)}\text{ and }\kappa'\leq \kappa.\]
 
Consider the following subcomplex of $E$:
\[E_1:=E_{x_{R(C)}}\otimes_R E_{x_{R(D)}}\otimes_R B_\kappa^I\in \calC^b(\sbim^I).\]
Since $J_D$ and $J_C$ are distant, we have $E^J_{x_{R(D)}}\cong (E^{J_D}_{x_{R(D)}})_J\in \calC^b(\sbim^J)$. Moreover, there exists a complex of $(R^{J_C},R^J)$-bimodules ${}^{J_C}E_{x_{R(D)}}^J$ such that 
\[E_{x_{R(D)}}^J\cong R\otimes_{R^{J_C}}{}^{J_C}E_{x_{R(D)}}^J\in \calC^b(\sbim^J).\]
In fact, for any $i$ we have 
\[{}^{-i}E^J_{x_{R(D)}}\cong \bigoplus_{k} B^J_{z_k}(-i),\]
with $z_k\in W_{\tilde{J}_D}^J=W_{\tilde{J}_D}^{J_D}$ and since for any $z_k\leq x_{R(D)}$ we have $z_kW_J=W_{J_C}z_kW_J$, by \Cref{piusing} there exists an indecomposable bimodule ${}^{J_C}B_{z_k}^J$ such that $B_{z_k}^J\cong R\otimes_{J_C}{}^{J_C}B_{z_k}^J$. Moreover, since by \cite[Theorem 7.4.1]{W4} we have
\[\Hom({}^{J_C}B_{z_k}^J, {}^{J_C}B_{z_k}^J)\cong \Hom(B_{z_k}^J, B_{z_k}^J),\]
 we can construct a complex ${}^{J_C}E^J_{x_{R(D)}}$ such that $R\otimes_{R^{J_C}}{}^{J_C}E_{x_{R(D)}}^J=E^J_{x_{R(D)}}$ and  
	\[{}^{-i}\left({}^{J_C}E^J_{x_{R(D)}}\right)\cong \bigoplus_{k} {}^{J_C}B^J_{z_k}(-i).\]
	
Consider now the complex:
\[E_2:=E_{x_{R(C)}}^{J_C}\otimes_{R^{J_C}} {}^{J_C}E_{x_{R(D)}}^{J}\otimes_{R^J} {}^JB_\kappa^I\in \calC^b(\sbim^I).\]
We have a decomposition $E_1\cong E_2\oplus T$, with $T$ a contractible summand in $\calC^b(\sbim^I)$. By \Cref{subcomplex}, since $E_1$ is a subcomplex of $E$ we can find a decomposition of complexes $E\cong \tilde{E}\oplus T$ such that $E_2$ is a subcomplex of $\tilde{E}$.
So we have $E\cong \tilde{E}\in \calK^b(\sbim^I)$ and the minimal complex of $\tilde{E}$ is again $E_\mu^I$.
For any $i$ we have
${}^{i}\tilde{E}={}^iE_2\oplus ({}^iE/{}^iE_1)$. Notice that $({}^iE/{}^iE_1)$ is a direct sum of bimodules of the form $B_{y_{R(C)}}B_{y_{R(D)}}B_{\kappa'}^I(i)$ with $\kappa'<\kappa$. In particular, by \Cref{valley}(i), all the direct summands of $({}^iE/{}^iE_1)$ are of the form $B^I_\eta(m)$, with $\eta \not \geq \kappa$.

We can now go back to the original question.
Consider the indecomposable bimodule $B_\nu^I(-2)$: it is a direct summand of ${}^{-2}E_\mu^I$ and therefore of ${}^{-2}\tilde{E}$. 

Let $x_{R(D)-D}$ be the element in $W_{\tilde{J_D}}^{J_D}$ corresponding to the tableau $R(D)-D$. Define $x_{R(C)-C}$ similarly.

\begin{claim}
	The indecomposable bimodule $B_\nu^I(-2)$ is a direct summand of 
	\[B_{x_{R(C)-C}}^{J_C}\otimes_{R^{J_C}}{}^{J_C}B_{x_{R(D)-D}}^{J}\otimes_{R^J} {}^JB_{\kappa}^I(-2)\cus {}^{-2}E_2\cus {}^{-2}\tilde{E}.\]
\end{claim}
\begin{proof}[Proof of the claim.]
 Since $\nu\geq \kappa$,  $B_\nu^I(-2)$ is not a summand of ${}^iE/{}^iE_1$. Hence, it must be a summand of ${}^{-2}E_2$. Furthermore, we see from \Cref{crucial} that the only summand of ${}^{-2}E_2$ containing $B_\nu^I(-2)$ is $B_{x_{R(C)-C}}^{J_C}\otimes_{R^{J_C}}{}^{J_C}B_{x_{R(D)-D}}^{J}\otimes_{R^J} {}^JB_{\kappa}^I(-2)$.
\end{proof}

Similarly, we have 
%\[B_{\mu-C}^I(-1)\cus B_{x_{R(C)-C}}^{J_C}\otimes_{R^{J_C}}{}^{J_C}B_{x_{R(D)}}^{J}\otimes_{R^J} {}^JB_{\kappa}^I(-1)\cus {}^{-1}E_2\cus {}^{-1}\tilde{E},\]
\[B_{\lambda}^I(-1)=B_{\mu-D}^I(-1)\cus B_{x_{R(C)}}^{J_C}\otimes_{R^{J_C}}{}^{J_C}B_{x_{R(D)-D}}^{J}\otimes_{R^J} {}^JB_{\kappa}^I(-1)\cus {}^{-1}E_2\cus {}^{-1}\tilde{E}.\]

In $E_{x_{R(C)}}^{J_C}$ the differential $d_{-1}^{x_{R(C)}}$ induces a non-trivial map between the summands $B_{x_{R(C)-C}}^{J_C}(-1)\cus {}^{-1}E_{x_{R(C)}}^{J_C}$ and $B_{x_{R(C)}}^{J_C}={}^0E_{x_{R(C)}}^{J_C}$. By \Cref{firstdiff}, we can assume that this map is $f_C$. It follows that in the complex $\tilde{E}$ there is a non-trivial map 
\[ f_C \otimes \Iden \otimes \Iden: B_{x_{R(C)-C}}^{J_C}\otimes_{R^{J_C}}{}^{J_C}B_{x_{R(D)-D}}^J\otimes_{R^J}{}^JB_{\kappa}^I(-2)\raw B_{x_{R(C)}}^{J_C}\otimes_{R^{J_C}}{}^{J_C}B_{x_{R(D)-D}}^J\otimes_{R^J}{}^JB_{\kappa}^I(-1).\]

\begin{claim}\label{claim2}
For any choice of the summands 
\begin{align*}B_{\nu}^I\cus B_{x_{R(C)-C}}^{J_C}&\otimes_{R^{J_C}}{}^{J_C}B_{x_{R(D)-D}}^{J}\otimes_{R^J} {}^JB_{\kappa}^I\\
B_{\lambda}^I\cus  B_{x_{R(C)}}^{J_C}&\otimes_{R^{J_C}}{}^{J_C}B_{x_{R(D)-D}}^{J}\otimes_{R^J} {}^JB_{\kappa}^I\end{align*}
the map $f_C \otimes \Iden\otimes \Iden$ induces a non-trivial map between $B_{\nu}^I(-2)$ and $B_{\lambda}^I(-1)$.
\end{claim}
\begin{proof}[Proof of the claim.]
Assume that there exists a decomposition such that the induced map $B_{\nu}^I(-2)\raw B_{\lambda}^I(-1)$ vanishes. Because of \Cref{crucial}, this means that $ f_C \otimes \Iden\otimes \Iden$ would vanish modulo terms smaller than $\nu$. Recall the support functor $\Gamma^I_{<\nu}$ from \cite[\S 4.1]{Pat4}.
Then we have
\[f_C({1^\otimes})\otimes{1^\otimes} \otimes {1^\otimes} =(f_C \otimes \Iden\otimes \Iden)({1^\otimes}\otimes {1^\otimes}\otimes {1^\otimes})\in \Gamma_{<
	\nu}^I\left(B_{x_{R(C)}}^{J_C}\otimes_{R^{J_C}}{}^{J_C}B_{x_{R(D)-D}}^{J}\otimes_{R^J} {}^JB_{\kappa}^I\right),\]
which is impossible since $f_C({1^\otimes})\not \in \Gamma^{J_C}_{<x_{R(C)-C}}(B^{J_C}_{x_{R(C)}})$.
\end{proof}

Now, $B_\nu^I(-2)$ and $B_\lambda^I(-1)$ both occur with multiplicity $1$ resp. in ${}^{-2}\tilde{E}$ and ${}^{-1}\tilde{E}$ and the map between them is not trivial.
It follows from \Cref{claim2} that for any choice of the minimal complex $E_{\mu}^I\cus \tilde{E}$, the map induced by the differentials between the summands $B_\nu^I(-2) \cug E_{\mu}^{-2}$ and $B_{\lambda}^I(-1)\cug E_{\mu}^{-1}$ must be non-trivial.
\end{proof}

\Cref{distantprop} allows us to finally prove \Cref{distantDp}.

\begin{proof}[Proof of \Cref{distantDp}.]
Consider the Rouquier complex $E_{\lambda}^I$. The summand $B_{\nu}^I(-2)$ occurs in ${}^{-2}E_{\lambda}^I$ while $B_{\lambda-D}^I(-1)$ and $B_{\lambda-C}^I(-1)$ occur in ${}^{-1}E_\lambda^I$. As shown above the maps between these summands are non-trivial, i.e. there exists $c_1,c_2\in \bbQ^*$ such that the first two terms of Rouquier complex $E_\lambda^I$ look like 

\begin{center}
\begin{tikzpicture}
	\node (-2) at (0,3) {${}^{-2}E_\lambda^I$};
	\node (-1) at (3,3) {${}^{-1}E_\lambda^I$};
	\node (0) at (6,3) {${}^{0}E_\lambda^I$};
	\node (-3) at (-3,3) {${}^{-3}E_\lambda^I$};
	\node (1) at (9,3) {${}^{-3}E_\lambda^I$};
	\node at (0,2.5) {\rotatebox{90}{$=$}};
	\node at (3,2.5) {\rotatebox{90}{$=$}};
	\node at (6,2.5) {\rotatebox{90}{$=$}};
	\node at (-3,2.5) {\rotatebox{90}{$=$}};
	\node (a) at (0,0) {$B_{\nu}^I(-2)$};
	\node (b1) at (3,1) {$B_{\lambda-D}^I(-1)$};
	\node (b2) at (3,-1) {$B_{\lambda-C}^I(-1)$};
	\node (b3) at (3,-3) {$\vdots$};
	\node (c) at (6,0) {$B_\lambda^I$};
	\node at (3,0) {$\oplus$};
	\node at (3,-2) {$\oplus$};
	\node at (0,-1) {$\oplus$};
	\node at (0,-2) {$\vdots$};

	\node (d) at (-3,0) {$\ldots$};
	\node (e) at (9,0) {$0$};

\path[->] (-3) edge node[above] {$d_{-3}^\lambda$} (-2);
\path[->] (-2) edge node[above] {$d_{-2}^\lambda$} (-1);
\path[->] (-1) edge node[above] {$d_{-1}^\lambda$} (0);
\path[->] (0) edge node[above] {$d_{-0}^\lambda$} (1);
 \path[->] (a) edge node[above] {$c_1\cdot f_D$}(b1);
 \path[->] (a) edge node[above] {$c_2\cdot f_C$}(b2);
 \path[dashed, ->] (a) edge (b3);
 \path[->] (b1) edge node[above] {$f_C$}(c);
 \path[->] (b2) edge node[above] {$f_D$}(c);
 \path[dashed, ->] (b3) edge (c);
	\path[->] (d) edge (a);
	\path[->] (c) edge (e);
\end{tikzpicture}
\end{center}

Since $d^\lambda_{-2}\circ d^{\lambda}_{-1}=0$ and since $\{D,C\}$ is the only Dyck partition of $\calA(\nu,\lambda)$ with two elements, it follows that $\dim \Hom^2_{\not< \nu}(B_{\nu}^I,B_\lambda)=1$ and
\[f_D\circ f_C=\frac{c_2}{c_1}f_C\circ f_D\in \Hom^2_{\not< \nu}(B_{\nu}^I,B_\lambda).\qedhere\]
\end{proof}

\subsection{The construction of the basis}

Let $\lambda<\mu$ be two paths and let $\bfP=\{D_1,D_2,\ldots,D_k\}\in \Conf^1(\lambda,\mu)$.

\begin{definition}\label{admissible}
	We say that an ordering $(D_1,D_2,\ldots,D_k)$ of $\bfP$ is \emph{admissible} if for any $j$ the Dyck strip $D_j$ can be added to $\lambda+D_1+\ldots+D_{j-1}$. We denote by $\adm(\bfP)$ the set of admissible orderings of $\bfP$. 
\end{definition}

To any admissible ordering $o=(D_{\sigma(1)},\ldots,D_{\sigma(k)})\in \adm(\bfP)$ we can associate 
 a map $f_{\bfP,o}=f_{D_{\sigma(k)}}\circ f_{D_{\sigma(k-1)}}\circ \ldots\circ f_{D_{\sigma(1)}}:B_\lambda^I\raw B_\mu^I$. In general, the map $f_{\bfP,o}$ will depend on the choice of the order $o$. The goal of this section is to show that, fixing for any Dyck partition $\bfP$ an order $o\in \adm(\bfP)$, the set $\{f_{\bfP,o}\}_{\bfP\in \Conf^1(\lambda,\mu)}$ gives a basis of $\Hom_{\not<\mu}(B_\lambda^I,B_\mu^I)$.

Given a Dyck partition $\bfP=\{D_1,D_2,\ldots,D_k\}\in \Conf^1(\mu,\lambda)$ we call $\bfP(h)$ the subset of Dyck strips in $\bfP$ of height $h$. Notice that all the strips in $\bfP(h)$ are pairwise distant.

Since $\bfP$ is of type 1, if $\hgt(D_i)>\hgt(D_j)$ then there is no box in $D_j$ which is North, NW or NE of a box in $D_i$. It follows that any ordering $(D_{\sigma(1)},D_{\sigma(2)},\ldots,D_{\sigma(k)})$ for which $\hgt(D_{\sigma(i)})\leq \hgt(D_{\sigma(i+1)})$ for all $i$ is admissible.

\begin{definition}
	Let $\bfP\in \Conf^1(\lambda,\mu)$ and $o\in \adm(\bfP)$. We define $o_{\hgt}$ to be the order given by taking first all the strips of height $1$ (in the same order as they occur in $o$), then all the strips of height $2$
	and so on. We have $o_{\hgt}\in \adm(\bfP)$.
\end{definition}

\begin{lemma}\label{orderedheight}
Let $\bfP\in \Conf^1(\mu,\lambda)$ and let $o\in \adm(\bfP)$. We denote by $o(h)$ the restriction of $o$ to $\bfP(h)$. Then there exists $c\in \bbQ^*$ such that
\[f_{\bfP,o}=c\cdot f_{\bfP(m),o(m)}\circ f_{\bfP(m-1),o(m-1)}\circ \ldots \circ f_{\bfP(1),o(1)}= c\cdot f_{\bfP,o_{\hgt}}.\]
\end{lemma}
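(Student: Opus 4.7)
The plan is a bubble-sort reduction from $o$ to $o_{\hgt}$: as long as there exist two consecutive strips $D_i, D_{i+1}$ in the current order with $\hgt(D_i)>\hgt(D_{i+1})$, I will swap them, verifying at each step that (i) the swap produces another admissible order and (ii) it multiplies the associated morphism by a nonzero rational scalar. Because $o_{\hgt}$ by definition preserves the relative order of strips of the same height, swaps within a single height are never needed. After finitely many transpositions (bounded by the number of inversions of $o$ relative to height) we reach $o_{\hgt}$, and the product of the nonzero scalars collected along the way provides the desired $c\in \bbQ^*$.

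For the scalar change in (ii), I will invoke the first bullet of Lemma \ref{distantDp}. Its hypothesis requires that $D_i\sqcup D_{i+1}$ not be realizable as the difference of two Dyck strips; as noted in the paragraph preceding Lemma \ref{distantDp}, such a representation can exist only when $\hgt(D_i)=\hgt(D_{i+1})$. Since in our swap step the heights strictly differ, the hypothesis is automatic, and the lemma yields $c'\in \bbQ^*$ with $f_{D_i}\circ f_{D_{i+1}}=c'\cdot f_{D_{i+1}}\circ f_{D_i}$. Substituting into the composition defining $f_{\bfP,o}$ and comparing with that of the transposed order $o'$ gives $f_{\bfP,o}=c'\cdot f_{\bfP,o'}$.

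The main geometric content is in (i). Set $\kappa:=\lambda+D_1+\cdots+D_{i-1}$. I need to show both that $D_{i+1}$ is addable to $\kappa$ and that $D_i$ is then addable to $\kappa+D_{i+1}$. The key observation is that, since $\bfP$ is of Type 1, no box of $D_i$ can lie just above a box of $D_{i+1}$: if it did, the Type 1 rule would force every box just above $D_{i+1}$ to belong to $D_i$, so $D_i$ would rest directly on top of $D_{i+1}$, and admissibility of $o$ would require $D_{i+1}$ to be added before $D_i$, contradicting the order in $o$. The symmetric configuration ($D_{i+1}$ above $D_i$) is excluded by the height inequality $\hgt(D_i)>\hgt(D_{i+1})$, since a strip sitting directly on top of another has strictly greater height. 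A short geometric case check rules out the remaining potentially problematic configurations (NE, NW, SE, SW corner adjacency): after adding $D_i$ to $\kappa$, the peak created at the top of $D_i$ produces a local profile in $\kappa+D_i$ which is incompatible with the valley that would be needed for the corner-adjacent $D_{i+1}$ to be addable, again contradicting admissibility of $o$. Consequently $D_i$ and $D_{i+1}$ are distant in the sense of Section \ref{overlyingSec}, their bottom boundaries lie in disjoint portions of $\kappa$, and each strip is individually addable to $\kappa$, establishing (i).

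The main technical obstacle will be the geometric case analysis in (i), verifying carefully that every possible adjacency between two Dyck strips of different heights within a Type 1 partition is incompatible with the admissibility of $o$. Once this is in hand, the bubble sort runs unambiguously, each swap replaces $o$ by another admissible order and introduces a nonzero scalar via Lemma \ref{distantDp}, and the accumulated product yields the required $c\in \bbQ^*$.
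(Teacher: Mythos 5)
Your bubble-sort strategy---locate an adjacent out-of-order pair with strictly decreasing heights, transpose, and invoke the first bullet of Lemma \ref{distantDp}---is exactly the route the paper takes, and your reduction to such adjacent transpositions is correct.

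The issues are in your justification of step (i), and they stem from not invoking the observation the paper records immediately before defining $o_{\hgt}$: in a Type 1 partition, $\hgt(D_i)>\hgt(D_j)$ forces that \emph{no} box of $D_j$ lies just above, NW, or NE of a box of $D_i$. Your substitute argument has two weak points. First, ``a strip sitting directly on top of another has strictly greater height'' is not true for arbitrary Dyck strips: a single box of $D_{i+1}$ just above a box of $D_i$ does not control $\hgt(D_i)$; one must use Type 1 to upgrade ``some box above'' to ``every box above a box of $D_i$, including those above $D_i$'s endpoints at height $\hgt(D_i)$,'' which then gives $\hgt(D_{i+1})\geq\hgt(D_i)+2$. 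Second, your corner-adjacency discussion has the logic backwards for the NW/NE cases: adding $D_i$ \emph{creates} new valleys precisely at the NW and NE corners of its endpoint boxes, so NW/NE adjacency of $D_{i+1}$ would not make $D_{i+1}$ inaddable afterward (it would be exactly what enables it) and hence does not ``contradict admissibility of $o$.'' Those positions must instead be excluded by the cited Type 1 observation. The clean argument is: the observation rules out boxes of $D_{i+1}$ just above, NW, or NE of $D_i$; admissibility of $o$ rules out boxes of $D_{i+1}$ just below $D_i$ (they would block $D_i$ from being added to $\kappa$ first); hence the strips are distant, and since the new valleys of $\kappa+D_i$ sit at height $\hgt(D_i)$, strictly above every box of $D_{i+1}$ (all of height $\leq\hgt(D_{i+1})<\hgt(D_i)$), the addability of $D_{i+1}$ is unaffected by whether $D_i$ was added --- so the transposed order is admissible. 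With that repair your bubble sort closes.
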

\begin{proof}
Let $o_{\hgt}=(D_1,\ldots,D_k)$ and $o=(D_{\sigma(1)},D_{\sigma(2)},\ldots, D_{\sigma(k)})$. Assume that $o\neq o_{\hgt}$, i.e. that $\sigma$ is not the trivial permutation.
 Then, there exists an index $j$ such that $\hgt(D_{\sigma(j)})>\hgt(D_{\sigma(j+1)})$. Since $o\in \adm(\bfP)$, this implies that the strips $D_{\sigma(j)}$ and $D_{\sigma(j+1)}$ can be added in any order to $\mu+D_{\sigma(1)}+\ldots+D_{\sigma(j-1)}$, so in particular they are distant. Hence the ordering \[o':=(D_{\sigma(1)},\ldots D_{\sigma(j-1)},D_{\sigma(j+1)},D_{\sigma(j)},D_{\sigma(j+2)},\ldots D_{\sigma(k)})\]
 is also admissible.

By induction on the length of the permutation $\sigma$ we can assume $f_{\bfP,o'}=c'f_{\bfP,o_{\hgt}}$ for some $c'\in \bbQ^*$. Moreover, since $\hgt(D_{\sigma(j)})\neq \hgt(D_\sigma(j+1))$, by \Cref{distantDp}.\ref{dist1}), we have $f_{\bfP,o}=c'' f_{\bfP,o'}$ for some $c''\in \bbQ^*$.
\end{proof}

\subsubsection{A partial order on Dyck partitions}
\begin{definition}\label{PartialOrder}
Let $\lambda,\mu$ be paths such that $\lambda\leq \mu$ and let $\bfP,\bfQ\in \Conf^1(\lambda,\mu)$. For $h\in \bbN$, we say that $\bfP(h)$ is \emph{finer} that $\bfQ(h)$ if $\bfP(h)\neq \bfQ(h)$ and for any strip in $\bfP(h)$ there exists a strip in $\bfQ(h)$ containing it.

 We write $\bfP\succ \bfQ$ if there exists an integer $h>0$ such that $\bfP(j)=\bfQ(j)$ for any $j>h$ and $\bfP(h)$ is finer than $\bfQ(h)$.
\end{definition}

\begin{lemma}
The relation $\succ$ defines a partial order on Dyck partitions. 
\end{lemma}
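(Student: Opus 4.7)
The plan is to verify that $\succ$ is irreflexive and transitive, which together yield a strict partial order (antisymmetry then follows formally). Irreflexivity is immediate from the definition: $\bfP \succ \bfP$ would require $\bfP(h) \neq \bfP(h)$ at some height $h$, which is absurd.

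For transitivity, suppose $\bfP \succ \bfQ$ is witnessed by a height $h_1$ and $\bfQ \succ \bfR$ by a height $h_2$. I would split into two cases. First, assume $h_1 \neq h_2$, and WLOG $h_1 > h_2$. Then for every $j > h_1$ one has $\bfP(j)=\bfQ(j)=\bfR(j)$ (both equalities holding because $j > h_1 > h_2$), and moreover $\bfQ(h_1)=\bfR(h_1)$ since $h_1 > h_2$. Hence $\bfP(h_1)$ is still strictly finer than $\bfR(h_1)$, giving $\bfP \succ \bfR$ via $h_1$. This case is essentially bookkeeping.

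The substantive case is $h_1 = h_2 = h$. Agreement $\bfP(j) = \bfR(j)$ for $j > h$ is immediate. The key step is to show that $\bfP(h)$ is strictly finer than $\bfR(h)$. The containment part (each strip of $\bfP(h)$ is contained in some strip of $\bfR(h)$) composes automatically through $\bfQ(h)$. The strictness $\bfP(h) \neq \bfR(h)$ is the main obstacle, since \emph{a priori} a strict refinement could factor through a strict refinement and return to the original. To rule this out I would argue by contradiction: assume $\bfP(h) = \bfR(h)$. For any $D \in \bfR(h) = \bfP(h)$, since $\bfP(h)$ refines $\bfQ(h)$ there is $C \in \bfQ(h)$ with $D \subseteq C$, and since $\bfQ(h)$ refines $\bfR(h)$ there is $D' \in \bfR(h)$ with $C \subseteq D'$. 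Because the strips in $\bfR(h)$ are pairwise disjoint as sets of boxes and $D$ is nonempty, $D \subseteq D'$ forces $D = D'$, and squeezing gives $D = C \in \bfQ(h)$; hence $\bfR(h) \subseteq \bfQ(h)$. The symmetric argument starting from an arbitrary $C \in \bfQ(h)$ yields $\bfQ(h) \subseteq \bfR(h)$, so $\bfQ(h) = \bfR(h)$, contradicting $\bfQ \succ \bfR$. This completes transitivity.

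The genuine content thus lies in verifying that ``strictly finer than'' is a strict partial order on partitions by Dyck strips of a fixed region at a fixed height; here the disjointness of strips within a single Dyck partition is the essential ingredient that prevents the strict refinement relation from cycling.
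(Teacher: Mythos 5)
Your proof is correct and takes essentially the same approach as the paper's (comparing the witnessing heights $h_1$ and $h_2$ for the two relations, and checking the three cases $h_1 > h_2$, $h_1 < h_2$, $h_1 = h_2$; antisymmetry then follows formally, as the paper also notes by directly ruling out $\bfP \succ \bfQ \succ \bfP$). The one place you go beyond the paper's exposition is the case $h_1 = h_2$: the paper simply asserts that $\bfP(h)$ is again \emph{strictly} finer than $\bfR(h)$, whereas you supply the justification that two successive strict refinements cannot compose to an equality, using pairwise disjointness of strips within a single Dyck partition to force the squeezed containments to collapse — a worthwhile detail to make explicit.
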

\begin{proof}
Assume $\bfP\succ \bfQ$ and $\bfQ\succ \bfP$. Then, for the largest index $h$ for which $\bfP(h)$ and $\bfQ(h)$ differ we would have that $\bfP(h)$ is finer than $\bfQ(h)$ and $\bfQ(h)$ is finer that $\bfP(h)$. This is a contradiction.

Assume $\bfP\succ \bfQ$ and $\bfQ\succ \bfR$. Let $h$ be such that $\bfP(j)=\bfQ(j)$ for all $j>h$ and $\bfP(h)$ is finer than $\bfQ(h)$. Let $k$ be such that $\bfQ(j)=\bfR(j)$ for all $j>k$ and $\bfQ(k)$ is finer than $\bfR(k)$. 

We have $\bfP(j)=\bfR(j)$ for any $j>\max\{h,k\}$.
If $k=h$ then $\bfP(h)$ is also finer than $\bfR(h)$. If $k>h$ then $\bfP(k)=\bfQ(k)$ is finer than $\bfR(k)$. If $k<h$ then $\bfP(h)$ is finer than $\bfQ(h)=\bfR(h)$. In any case we have $\bfP\succ \bfR$.
\end{proof}

\subsubsection{Structure of the proof}

For any two paths $\lambda,\mu$ with $\lambda \leq \mu$ and for any Dyck partition $\bfP\in \Conf^1(\lambda,\mu)$ we arbitrarily fix an admissible order $o_\bfP\in \adm(\bfP)$.

\begin{notation}
	In order to avoid a too cumbersome notation we write $f_\bfP$ instead of $f_{\bfP,o_\bfP}$ where $o_\bfP\in \adm(\bfP)$ is the order that we have just fixed.
\end{notation}

Our main goal is to show that the following.
\begin{theorem}\label{mainthm}
	Let $\lambda,\mu$ be paths with $\lambda\leq \mu$.
	The set $\{f_\bfP\}_{\bfP\in \Conf^1(\lambda,\mu)}$ is a basis of $\Hom_{\not< \lambda}(B_\lambda^I,B_\mu^I)$ as a left $R$-module.
\end{theorem}

The proof of \Cref{mainthm} will be achieved via a ``double induction'' on the following statements. (We list how these imply one another below \Cref{Yrmk} in
\eqref{Astate} and \eqref{Bstate}.)
Let $\lambda$ and $\mu$ denote paths with $\lambda\leq \mu$ and $\bfP\in \Conf^1(\lambda,\mu)$.

\begin{labeling}{$B(\lambda,\mu,D,\bfR):=$}
	\item[$A(\lambda,\mu,\bfP):=$] For any $o'\in \adm(\bfP)$ we have
 \[\displaystyle		f_{\bfP,o'}\in cf_\bfP+\spa \langle f_{\bfQ} \mid \bfQ\prec \bfP\rangle\subset \Hom_{\not< \lambda}(B_\lambda^I,B_\mu^I)\]
	for some $c\in \bbQ^*$.\footnote{Unless otherwise specified, by $\spa\langle J\rangle $ we denote the left $R$-module generated by the set $J$. }
	\item[$A(\lambda,\mu):=$]
 $A(\lambda,\mu,\bfP)$ holds for any $\bfP\in \Conf^1(\lambda,\mu)$.

\end{labeling}

Let now $D$ be a Dyck strip that can be removed from $\mu$ and $\bfR\in \Conf^1(\lambda,\mu-D)$.

\begin{labeling}{$B(\lambda,\mu,D,\bfR):=$}
	\item[$B(\lambda,\mu,D,\bfR):=$]
	For any $\bfP\in \Conf^1(\lambda,\mu)$ such that $D\in \bfP$ and $\bfP\setminus \{D\}\succeq \bfR$ and for any $o'\in \adm(\bfR)$ we have 
	\[f_D\circ f_{\bfR,o'}\in \spa \langle f_{\bfQ} \mid \bfQ\preceq \bfP \rangle\subset \Hom_{\not< \lambda}(B_\lambda^I,B_\mu^I).\]
	Moreover, if $\bfP\setminus \{D\}\succ \bfR$ then
	\[f_D\circ f_{\bfR,o'}\in \spa \langle f_{\bfQ} \mid \bfQ\prec \bfP \rangle\subset \Hom_{\not< \lambda}(B_\lambda^I,B_\mu^I).\]
	\item[$B(\lambda,\mu):=$]
	$B(\lambda,\mu,D,\bfR)$ holds for any $D$ that can be removed from $\mu$ and any $\bfR\in \Conf^1(\lambda,\mu-D)$.
\end{labeling}

Notice that $B(\lambda,\mu,D,\bfR)$ implies that if $\bfR\preceq \bfS\in \Conf^1(\lambda,\mu-D)$ and $\bfS\cup \{D\}$ is of type 1, then 
	\[f_D\circ f_{\bfR,o'}\in \spa \langle f_{\bfQ} \mid \bfQ\preceq \bfS\cup \{D\} \rangle.\]
Moreover, if $\bfR\prec \bfS$ we have
\[f_D\circ f_{\bfR,o'}\in \spa \langle f_{\bfQ} \mid \bfQ\prec \bfS\cup \{D\} \rangle.\]

In \Cref{isabasis} we show that \Cref{mainthm} follows from $A(\lambda,\mu')$ for all $\mu'\leq \mu$. Hence, it is enough to show the statement $A(\lambda,\mu)$ for every pair of weights $\lambda,\mu$.
We  define two auxiliary sets of Dyck partitions.

\begin{definition}
Let $\lambda,\mu$ be paths and $D$ be a Dyck strip that can be removed from $\mu$ and such that $\lambda\leq \mu-D$. For $\bfR\in \Conf^1(\lambda,\mu-D)$ we define the following sets:
\[X(\bfR,D):=\{ \bfP\in \Conf^1(\lambda,\mu)\;\mid\; D\in \bfP\text{ and }\bfP\setminus \{D\}\succ \bfR\}\]
\[Y(\bfR,D):=\{ \bfQ\in \Conf^1(\lambda,\mu)\;\mid\; \bfQ\prec \bfP \text{ for all }\bfP\in X(\bfR,D)\}.\]
\end{definition}
We write $\bfP\succ Y(\bfR,D)$ if $\bfP\succ \bfQ$ for any $\bfQ\in Y(\bfR,D)$. Notice that if $\bfP\in X(\bfR,D)$, then $\bfP\succ Y(\bfR,D)$. Moreover, if $\bfR\prec \bfS$ then $Y(\bfR,D)\cug Y(\bfS,D)$.

\begin{remark}\label{Yrmk}
 The statement $B(\lambda,\mu,D,\bfR)$ implies that
	\begin{equation}\label{YB}f_D\circ f_{\bfR,o'}\in \spa \langle f_{\bfQ} \mid \bfQ\in Y(\bfR,D) \rangle\subset \Hom_{\not< \lambda}(B_\lambda^I,B_\mu^I).\end{equation}
	Moreover, if $\bfR\cup\{D\}$ is not of type 1 then $B(\lambda,\mu,D,\bfR)$ is equivalent to $\eqref{YB}$.
\end{remark}

In \Cref{Aproof} we show that for any paths $\lambda,\mu$ with $\lambda\leq \mu$ and $\bfP\in \Conf^1(\lambda,\mu)$ we have\\
\begin{minipage}{0.65\textwidth}
	\begin{enumerate}[i)]
	\item $A(\lambda,\mu')$ for all $\mu'<\mu$
\item $B(\lambda,\mu')$ for all $\mu'< \mu$
\item $A(\lambda,\mu,\bfQ)$ for all $\bfQ\in \Conf^1(\lambda,\mu)$ such that $\bfQ\prec \bfP$
\item $B(\lambda,\mu,D,\bfR)$ for all Dyck strips $D$ with $\hgt(D)=\hgt(\lambda,\mu)$ that can be removed from $\mu$ and for all $\bfR\in \Conf^1(\lambda,\mu-D)$ such that $\bfP\succ Y(\bfR,D)$
\end{enumerate}
\end{minipage} \begin{minipage}{0.35\textwidth}
	\begin{equation}\label{Astate}
\begin{tikzpicture}
	\draw [decorate,decoration={brace,amplitude=10pt,mirror,raise=4pt},yshift=0pt]
	(0,-2) -- (0,2);
	\draw node at (2,0) {$\implies A(\lambda,\mu,\bfP)$};

\end{tikzpicture}
\end{equation}
\end{minipage}
and in \Cref{Bproof} we show that, if $D$ a Dyck strip that can be removed from $\mu$, for any   $\bfR\in\Conf^1(\lambda,\mu-D)$ we have\\
\begin{minipage}{0.65\textwidth}
\begin{enumerate}[i)]
	\item $A(\lambda,\mu')$ for all $\mu'<\mu$
	\item $B(\lambda,\mu')$ for all $\mu'<\mu$
	\item $A(\lambda,\mu,\bfQ)$ if $\bfQ\in Y(\bfR,D)$
	\item $B(\lambda,\mu,D,\bfQ)$ for all $\bfQ\in \Conf^1(\lambda,\mu-D)$ with $\bfQ\prec \bfR$
	\item $B(\lambda,\mu,D',\bfQ)$ for all Dyck strips $D'$ with $\hgt(D')>\hgt(D)$ and for all $\bfQ\in \Conf^1(\lambda,\mu-D')$
	\item $B(\lambda,\mu,D',\bfQ)$ for all Dyck strips $D'$ with $\hgt(D')=\hgt(D)$ and $\ell(D')>\ell(D)$ and for all $\bfQ\in \Conf^1(\lambda,\mu-D')$ such that $Y(\bfQ,D')\subseteq Y(\bfR,D)$
\end{enumerate}
\end{minipage}\begin{minipage}{0.35\textwidth}
\begin{equation}\label{Bstate}
	\begin{tikzpicture}
		\draw [decorate,decoration={brace,amplitude=10pt,mirror,raise=4pt},yshift=0pt]
		(0,-3) -- (0,3);
		\draw node at (2.5,0) {$\implies B(\lambda,\mu,D,\bfR).$};

	\end{tikzpicture}
\end{equation}
\end{minipage}

\begin{lemma}\label{howinductionworks}
	Assume that the inductive steps \eqref{Astate} and \eqref{Bstate} hold.  Then, $A(\lambda,\mu)$ and $B(\lambda,\mu)$ hold for all paths $\lambda,\mu$ with $\lambda\leq \mu$.
\end{lemma}
\begin{proof}
	It is easy to see from \eqref{Astate} that $B(\lambda,\mu')$ for all $\mu'\leq \mu$ implies $A(\lambda,\mu')$ for all $\mu'\leq \mu$.

	Fix $\lambda$ and assume $B(\lambda,\mu)$ does not hold for some $\mu$.
	\begin{itemize}
		\item 	 Choose $\mu_0$ minimal such that $B(\lambda,\mu_0)$ does not hold. 
		\item Choose a pair $(\bfR_0,D_0)$ such that $B(\lambda,\mu_0,D_0,\bfR_0)$ does not hold for which $\hgt(D_0)$ is maximal.
		\item 	Among the pairs $(\bfR,D)$ such that $B(\lambda,\mu_0,D,\bfR)$ does not hold and $\hgt(D)=\hgt(D_0)$ choose $(\bfR_1,D_1)$ such that the set $Y(\bfR_1,D_1)$ is minimal.
		\item 	Among those pairs $(\bfR,D)$ for which $B(\lambda,\mu_0,D,\bfR)$ does not hold, $\hgt(D)=\hgt(D_1)$ and $Y(\bfR,D)=Y(\bfR_1,D_1)$ choose $(\bfR_2,D_2)$ such that $D_2$ is of maximal length.
	\end{itemize}
	Since $Y(\bfR_2,D_2)\supseteq Y(\bfR_2',D_2)$ if $\bfR_2\succ \bfR_2'$, we can choose $\bfR_2$ to be minimal among the partitions $\bfR\in \Conf^1(\lambda,\mu_0-D_2)$ for which $B(\lambda,\mu_0,D_2,\bfR)$ does not hold.
	
	By \eqref{Bstate}
	there exists $\bfQ\in Y(\bfR_2,D_2)$ such that $A(\lambda,\mu_0,\bfQ)$ does not hold. 	Let $\bfP$ be a minimal element in $Y(\bfR_2,D_2)$ for which $A(\lambda,\mu_0,\bfP)$ does not hold. Then by \eqref{Astate} there must exist $D'$ and $\bfR'$ with $\hgt(D')=\hgt(\lambda,\mu_0)\geq \hgt(D_2)$ and $Y(\bfR',D')\prec \bfP$ such that $B(\lambda,\mu_0,D',\bfR')$ does not hold. We have $Y(\bfR',D')\prec \bfP\in Y(\bfR_2,D_2)$ and $\bfP\not \in Y(\bfR',D')$, hence $Y(\bfR',D')\subset Y(\bfR_2,D_2)$. This contradicts the minimality of $Y(\bfR_1,D_1)$. It follows that $B(\lambda,\mu)$ and $A(\lambda,\mu)$ hold for all $\mu$.
\end{proof}

\begin{proof}[Proof of \Cref{mainthm}.]
In the next section, we will prove \eqref{Astate} and \eqref{Bstate} in \Cref{Aproof,Bproof}. It follows that \Cref{howinductionworks} holds without assumptions. Thus, $A(\lambda,\mu)$ and $B(\lambda,\mu)$ hold for any $\lambda$ and $\mu$ with $\lambda\leq \mu$. Finally, we conclude the proof by showing in  \Cref{isabasis} that
$A(\lambda,\mu')$ for all $\mu'\leq \mu$ implies that the set $\{f_\bfP\}_{\bfP\in \Conf^1(\lambda,\mu)}$ is a basis of $\Hom_{\not< \lambda}(B_\lambda^I,B_\mu^I)$.
\end{proof}

We can fully describe the Hom spaces between indecomposable objects in $\sbim^I$. For $\bfP\in \Conf^1(\lambda,\mu)$, we denote by $g_\bfP\in \Hom(B_\mu^I,B_\lambda^I)$ the map obtained by taking the flip of $f_\bfP$.
Recall the definition of a strictly object-adapted cellular (or SOACC) category from \cite[Definition 2.4]{ELa} (this is a rigidification of the original definition of cellular category given in \cite[Definition 2.1]{Wes}).

\begin{cor}\label{maincoro}	
	For any two paths $\lambda,\mu$ the set $\{f_\bfP\circ g_\bfQ\}$ where $\bfP\in \Conf^1(\lambda,\nu)$, $\bfQ\in \Conf^1(\nu,\mu)$ and $\nu\leq \lambda,\mu$ is a basis of $\Hom(B_\lambda^I,B_\mu^I)$ as a left $R$-module.
	
	The category $\sbim^I$ is a strictly object-adapted cellular category with cellular basis $\{f_\bfP\circ g_\bfQ\}$.
\end{cor}

\begin{proof}	
		Recall that, if $M$ is a finitely generated graded free $R$-module, every set $S\subset M$ of homogeneous elements which generates $M$ over $R$ and such that 
	\[\grrk M=\sum_{s\in S}v^{\deg(s)}\]
	is a $R$-basis of $M$.	So, by Soergel's Hom formula (\Cref{SHF}) and \Cref{SZJ}, it is enough to show that the set $\{f_\bfP\circ g_\bfQ\}$ generates $\Hom(B_\lambda^I,B_\mu^I)$. It follows directly from \Cref{mainthm} that, for $\nu\leq \lambda,\mu$ the subset $\{f_\bfP\circ g_\bfQ\}$, with $\bfP\in \Conf^1(\lambda,\nu)$, $\bfQ\in \Conf^1(\lambda,\nu)$, generates $\Hom_{\not< \nu}(B_\lambda^I,B_\mu^I)$. It is easy to see by induction on $\nu$ that the subset $\{f_\bfP\circ g_\bfQ\}$, with $\bfP\in \Conf^1(\lambda,\tau)$, $\bfQ\in \Conf^1(\lambda,\tau)$ and $\tau\leq \nu$ generates the subspace $\Hom_{\leq \nu}(B_\lambda^I,B_\mu^I)$. The first claim follows.
	
	The fact that $\sbim^I$ is  a strictly object-adapted cellular category with cellular basis $\{f_\bfP\circ g_\bfQ\}$ is immediate from the definitions.
\end{proof}

\subsubsection{The proofs of the inductive steps}

In this section we show the crucial Lemmas that are necessary to conclude he proof of \Cref{mainthm}. More precisely, we show \eqref{Astate} in \Cref{Aproof} and \eqref{Bstate} in \Cref{Bproof}. Moreover, we show in the following Lemma that \Cref{mainthm} follows $A(\lambda,\mu)$ for all paths $\lambda,\mu$.

\begin{lemma}\label{isabasis}
	Let $\lambda,\mu$ be paths with $\lambda\leq \mu$ and assume $A(\lambda,\mu')$ for all $\mu'\leq \mu$. Then $\{f_\bfP\}_{\bfP\in \Conf^1(\lambda,\mu)}$ is a left $R$-basis of $\Hom_{\not< \lambda}(B_\lambda^I,B_\mu^I)$.
\end{lemma}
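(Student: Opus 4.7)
The plan is to combine a graded-rank computation with the triangular structure provided by hypothesis $A(\lambda,\mu)$. By Soergel's Hom formula (Theorem~\ref{SHF}), Theorem~\ref{sconj}, and the standard support-filtration argument for singular Soergel bimodules, the graded rank of $\Hom^\bullet_{\not<\lambda}(B_\lambda^I,B_\mu^I)$ as a free left $R$-module is $h^I_{\lambda,\mu}(v)$, which by Theorem~\ref{SZJ} equals $\sum_{\bfP\in \Conf^1(\lambda,\mu)} v^{|\bfP|}$. Since each $f_\bfP$ is a composition of $|\bfP|$ degree-one morphisms, the graded cardinality of $\{f_\bfP\}$ matches this graded rank exactly, giving in total $N := |\Conf^1(\lambda,\mu)|$ elements.

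To upgrade the cardinality count to a basis statement I would use graded Nakayama. Let $\mathfrak m_R \subseteq R$ denote the positive-degree ideal and set $V := \Hom^\bullet_{\not<\lambda}(B_\lambda^I,B_\mu^I)/\mathfrak m_R \Hom^\bullet_{\not<\lambda}(B_\lambda^I,B_\mu^I)$. This is a graded $\bbQ$-vector space of total dimension $N$, and it suffices to show the images $\bar f_\bfP \in V$ are $\bbQ$-linearly independent: they then form a $\bbQ$-basis of $V$, so by Nakayama $\{f_\bfP\}$ generates $\Hom^\bullet_{\not<\lambda}$ as an $R$-module, and a surjection between free $R$-modules of the same rank $N$ is automatically an isomorphism. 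Pick a total order $\bfP_1 < \cdots < \bfP_N$ refining $\succ$, and filter $V$ by $V_i := \spa_\bbQ\{\bar f_{\bfP_j} : j \leq i\}$. By $A(\lambda,\mu,\bfP_i)$, any alternative admissible order yields $\bar f_{\bfP_i, o'} \in c\, \bar f_{\bfP_i} + V_{i-1}$ with $c \in \bbQ^*$; the required $\bbQ$-linear independence therefore reduces to strictness of the filtration: $\bar f_{\bfP_i} \notin V_{i-1}$ for every $i$.

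The main obstacle is precisely this strictness, which $A$ does not yield on its own: $A$ only shows that the class $\bar f_\bfP$ is well-defined modulo $\succ$-lower terms, not that it is nonzero. The natural attack is to evaluate on $1^\otimes \in B_\lambda^I$ and track $f_\bfP(1^\otimes) \in B_\mu^I$ via the explicit diagrammatic formulas of Section~\ref{diagramsec}: each $f_D(1^\otimes)$ has a known non-zero leading term in $B_{\lambda+D}^I$, and composing strip by strip starting from the highest level of $\bfP$ should produce a distinguished contribution characterizing $\bfP$ that no $\prec$-lower partition can match. A dual route uses the flipped morphisms $g_D$ (Remark~\ref{flipisok}) to construct companions $f_\bfP^\vee: B_\mu^I \to B_\lambda^I$ and pairs them with $f_\bfP$ via the intersection forms of Section~\ref{chap1}; by Proposition~\ref{adjoint} flipping realizes the adjoint, and the non-degeneracy of these forms, combined with Lemmas~\ref{distantDp} and~\ref{distneg} governing how $f$- and $g$-type morphisms commute across distant strips, should isolate $\bar f_\bfP$ modulo $V_{i-1}$. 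Once strictness is verified, graded Nakayama and the rank count finish the proof.
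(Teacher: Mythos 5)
Your framework—graded rank computation, then graded Nakayama to reduce to $\bbQ$-linear independence of the classes $\bar f_\bfP$ in $\Hom_{\not<\lambda}/\mathfrak m_R\Hom_{\not<\lambda}$, then a filtration indexed by a linear refinement of $\succ$—is logically sound, and you correctly identify that the content of $A(\lambda,\mu,\bfP)$ is well-definedness of $\bar f_\bfP$ modulo $\succ$-lower terms, not non-vanishing. But you have not proved the step you yourself flag as the obstacle: the strictness $\bar f_{\bfP_i} \notin V_{i-1}$. Both of the avenues you sketch (tracking $f_\bfP(1^\otimes)$ through the diagrammatic formulas, or pairing with the flipped morphisms $g_\bfP$ via the intersection form) are left at the level of ``should produce'' and ``should isolate'' without a worked argument. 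This is a genuine gap, not a detail: showing that a specific composition $f_{D_k}\circ\cdots\circ f_{D_1}$ does not collapse onto the $\prec$-span would require essentially the same control over compositions of degree-one morphisms as the hard part of the paper, and nothing in what you write actually achieves it.

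The paper avoids the independence question entirely by proving \emph{spanning} first, via the singular Rouquier complex $E_\mu^I$. It uses ${}^{-1}E_\mu^I\cong\bigoplus_{\nu\in M^1(\mu)}B_\nu^I(-1)$ together with \cite[Lemma 4.15]{Pat4} to write $\Hom_{\not<\lambda}(B_\lambda^I,B_\mu^I)$ as the span of all $f_{D_\nu}\circ f_\bfP$ with $\bfP\in\Conf^1(\lambda,\nu)$, $\nu\in M^1(\mu)$, and then proves by an intricate internal induction (Claim~\ref{Claimchesembravafacile}, using Lemmas~\ref{distantDp}, \ref{distneg}, and \ref{overlying} to commute strips past each other and dispose of the cases where $\{D\}\cup\bfP$ fails to be of Type~1) that every such composition already lies in $\spa\langle f_\bfQ : \bfQ\in\Conf^1(\lambda,\mu)\rangle$. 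Once spanning is in hand, the graded rank count you also invoke forces the generating set to be a basis, with independence coming for free. In short: your proposal runs the two implications in the opposite order (independence $\Rightarrow$ basis rather than spanning $\Rightarrow$ basis), which would also work if completed, but the completed half of your argument is the easy bookkeeping, while the technical heart—the step that in the paper is the Rouquier-complex spanning argument—is missing.
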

\begin{proof}
	The case $\mu=\lambda$ is clear.	By induction we can assume the statement for any $\mu'$ with $\lambda\leq \mu'<\mu$.
	First notice that $\Hom_{\not< \lambda}(B_\lambda^I,B_\mu^I)=0$ if $\mu\not\geq \lambda$.

	Consider the Rouquier complex $E_\mu^I$. 
	Let \[M^1(\mu)=\{\nu \in \Lambda_{n,i} \mid \exists\text{ Dyck strip }D_{\nu}\text{ with }\nu+D_{\nu}=\mu\},\]
	so that we have
	\[{}^{-1}E_\mu^I\cong \bigoplus_{\nu\in M^1(\mu)} B_{\nu}^I(-1).\]
	From \cite[Lemma 4.15]{Pat4} and the induction hypothesis we see that
	\begin{multline*}\Hom_{\not< \lambda}(B_\lambda^I,B_\mu^I)= \spa \left\langle f_{D_\nu}\circ g \mid \nu \in M^1(\mu),\; g\in \Hom_{\not< \lambda}(B_\lambda^I,B_\nu^I)\right\rangle=\\
= \spa \left\langle f_{D_\nu}\circ f_\bfP\mid \nu\in M^1(\mu),\; \bfP\in \Conf^1(\lambda,\nu)\right\rangle. 
	\end{multline*}
	
	Let $V:=\spa \left\langle f_\bfQ \mid \bfQ \in \Conf^1(\lambda,\mu)\right\rangle \cug \Hom_{\not< \lambda}(B_\lambda^I,B_\mu^I) $. We want to show that
	$\Hom_{\not< \lambda}(B_\lambda^I,B_\mu^I)= V$.
	This will follow from the next claim.
	
	\begin{claim}\label{Claimchesembravafacile}
		Let $D$ be a Dyck strip that can be removed from $\mu$ and let $\bfP\in \Conf^1(\lambda,\mu-D)$. Then 
		$f_D\circ f_\bfP\in V$.
	\end{claim}
	\begin{proof}[Proof of the claim.]
		By induction, we can assume that the claim holds for all the pairs $(D',\bfQ)$ satisfying one of the following conditions:
		\begin{itemize}
			\item $D'$ is any Dyck strip that can be removed from $\mu$ with $\hgt(D')>\hgt(D)$, or $\hgt(D')=\hgt(D)$ and $\ell(D')>\ell(D)$, and $\bfQ\in \Conf^1(\lambda,\mu-D')$
			\item $D'=D$ and $\bfQ\in \Conf^1(\lambda,\mu-D)$ with $\bfQ\prec \bfP$.
		\end{itemize}
	
	If $\{D\}\cup \bfP$ is of type 1, then by $A(\lambda,\mu,\bfP)$ we have	$ f_D\circ f_\bfP \in \spa\langle f_{\bfS} \mid \bfS\preceq \{D\}\cup \bfP\rangle\subset V$.
	Hence, we can assume $\{D\}\cup \bfP$ not of type 1. This means there exists a strip $C\in \bfP$ which contains a box South of a box in $D$ and such that $\hgt(C)\geq \hgt(D)-1$.
	Let $h:=\hgt(D)$. We divide the rest of the proof into three cases.
	
	\begin{itemize}
	\item	
	\textbf{There exists a strip $D'\in \bfP$ with $D'\neq C$ that can be removed from $\mu$ such that $\hgt(D')>h$.}
	
	 Then there exists an admissible order $\tilde{o}$ ending in the strip $D'$. Notice that $D$ and $D'$ are distant, hence by $A(\lambda,\mu-D)$ we have 
	\[f_{\bfP,o}-cf_{\bfP,\tilde{o}}\in \spa\langle f_\bfQ \mid \bfQ\prec \bfP \rangle\]
	for some scalar $c\in \bbQ$. By induction, we can assume $o=\tilde{o}$.
	By \Cref{distantDp}.\ref{dist1}), we have	$f_{D}\circ f_{D'}=c f_{D'}\circ f_{D}$ for some scalar $c\in \bbQ$. Since $\hgt(D')>\hgt(D)$ we conclude by induction. 	%Hence, we can assume that there are not such strips $D'$.
	
\item \textbf{$\hgt(C)\geq h$ and there does not exists a strip $C\neq D'\in \bfP$ that can be removed from $\mu$ with $\hgt(D')>h$.} 

	Since there is no Dyck strip $D'$ as above, we can remove $C$ from $\mu-D$ so there exists an admissible order $o'\in \adm(\bfP)$ ending in $C$. As before, we can assume by induction, that $o=o'$.
	By \Cref{overlying} there exist Dyck strips $C',D'$ with $C'\sqcup D'=C\sqcup D$, such that $\{C',D'\}$ is a partition of type 1 and that $f_D\circ f_C=f_{D'}\circ f_{C'}$ (see also \Cref{fromIItoI}).
	
	We have $f_{C'}\circ f_{\bfP\setminus\{C\},o'}\in \spa \langle f_{\bfR}\mid \bfR\in \Conf^1(\lambda,\mu-D')\rangle$.
	Since $\hgt(D')\geq \hgt(D)$ and $\ell(D')>\ell(D)$ from our induction hypothesis it follows that
	\[ f_D\circ f_C \circ f_{\bfP\setminus \{C\},o'}= f_{D'}\circ f_{\bfR}\in V.\]

\item \textbf{$\hgt(C)=h-1$ and there does not exists a strip $C\neq D'\in \bfP$ with $\hgt(D')>h$.} 
 By the assumption above there are no strips in $\bfP$ of height $>h$. There can be at most two strips $R_1,R_2\in \bfP$ with $\hgt(R_1)=\hgt(R_2)=\hgt(D)$ touching $C$, as in \Cref{touchingC}.
	\begin{figure}[!ht]
	\begin{center}
	\begin{tikzpicture}[x=\boxmini,y=\boxmini]
\tikzset{vertex/.style={}}
\tikzset{edge/.style={very thick}}
\tabpath{+,-,+,-,+,-,+,-,-,+,+,-,+,-}
\tabpath{-,-,+,-,-,-,+,+,-,+,+,-,+,+}
\tabpath{-,-,+,+,-,-,+,+,-,+,-}
\node at (6,-1) {$D$};
\node at (9,-2) {$C$};
\node at (2,-1) {$R_1$};
\node at (12,-1) {$R_2$};		
\end{tikzpicture}
	\end{center}
	\caption{In this example we have $\hgt(C)+1=\hgt(D)=\hgt(R_1)=\hgt(R_2)$. The strips $R_1$ and $R_2$ are of maximal height in $\bfP$.}\label{touchingC}
\end{figure}

	There exists an admissible order $o''\in \adm(\bfP)$ whose ending is $\ldots,C,R_2,R_1)$. By induction, it is enough to consider the case $o=o''$. As in \Cref{gybstrips}, we can find Dyck strips
	$S_2, S_T\subset C$ such that  both $T_1 = R_1 \sqcup D \sqcup T_2$ and $S_1 = R_2 \sqcup D \sqcup S_2$ are Dyck strips. 
	
\begin{figure}[!ht]	
	\begin{center}
		\begin{tikzpicture}[x=\boxmini,y=\boxmini]
		\draw[fill=yellow] (0,0) -- (1,1) -- (2,0) -- (3,1) -- (4,0) -- (5,1) -- (6,0) -- (7,1) -- (8,0) -- (6,-2) -- (5,-1) -- (4,-2) -- (3,-1) -- (2,-2) -- cycle;
		\draw[fill=blue!50] (4,0) -- (5,1) -- (6,0) -- (7,1) -- (8,0) -- (9,-1) -- (11,1) -- (12,0) -- (13,1) -- (14,0) --(12,-2) -- (11,-1) --(9,-3) -- (7,-1) -- (6,-2) -- cycle;
		\draw[fill=green](4,0) -- (5,1) -- (6,0) -- (7,1) -- (8,0) -- (6,-2) -- cycle;
		\tikzset{vertex/.style={}}
		\tikzset{edge/.style={very thick}}
		\tabpath{+,-,+,-,+,-,+,-,-,+,+,-,+,-}
		\tabpath{-,-,+,-,-,-,+,+,-,+,+,-,+,+}
		\tabpath{-,-,+,-,+,-,+,+,-,+,-}
		\tabpath{-,-,+,+,-,-,+,-}
		\node at (4,-1) {$T_2$};
		\node at (9,-2) {$S_2$};	
		\node[black!30!yellow] at (4,1) {$T_1$};	
		\node[blue] at (9,1) {$S_1$};	
		\node at (6,-1) {$D$};
		\node at (2,-1) {$R_1$};
		\node at (12,-1) {$R_2$};	
		\end{tikzpicture}
		\end{center}
	\caption{In this example $T_1=R_1\sqcup D\sqcup T_2$ is depicted in yellow and green while $S_1=R_2\sqcup D\sqcup S_2$ is depicted in green and blue. Moreover, the union of the colored strips forms a Dyck strip $U$.}
	\label{gybstrips}
\end{figure}

	 Applying \Cref{distantDp}.\ref{dist2}) twice, we see that there exist $c_1,c_2,c_3 \in \bbQ$ such that 
	\begin{equation}\label{longcommute}f_D\circ f_{R_1}\circ f_{R_2}=c_1 f_{R_1} \circ f_{R_2} \circ f_D + c_2 f_{T_1} \circ g_{T_2}\circ f_{R_2} + c_3 f_{R_1}\circ f_{S_1} \circ g_{S_2}. \end{equation}
	
	Let $\bfR=\bfP\setminus\{ R_1,R_2\}$ and $o''_\bfR$ be the restriction of $o''$ to $\bfR$. Notice that $o''_{\bfR}$ ends in $C$. We have $f_D \circ f_C= 0$, so when precomposing with $f_{\bfR,o''}$ the first term in the RHS of \eqref{longcommute} vanishes and the second term is in $V$ by induction since $\ell(T_1)>\ell(D)$. 
	 
	It remains to consider the third term in \eqref{longcommute}. Notice that there exists a Dyck strips $U$ with $U= R_1 \sqcup S_1 \sqcup T_2$ and 
	\[f_{R_1} \circ f_{S_1} = d_1f_{S_1} \circ f_{R_1} + d_2 f_{U} \circ g_{T_2}\]
	for some $d_1,d_2\in \bbQ$. 
	Since $\ell(U)>\ell(S_1)>\ell(D)$ and $\hgt(U)=\hgt(S_1)=\hgt(D)$, by induction we obtain also $ f_{R_1} \circ f_{S_1}\circ g_{S_2}\circ f_{\bfR,o''_\bfR}\in V$. \qedhere
	\end{itemize}
	\end{proof}
	
	We go back to the proof of \Cref{isabasis}.
	 From \Cref{Claimchesembravafacile} it follows that $\Hom_{\not< \lambda}(B_\lambda^I,B_\mu^I)$ is generated by the set
	$\{f_{\bfP}\}_{\bfP\in \Conf^1(\lambda,\mu)}$. 
	 Then since $\grrk \Hom_{\not< \lambda}(B_\lambda^I,B_\mu^I)=h_{\mu,\lambda}^I(v)$, from \Cref{SZJ} we see that the generating set $\{f_{\bfP}\}_{\bfP\in \Conf^1(\lambda,\mu)}$ has the right graded size, thus it must be a basis.
\end{proof}

We are now ready to prove the first of the two induction statements. Recall that $\hgt(\lambda,\mu)=\max\{\hgt(D) \mid D\in \bfP$ for $\bfP\in \Conf^1(\lambda,\mu)\}$.

\begin{lemma}\label{Aproof}
Let $\lambda,\mu$ be paths with $\lambda\leq \mu$ and let $\bfP\in \Conf^1(\lambda,\mu)$. Assume
\begin{enumerate}[i)]
	\item $A(\lambda,\mu')$ for all $\mu'<\mu$;
	\item $B(\lambda,\mu')$ for all $\mu'< \mu$;
	\item $A(\lambda,\mu,\bfQ)$ for all $\bfQ\in \Conf^1(\lambda,\mu)$ such that $\bfQ\prec \bfP$;
	\item \label{hypiv}$B(\lambda,\mu,D,\bfR)$ for all Dyck strips $D$ with $\hgt(D)=\hgt(\lambda,\mu)$ that can be removed from $\mu$ and for all $\bfR\in \Conf^1(\lambda,\mu-D)$ such that $\bfP\succ Y(\bfR,D)$.
\end{enumerate}
Then $A(\lambda,\mu,\bfP)$ holds.
\end{lemma}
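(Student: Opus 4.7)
The plan is to bring an arbitrary admissible ordering $o'$ into the chosen ordering $o_\bfP$ by a sequence of two types of elementary moves: normalization to the height-ordered form (via Lemma \ref{orderedheight}) and adjacent swaps of two strips at the same height (via Lemma \ref{distantDp}), while controlling the resulting error terms with the inductive hypotheses (i)--(iv).

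First I would apply Lemma \ref{orderedheight} to both orderings, obtaining $f_{\bfP,o'}=c_1 f_{\bfP,o'_{\hgt}}$ and $f_{\bfP}=f_{\bfP,o_\bfP}=c_2 f_{\bfP,(o_\bfP)_{\hgt}}$ with $c_1,c_2\in\bbQ^*$. Thus it suffices to prove the claim for two height-ordered admissible orderings. Any two such orderings agree on the sequence of heights and differ only by permutations within each block $\bfP(h)$. By transitivity this reduces to a single adjacent swap of two strips $E,E'\in\bfP(h)$, which are automatically distant (two distinct strips of a Dyck partition at the same height share no adjacent boxes).

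Since $\hgt(E)=\hgt(E')$, the second part of Lemma \ref{distantDp} yields
\[
 f_{E'}\circ f_E \;=\; c\cdot f_E\circ f_{E'}\quad\text{in}\quad \Hom^\bullet_{\not<\nu}(B_\nu^I,B_{\nu+E+E'}^I),\qquad c\in\bbQ^*,
\]
where $\nu$ is the path at which $E$ and $E'$ are applied. The discrepancy $f_{E'}\circ f_E - c\,f_E\circ f_{E'}$ therefore factors through some $B_\eta^I$ with $\eta<\nu$. Composing with the ``lower'' morphism $f_\bfL$ (strips applied before the swap) and the ``upper'' morphism $f_\bfU$ (strips applied after) contributes to $f_{\bfP,o'}-c\,f_{\bfP,o_\bfP}$ a morphism $B_\lambda^I\to B_\mu^I$ factoring through some $B_\eta^I$ with $\eta<\nu\leq\mu$.

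To rewrite this error inside $\spa\langle f_\bfQ\mid \bfQ\prec\bfP\rangle$, one first uses condition (i) together with Lemma \ref{isabasis} applied at $\eta$ (valid since every $\eta'\leq\eta$ satisfies $\eta'<\mu$) to expand the ``lower half'' $B_\lambda^I\to B_\eta^I$, modulo $\Hom_{<\lambda}$, as a combination of basis elements $f_\bfR$ with $\bfR\in\Conf^1(\lambda,\eta)$. The ``upper half'' $B_\eta^I\to B_\mu^I$ is then peeled off one strip at a time from the top: after peeling a topmost strip $D$, Lemma \ref{isabasis} at $\mu-D$ (again from (i)) rewrites the intermediate composition $B_\lambda^I\to B_{\mu-D}^I$ as a combination of $f_{\bfR''}$'s with $\bfR''\in\Conf^1(\lambda,\mu-D)$, and then condition (iv), i.e.\ $B(\lambda,\mu,D,\bfR'')$, expands each $f_D\circ f_{\bfR''}$ in the basis $\{f_\bfQ\mid\bfQ\in\Conf^1(\lambda,\mu)\}$. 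The final coefficient of $f_\bfP$ in $f_{\bfP,o'}$ is a product of the nonzero scalars supplied by Lemma \ref{orderedheight} and by Lemma \ref{distantDp}, hence lies in $\bbQ^*$.

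The main obstacle is the combinatorial verification at the last step: one must check, at each invocation of condition (iv), that the hypothesis $\bfP\succ Y(\bfR'',D)$ is satisfied, and consequently that every partition $\bfQ$ produced in the expansion of the error actually satisfies $\bfQ\prec\bfP$. This amounts to analyzing how the strict inequality $\eta<\nu$ propagates when lifting a partition of $\calA(\lambda,\eta)$ to one of $\calA(\lambda,\mu)$ by attaching the upper strips: the ``profile'' of each resulting $\bfQ$ at heights $\leq h$ must be strictly finer than $\bfP|_{\leq h}$ at some level. This bookkeeping is the delicate part of the argument, but the constraint that $E$ and $E'$ belong to the same block $\bfP(h)$ and the structure of type-1 partitions should make it tractable.
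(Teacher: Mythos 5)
Your high-level plan is essentially the paper's: reduce to a single adjacent swap of two same-height distant strips (after height-normalizing with Lemma \ref{orderedheight}), use the second part of Lemma \ref{distantDp} to identify the error term as $f_T\circ g_{T'}$ composed with the surrounding strips, expand the ``lower'' piece with Lemma \ref{isabasis} via hypothesis (i), and then control the ``upper'' piece with hypotheses (ii) and (iv). That much is right.

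However, you explicitly stop at what is the substance of the proof: verifying that at each invocation of hypothesis (iv) the required condition $\bfP\succ Y(\bfR'',D)$ actually holds, and that every $\bfQ$ appearing in the resulting expansion satisfies $\bfQ\prec\bfP$. This is not a routine check; it is the bulk of the paper's argument and it requires machinery you have not introduced. Concretely, the paper first uses Lemma \ref{distneg} to commute $g_{T'}$ past the height-$h$ strips $\bfP^{<j}(h)$ (a step missing from your proposal), then expands only $g_{T'}\circ f_{\bfP^{<j}(<h)}$ at a strictly smaller path $\nu$, introduces the maximal single-box partition $\bfU\in\Conf^1(\lambda,\nu)$ and the auxiliary partition $\{T\}\cup\bfP^{<j}(h)\cup\bfU$, proves $\{T\}\cup\bfP^{<j}(h)\cup\bfU\prec\{D_j,D_{j+1}\}\cup\bfP^{<j}$, and then splits into the two cases $\nu'=\mu$ and $\nu'<\mu$, using hypothesis (iii) in the first case and iterating hypothesis $B(\lambda,\mu')$ in the second before invoking (iv) on the final strip $D_k$ of maximal height. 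None of this order bookkeeping is automatic, and several of the required inclusions $Y(\cdot,\cdot)\prec\bfP$ depend on the precise choice of $\bfU$ and on having already pushed $g_{T'}$ past $\bfP^{<j}(h)$. Without this, your ``peel from the top'' strategy does not obviously terminate with only partitions $\prec\bfP$: after one application of $B(\lambda,\mu,D,\bfR'')$ you land in $\Conf^1(\lambda,\mu)$ again, and you would need a genuine argument that you are not producing $\bfP$ itself or incomparable partitions. So while the skeleton is sound, the proof as written has a real gap exactly where the paper does its work.
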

\begin{proof}
	Let $o_\bfP=(D_1,\ldots,D_k)$ and $o'=(D_{\sigma(1)},\ldots,D_{\sigma(k)})\in \adm(\bfP)$. We need to show that for some $c \in \bbQ^*$ we have
	\begin{equation}\label{Aeq}
	f_{\bfP,o'}-c\cdot f_{\bfP}\in \spa \langle f_{\bfQ} \mid \bfQ\prec \bfP\rangle\subset \Hom_{\not< \lambda}(B_\lambda^I,B_\mu^I).
	\end{equation}

	Assume that $\sigma$ is not the trivial permutation. Then there exists $j$ such that $\sigma(j)>\sigma(j+1)$. This implies that the strips $D_{\sigma(j)}$ and $D_{\sigma(j+1)}$ can be added in any order, in particular they are distant. Hence $o'':=(D_{\sigma(1)},\ldots D_{\sigma(j-1)},D_{\sigma(j+1)},D_{\sigma(j)},D_{\sigma(j+2)},\ldots D_{\sigma(k)})\in \adm(\bfP)$.
	
	By induction on the length of the permutation $\sigma$ we can assume that \[f_{\bfP,o''}=d\cdot f_{\bfP}+ \spa \langle f_{\bfQ} \mid \bfQ\prec \bfP\rangle\subset \Hom_{\not< \lambda}(B_\lambda^I,B_\mu^I),\]
	for some $d\in \bbQ^*$.
	Now, up to replacing $o_\bfQ$ with $o''$, it is enough to prove the statement \eqref{Aeq} when $\sigma$ is the simple transposition $(j\; j+1)$, i.e we can assume 
	\[o'=(D_1,\ldots, D_{j-1},D_{j+1},D_j,D_{j+2},\ldots, D_k).\]

	If $D_j\sqcup D_{j+1}$ cannot be obtained as the difference of two Dyck strips then, from \Cref{distantDp}.\ref{dist1}), we have $f_{\bfP,o'}=d'f_{\bfP}$ for some $d'\in \bbQ^*$.
	So we can assume in the following that $h:=\hgt(D_j)=\hgt(D_{j+1})$ and that there exists Dyck strips $T$ and $T'$ such that $D_j\sqcup D_{j+1}\sqcup T'=T$.
	Moreover, due to \Cref{orderedheight}, we can also replace $o$ by $o_{\hgt}$ and $o'$ by $o'_{\hgt}$. Notice that $o_{\hgt}$ and $o'_{\hgt}$ still differ by a single transposition.

	Let $\bfP^{<j}=\{D_1,\ldots, D_{j-1}\}$, $o_\bfP^{<j}=(D_1,\ldots, D_{j-1})$. Define similarly $\bfP^{>j+1}$ and $o_\bfP^{>j+1}$. Since $o=o_{\hgt}$, all the strips in $\bfP^{<j}$ have height $\leq h$. Let $\bfP^{<j}(h)$ be the subset of $\bfP^{<j}$ of strips of height $h$ and $\bfP^{<j}(<h)$ the subset of strips of height $<h$.
	By \Cref{distantDp}.\ref{dist1}), there exist $c\in \bbQ^*$ and $c'\in \bbQ$ such that
	\begin{multline*}
	f_{\bfP,o'}-c\cdot f_{\bfP}=c'\cdot f_{\bfP^{>j+1},o_\bfP^{>j+1}}\circ f_{T}\circ g_{T'} \circ f_{\bfP^{<j},o_\bfP^{<j}}=\\=c'\cdot f_{\bfP^{>j+1},o_\bfP^{>j+1}}\circ f_{T}\circ g_{T'} \circ f_{\bfP^{<j}(h),o_\bfP^{<j}(h)}\circ f_{\bfP^{<j}(<h),o_\bfP^{<j}(<h)}.
	\end{multline*}
	Notice that $T'$ is distant from all the strips in $\bfP^{<j}(h)$, hence by \Cref{distneg} there exists $c''\in \bbQ^*$ such that 
	\[ g_{T'} \circ f_{\bfP^{<j}(h),o_\bfP^{<j}(h)}= c''\cdot f_{\bfP^{<j}(h),o_\bfP^{<j}(h)}\circ g_{T'}.\]
	Let $\nu=\lambda+\sum_{C\in \bfP^{<j}(<h)} C-T'$. Since $\nu< \mu$, we can use the hypothesis $A(\lambda,\kappa)$ for all $\kappa\leq \nu$ and \Cref{isabasis} to write
	\[g_{T'} \circ f_{\bfP^{<j}(<h),o^{<j}(<h)}\in \spa \langle f_\bfR \mid \bfR \in \Conf^1(\lambda,\nu)\rangle \subset \Hom_{\not< \lambda}(B_\lambda^I,B_\nu^I)\]
	and
\[f_{\bfP,o'}-c\cdot f_{\bfP}\in \spa\langle f_{\bfP^{>j+1},o_{\bfP}^{>j+1}} \circ f_T\circ f_{\bfP^{<j}(h),o^{<j}(h)}\circ f_\bfR \mid \bfR \in \Conf^1(\lambda,\nu)\rangle.\]
	
We need to ``bound'' each term of the form $f_{\bfP^{>j+1},o_{\bfP}^{>j+1}} \circ f_T\circ f_{\bfP^{<j}(h),o^{<j}(h)}\circ f_\bfR $ for $\bfR\in \Conf^1(\lambda,\nu)$, i.e. we need to show that 
\[f_{\bfP^{>j+1},o_{\bfP}^{>j+1}} \circ f_T\circ f_{\bfP^{<j}(h),o^{<j}(h)}\circ f_\bfR \in \spa\langle f_\bfS\mid \bfS\prec \bfP\rangle\]
	Let $\bfU$ be the maximal Dyck partition in $\Conf^1(\lambda,\nu)$, i.e. the partition consisting only of single boxes. Let $\nu'=\nu+\sum_{C\in \bfP^{<j}(h)}C+T$. Clearly $\{T\}\cup \bfP^{<j}(h) \cup \bfU\in \Conf^1(\lambda,\nu')$ and
	\[ \{T\}\cup \bfP^{<j}(h) \cup \bfU \prec \{D_j,D_{j+1}\}\cup \bfP^{<j}.\]
We divide the rest of the proof into two cases.
\begin{itemize}
	\item $\bfP^{>j}=\emptyset$.

We have $\nu'=\mu$ and $\bfP=\{D_j,D_{j+1}\}\cup \bfP^{<j}$. Furthermore, in this case we have $h=\hgt(T)=\hgt(\lambda,\mu)$. Then $\bfP\succ \{T\}\cup \bfP^{<j}(h) \cup \bfU$, so we can apply the hypothesis $A(\lambda,\mu,\{T\}\cup \bfP^{<j}(h) \cup \bfU)$ to deduce 
\begin{equation}\label{TU} f_T\circ f_{\bfP^{<j}(h),o^{<j}(h)}\circ f_\bfU \in \spa\langle f_\bfS\mid \bfS\prec \bfP\rangle.\end{equation}

If $\bfR\prec \bfU$, we have by $B(\lambda,\mu')$ for $\mu'\leq \mu-T$ that 
\begin{equation}\label{PR}f_{\bfP^{<j}(h),o^{<j}(h)}\circ f_\bfR\in \spa\langle f_{\bfQ}\mid \bfQ \prec \bfP^{<j}(h)\cup \bfU\rangle.
\end{equation}
If $\bfQ \prec \bfP^{<j}(h)\cup \bfU$, since $\{T\}\cup \bfP^{<j}(h)\cup \bfU \in X(\bfQ,T)$, we also have $\bfP\succ Y(\bfQ,T)$.
Because of the hypothesis (\ref{hypiv}), we can now apply $B(\lambda,\mu,T,\bfQ)$ and obtain
\begin{equation}\label{TQ}f_T\circ f_{\bfQ}\in \spa\langle f_\bfS\mid \bfS\prec \{T\}\cup \bfP^{<j}(h) \cup \bfU\rangle\cug \spa\langle f_\bfS\mid \bfS\prec \bfP\rangle.
	\end{equation}
Putting together \eqref{TU},\eqref{PR} and \eqref{TQ}, we obtain as desired that
\[f_T\circ f_{\bfP^{<j}(h),o^{<j}(h)}\circ f_\bfR \in  \spa\langle f_\bfS\mid \bfS\prec \bfP\rangle.\]

\item  $\bfP^{>j}\neq \emptyset$.
	
In this case we have $\nu'<\mu$. Applying repeatedly the hypothesis $B(\lambda,\mu')$ for some $\mu'<\mu$, we get 
\begin{multline*}
f_{\bfP^{>j+1},o_{\bfP}^{>j+1}} \circ f_T\circ f_{\bfP^{<j}(h),o^{<j}(h)}\circ f_\bfR \in \spa\langle f_{\bfP^{>j+1},o_{\bfP}^{>j+1}} \circ f_\bfV\mid \bfV\preceq \{T\}\cup \bfP^{<j}(h) \cup \bfU \rangle \\
\cug \spa\langle f_{\bfP^{>j+1},o_{\bfP}^{>j+1}} \circ f_\bfV\mid \bfV\prec \{D_j,D_{j+1}\}\cup \bfP^{<j} \rangle.
\end{multline*}

Let $D_k$ be the last strip in $\bfP$. Since $o=o_{\hgt}$ we have $\hgt(D_k)=\hgt(\lambda,\mu)$. Applying again repeatedly the hypothesis $B(\lambda,\mu')$ for $\mu'<\mu$ we get 
	\[f_{\bfP^{>j+1},o_{\bfP}^{>j+1}} \circ f_T\circ f_{\bfP^{<j}(h),o^{<j}(h)}\circ f_\bfR\in \spa\langle f_{D_k}\circ f_\bfV \mid \bfV\prec \bfP\setminus \{D_k\}\rangle.\]
	
Since $\bfV\prec \bfP\setminus \{D_k\}$, we have $\bfP\in X(\bfV,D_k)$ and $Y(\bfV,D_k)\prec \bfP$.
The claim finally follows from $B(\lambda,\mu,D_k,\bfV)$.\qedhere
\end{itemize}	
\end{proof}

\begin{lemma}\label{Bproof}
	Let $\lambda,\mu$ be paths with $\lambda\leq \mu$. Let $D$ a Dyck strip that can be removed from $\mu$ and let $\nu=\mu-D$. Let $\bfR\in\Conf^1(\lambda,\nu)$. Assume:	
	\begin{enumerate}[i)]
		\item $A(\lambda,\mu')$ for all $\mu'<\mu$;
		\item $B(\lambda,\mu')$ for all $\mu'<\mu$;
		\item\label{hyp3} $A(\lambda,\mu,\bfQ)$ if $\bfQ\in Y(\bfR,D)$;
		\item\label{hyp4} $B(\lambda,\mu,D,\bfQ)$ for all $\bfQ\in \Conf^1(\lambda,\nu)$ with $\bfQ\prec \bfR$;
		\item\label{hyp5} $B(\lambda,\mu,D',\bfQ)$ for all Dyck strips $D'$ with $\hgt(D')>\hgt(D)$ and for all $\bfQ\in \Conf^1(\lambda,\mu-D')$;
			\item\label{hyp6} $B(\lambda,\mu,D',\bfQ)$ for all Dyck strips $D'$ with $\hgt(D')=\hgt(D)$ and $\ell(D')>\ell(D)$ and for all $\bfQ\in \Conf^1(\lambda,\mu-D')$ such that $Y(\bfQ,D')\subseteq Y(\bfR,D)$.
	\end{enumerate}
	Then $B(\lambda,\mu,D,\bfR)$ holds.
\end{lemma}
\begin{proof}
		If $\bfR\cup \{D\}$ is of type 1, then $\bfR\cup \{D\}\in Y(\bfR,D)$. Hence the claim follows from $A(\lambda,\mu,\bfR\cup\{D\})$.
	We assume now that $\bfR\cup \{D\}$ is not of Type 1. 
	Let 
	\[V:=\spa \langle f_\bfQ \mid \bfQ \in Y(\bfR,D)\rangle \subset \Hom_{\not\leq \lambda} (B_\lambda^I,B_\mu^I).\] We need to show that $f_D\circ f_{\bfR,o'}\in V$ for all $o'\in \adm(\bfR)$. However, if $o',o''\in \adm(\bfR)$, by $A(\lambda,\nu)$ we have 
	$f_{\bfR,o'}-cf_{\bfR,o''}\in \spa \langle f_{\bfQ} \mid \bfQ \prec \bfR\rangle$ for some scalar $c\in \bbQ^*$. From the hypothesis \ref{hyp4}) it is enough to show that $f_D\circ f_{\bfR,o'}\in V$ for some $o'\in \adm(\bfR)$.

	Let $h:=\hgt(D)$. Since $\bfR\cup\{D\}$ is not of type 1, there exists a strip $C\in \bfR$ which contains a box South of a box in $D$ and such that $\hgt(C)\geq h-1$.

	The strategy is similar to the proof of \Cref{Claimchesembravafacile}.  We divide the proof into three cases.
	
	\begin{itemize}
		\item\textbf{Assume there exists a strip $D'\in \bfR$ with $D'\neq C$ that can be removed from $\nu$ such that $\hgt(D')>\max\{\hgt(C),\hgt(D)\}$.}
		 
		Then $D'$ and $D$ are distant. We can assume that $D'$ has maximal height among all the strips with this property, so $h':=\hgt(D')=\hgt(\lambda,\mu)$.
	
	\begin{claim}\label{Y}
For any $\bfS\in Y(\bfR\setminus\{D'\},D)$ we have
\begin{equation*} Y(\bfS,D') \subseteq Y(\bfR,D).
\end{equation*}
%Moreover, if $\bfS\cup \{D'\}$ is of type $1$, then $\bfS \cup \{D'\}\in Y(R,D')$.
	\end{claim}
	\begin{proof}[Proof of the claim.]		
		Let $\bfP\in X(\bfR,D)$, i.e. $D\in \bfP$ and $\bfP\setminus\{D\} \succ \bfR$. 		
		We need to show that for any $\bfS \in Y(\bfR\setminus\{D'\},D)$ we have $Y(\bfS,D')\prec \bfP$.
		
		Assume that $D'\not\in \bfP$. Then, $\bfP(h')$ is finer than $\bfR(h')$. In particular, $D'$ is a union of strips in $\bfP$. We can construct a new partition $\bfQ$ containing $D,D'$, all the strips in $\bfP(h')$ not contained in $D'$ and such that all its other strips are single boxes.
		We have $\bfP\succ \bfQ$ and $\bfQ\in X(\bfR,D)$. So up to replacing $\bfP$ with $\bfQ$ we can assume $D'\in \bfP$.
		
		If $D'\in \bfP$, then $\bfP\setminus \{ D,D'\} \succ \bfR\setminus \{D'\}$, so $\bfP\setminus \{D'\}\in X (\bfR\setminus\{D'\},D)$ and $\bfS\prec \bfP\setminus \{D'\}$. Then also $\bfP \in X(\bfS,D')$ and $\bfP\succ Y(\bfS,D')$.	
	%	Moreover, if $\bfS\cup \{D'\}$ is of type $1$, from $\bfS\prec \bfP\setminus \{D'\}$ it follows $\bfS\cup \{D\}\prec \bfP$ and $\bfS\cup \{D\}\in Y(\bfR,D)$.
	\end{proof}
	
	There exist $o'\in \adm(\bfR)$ ending in $D'$ and a scalar $c\in \bbQ$ for which
	 \[f_{D}\circ f_{D'}\circ f_{\bfR\setminus\{D'\},o'}=c f_{D'}\circ f_{D}\circ f_{\bfR\setminus\{D'\},o'}\in \spa\langle f_{D'}\circ f_{\bfS} \mid \bfS\in Y(\bfR\setminus\{D'\},D)\rangle.   \]
	 Since $\hgt(D')>\hgt(D)$ we can apply $B(\lambda,\mu,\bfS,D')$ by  the hypothesis (\ref{hyp5}) and conclude by \Cref{Y} and \Cref{Yrmk}.
	 
	 \item \textbf{$\hgt(C)= h-1$ and there does not exist  $D'\in \bfR$ with $\hgt(D')>\max\{\hgt(C),\hgt(D)\}$.}
	 
	 In this case there are no strips of height $>h$ in $\bfR$ and there are at most two strips $R_1,R_2\in \bfR$ with $\hgt(R_1)=\hgt(R_2)=h$ touching $C$ as in \Cref{touchingC}. We can find an admissible order $o'\in \adm(\bfR)$ whose last three elements are $C,R_2,R_1$.

 As in the proof of \Cref{Claimchesembravafacile} we have
\begin{equation}\label{trecommut}f_D\circ f_{R_1}\circ f_{R_2} =c_1 f_{R_1} \circ f_{R_2} \circ f_D + c_2 f_{T_1} \circ f_{R_2}\circ g_{T_2} + d_1f_{S_1} \circ f_{R_1}\circ g_{S_2} + d_2 f_U \circ g_{T_2} \circ g_{S_2}.
\end{equation}
for some scalars $c_1,c_2,d_1,d_2\in \bbQ$ (the Dyck strips $S_1,S_2,$ etc., have the same meaning as in \Cref{gybstrips}).

Let $o'_{\bfR\setminus\{R_1,R_2\}}$ be the restriction of $o'$ to $\bfR\setminus\{R_1,R_2\}$.
	We need to compute 
	\[f_D\circ f_{\bfR,o'}=f_{D}\circ f_{R_1}\circ f_{R_2}\circ f_{\bfR\setminus\{R_1,R_2\},o'_{\bfR\setminus\{R_1,R_2\}}}.\]
	Since $f_D\circ f_C=0$ after precomposing with $f_{\bfR\setminus\{R_1,R_2\},o'_{\bfR\setminus\{R_1,R_2\}}}$ the first term in the RHS of \eqref{trecommut} vanishes.
	
	Consider the term $f_{T_1}\circ f_{R_2}\circ g_{T_2}\circ f_{\bfR\setminus\{R_1,R_2\},o'_{\bfR\setminus\{R_1,R_2\}}}$. Let
	$\bfZ$ be the Dyck partition in $\Conf^1(\lambda,\nu- R_1-T_2)$ which contains all the Dyck strips in $\bfR\setminus\{R_1\}$ of height $h$ and such that all the other Dyck strips in $\bfZ$ are single boxes. In particular, $R_2\in \bfZ$ and we have $\bfR\setminus\bfR(h)\prec \bfZ \setminus \bfZ(h)$. Since $g_{T_2}$ commutes with all the morphisms attached to the strips in $\bfZ(h)$, applying repeatedly  $B(\lambda,\mu')$ for $\mu'\leq \mu-T$, we have
	\begin{equation}\label{R2T2}f_{R_2}\circ g_{T_2}\circ f_{\bfR\setminus\{R_1,R_2\},\tilde{o}_{\bfR\setminus\{R_1,R_2\}}}\in \spa\langle f_{\bfS} \mid \bfS\preceq \bfZ\rangle.\end{equation}

	\begin{claim}\label{claimpart2}
		We have $\bfZ\cup \{T_1\}\in Y(\bfR,D)$.
	\end{claim}
	\begin{proof}[Proof of the claim.]
		Let $\bfP\in X(\bfR,D)$. We have $\bfP(j)=\emptyset$ if $j>h$. Moreover, 
		$\bfP(h)$ is finer or equal than $\bfR(h)\cup \{D\}$ which in turn is finer than $\bfZ(h)\cup \{T_1\}$.
	\end{proof}

	Notice that $\bfZ\cup\{T_1\}$ is of type 1. By hypothesis (\ref{hyp3}) we have $f_{T_1}\circ f_\bfZ\in V$. If $\bfS \prec \bfZ$, then $Y(\bfS,T_1)\prec \bfZ\cup \{T_1\}$, hence $Y(\bfS,T_1)\cug Y(\bfR,D)$ and by the hypothesis (\ref{hyp6}) we deduce that
	$f_{T_1}\circ f_{\bfS} \in V$. Combining this with \eqref{R2T2} we conclude that
	\[f_{T_1}\circ f_{R_2}\circ g_{T_2}\circ f_{\bfR\setminus\{R_1,R_2\},o'_{\bfR\setminus\{R_1,R_2\}}}\in V.\]
	 The proof for the remaining terms in the RHS of \eqref{trecommut} is similar. Finally, we obtain as desired \[f_D\circ f_{R_1}\circ f_{R_2}\circ f_{\bfR\setminus\{R_1,R_2\},o'_{\bfR\setminus\{R_1,R_2\}}}\in V.\]

\item \textbf{$\hgt(C)\geq h$ and there does not exist  $D'\in \bfR$ that can be removed from $\nu$ with $\hgt(D')>\max\{\hgt(C),\hgt(D)\}$.}
 In this case, there exists an admissible order $o'\in \adm(\bfR)$ ending in $C$. 

Let $o'_{\bfR\setminus \{C\}}$ be the restriction of $o'$ to $\bfR\setminus\{C\}$.
	As in \Cref{fromIItoI}, there exists $\{C',D'\}$ of Type 1 such that $D'\sqcup C'=D\sqcup C$. By \Cref{overlying} we have 
	\[f_D\circ f_{C}\circ f_{\bfR\setminus\{C\},o'_{\bfR\setminus\{C\}}}=f_{D'}\circ f_{C'}\circ f_{\bfR\setminus\{C\},o'_{\bfR\setminus\{C\}}}\]
	
	Consider now the Dyck partition $\tilde{\bfZ}\in \Conf^1(\lambda,\nu-C)$ obtained by taking all the Dyck strips in $\bfR\setminus \{C\}$ of height $\geq \hgt(C)$, and by replacing every strip in $\bfR\setminus \{C\}$ of height $<\hgt(C)$ with the single boxes of which it consists. We have 	
	$\tilde{\bfZ}\succeq \bfR\setminus\{C\}$ and, moreover, $\tilde{\bfZ} \cup\{D',C'\}\in \Conf^1(\lambda,\mu)$ (see \Cref{finalpict}). 
	Applying $B(\lambda,\mu-D')$ we obtain 
	\begin{equation}\label{DCDC}f_D\circ f_{\bfR,o'}=f_{D'}\circ f_{C'}\circ f_{\bfR\setminus\{C\},o'_{\bfR\setminus\{C\}}}\in \spa \langle f_{D'}\circ f_\bfS \mid \bfS \preceq \tilde{\bfZ} \cup\{C'\} \rangle.
	\end{equation}	

	\begin{claim}\label{claimpart}
	We have $\tilde{\bfZ}\cup \{C',D'\}\in Y(\bfR,D)$.
\end{claim}

	\begin{proof}[Proof of the claim.]
	Let $\bfP\in X(\bfR,D)$ and let $\bfQ=\bfP \setminus\{D\}$. We need to show that $\tilde{\bfZ}\cup \{D',C'\}\prec \bfP$. Notice that there cannot be any strip in $\bfQ$ containing $C$, as this would contradict the hypotheses that $D\in \bfP$ and that $\bfP$ is of Type 1. In particular this means that $\bfQ(\hgt(C))\neq \bfR(\hgt(C))$.	Since $\bfQ\succ \bfR$, the maximal index $j$ such that $\bfQ(j)\neq\bfR(j)$ must be $j\geq \hgt(C)$. 
	
	If $j>\hgt(C)$ it is easy to see that 
	$(\tilde{\bfZ}\cup \{D',C'\})(j)=\tilde{\bfZ}(j)=\bfR(j)$ and also
	$\bfQ(j)=\bfP(j)$. Since $\bfQ(j)$ is finer than $\bfR(j)$ the claim follows.
	
	If $j=\hgt(C)$ then $(\tilde{\bfZ}\cup \{D',C'\})(\hgt(C))=(\bfR(\hgt(C))\setminus\{C\})\cup \{D'\}$ and 
	\[\bfP(\hgt(C))=\begin{cases} \bfQ(\hgt(C)) & \text{if }\hgt(D)<\hgt(C)\\
	\bfQ(\hgt(C)) \cup \{D\} & \text{if }\hgt(D)=\hgt(C).
	\end{cases}\]
	We have $D\subset D'$. We need to show that any other strip in $\bfQ$ of height $\hgt(C)$ that is contained in $C$ is also contained in $D'$. 
	
	Let $D_\bfQ$ be such a strip. Since $\bfP$ is of type 1, $D_\bfQ$ cannot contain any box South of a box in $D$, so it cannot contain any box in $C'$. Since $D_\bfQ\subset C\subset D'\sqcup C'$ it follows that $D_\bfQ$ must be contained in $D'$.
	 Hence $\bfP(\hgt(C))$ is finer than $(\tilde{\bfZ}\cup \{C',D'\})(\hgt(C))$, thus $\bfP\succ \tilde{\bfZ}\cup \{C',D'\}$.
\end{proof}

\begin{figure}[ht]
	\begin{center}
		\begin{tabular}{c}
			\begin{tikzpicture}[x=\boxmini,y=\boxmini]
			\tikzset{vertex/.style={}}
			\tikzset{edge/.style={very thick}}
				\draw[fill=yellow] (1,1) -- (2,2) -- (3,1) -- (4,2) -- (9,-3)-- (14,2) -- (15,1) -- (9,-5) -- (4,0) -- (3,-1) -- cycle; 
			\tabpath{+,+,-,+,-,-,-,-,-,+,+,+,+,+,-,-}
			\tabpath{-,-,-,+,-,-,-,-,-,+,+,+,+,+,+,+}
			\tabpath{+,-,-,+,-,-,-,-,-,+,+,+,+,+,-,+}
			\tabpath{-,-,-,+,+,-,-,-,-,+,+,+,+,-}
			\tabpath{-,-,-,+,-,-,-,-,-,+,+,+,+,+,+,+}
			\tabpath{-,+,-,+,-,-,-,-,-,+,+,+,+,+,+}
			\node at (6,-1) {$C$};
			\node at (9,1) {$\bfR$};
			\end{tikzpicture}\qquad
			\begin{tikzpicture}[x=\boxmini,y=\boxmini]

			\tikzset{vertex/.style={}}
			\tikzset{edge/.style={very thick}}
			\draw[fill=orange] (9,-3) -- (12,0) --(11,1) -- (9,-1) -- (7,1) -- (6, 0) --cycle;
			\tabpath{+,+,-,+,-,-,-,-,-,+,+,+,+,+,-,-}
			\tabpath{-,-,-,+,-,-,-,-,-,+,+,+,+,+,+,+}
			\tabpath{+,-,-,+,-,-,-,-,-,+,+,+,+,+,-,+}
			\tabpath{-,-,-,+,+,-,-,-,-,+,+,+,+,-}
			\tabpath{-,-,-,+,-,-,-,-,-,+,+,+,+,+,+,+}
			\tabpath{-,+,-,+,-,-,-,-,-,+,+,+,+,+,+}
			\tabpath{+,-,-,+,+,-,+,-,-,+,+,-,+,-}
			\tabpath{+,-,-,+,-,+,-,-,-,+,+,-,-}
			\tabpath{-,-,-,+,-,+,+,-,-,+,+,+,-}
			\tabpath{-,-,-,+,-,-,+,-,-,+,+,-}
	
			\node at (8,-1) {$D$};
			\node at (9,1) {$\bfP$};
			\node at (3,0.3) {$D_\bfQ$};		
			\end{tikzpicture}\\
			\begin{tikzpicture}[x=\boxmini,y=\boxmini]
			\tikzset{vertex/.style={}}
			\tikzset{edge/.style={very thick}}
			\draw[fill=orange] (1,1) -- (2,2) -- (3,1) -- (4,2) -- (6,0)-- (7,1) -- (9,-1) -- (11,1) -- (12,0) --	(14,2) -- (15,1) -- (12,-2) -- (11,-1) -- (9,-3) -- (7,-1)-- (6,-2) -- (4,0) -- (3,-1)-- cycle;
			\draw[fill=yellow] (9,-5) -- (12,-2) --(11,-1) -- (9,-3) -- (7,-1) -- (6, -2) --cycle; 
			\tabpath{+,+,-,+,-,-,+,-,-,+,+,-,+,+,-,-}
			\tabpath{-,-,-,+,-,-,-,-,-,+,+,+,+,+,+,+}
			\tabpath{+,-,-,+,-,-,+,-,-,+,+,-,+,+,-}
			\tabpath{+,-,-,+,-,-,-,-,-,+,+,+,+,+,-,+}
			\tabpath{-,-,-,+,+,-,-,-,-,+,+,+,+,-}
			\tabpath{-,-,-,+,-,-,-,-,-,+,+,+,+,+,+,+}
			\tabpath{-,+,-,+,-,-,-,-,-,+,+,+,+,+,+}
			\tabpath{-,-,+,-,-,+,-,-,-,-}
			\tabpath{-,-,+,-,-,-,+,-,-,+,-}
			\tabpath{-,-,+,-,-,-,-,+,-,+,+,-}
			\tabpath{-,-,+,-,-,-,-,-,+,+,+,+,-}
			
			\node at (8,-3) {$C'$};
			\node at (5,0) {$D'$};
			\node at (9,2) {$\tilde{\bfZ}\cup \{C',D'\}$};
			\end{tikzpicture}
	
		\end{tabular}
	\end{center}
	\caption{An illustration of \Cref{claimpart}.}\label{finalpict}
\end{figure}
Since $\tilde{\bfZ}\cup\{C',D'\}$ is of type 1,  by $A(\lambda,\mu,\tilde{\bfZ}\cup\{C',D'\})$, we have
\begin{equation}\label{DZ}f_{D'}\circ f_{\tilde{\bfZ}\cup\{C'\}}\in \spa \langle f_\bfV \mid \bfV \in Y(\bfR,D) \rangle.
\end{equation}

Notice that $\ell(D')>\ell(D)$. From \Cref{claimpart} it follows that 
for any $\bfS\prec \tilde{\bfZ}\cup \{C'\}$ we have $Y(\bfS,D')\prec \tilde{\bfZ}\cup \{C',D'\}\prec X(\bfR,D)$, hence $Y(\bfS,D')\cug Y(\bfR,D)$.	
We can now apply by hypothesis (\ref{hyp6}) $B(\lambda,\mu,D',\bfS)$ to obtain
\begin{equation}\label{DS}
	f_{D'}\circ f_{\bfS}\in \spa \langle f_\bfV \mid \bfV \in Y(\bfR,D) \rangle.
\end{equation}

Combining \eqref{DCDC},\eqref{DZ} and \eqref{DS} we finally obtain
\[f_D\circ f_{\bfR,\tilde{o}}\cug \spa \langle f_\bfV \mid \bfV \in Y(\bfR,D) \rangle.\qedhere\]
\end{itemize}
\end{proof}

\section{Bases of the Intersection Cohomology of Schubert Varieties}

In \Cref{mainthm} and \Cref{maincoro} we have computed bases of the Hom spaces between indecomposable objects in $\sbim^I$. In this section, we apply these results to obtain description of bases of the indecomposable bimodules $B_\lambda^I\in \sbim^I$.

Let $\lambda$ and $\mu$ be paths with $\lambda\leq \mu$ and let $\bfP\in \Conf^1(\lambda,\mu)$.
We define $F_\bfP:=f_\bfP(1^\otimes)\in B_\mu^I$. It is a homogeneous element of degree $|\bfP|-\ell(\lambda)$.

\begin{definition}
	Let $B^I\in \sbim^I$. We say that a $R$-basis $\{b_i\}$ of $B^I$ is \emph{compatible with the support filtration} if for any $\lambda\in \Lambda_{n,i}$ the set $\{b_i\}\cap \Gamma^I_{\leq\lambda}(B^I)$ is a basis of $\Gamma^I_{\leq\lambda}(B^I)$.
\end{definition}

\begin{prop}
	For a fixed path $\mu$, the set $\{F_\bfP\}$, where $\bfP$ runs over all the sets $\Conf^1(\lambda,\mu)$ with $\lambda\leq \mu$, is a $R$-basis of $B_\mu^I$ compatible with the support filtration.
\end{prop}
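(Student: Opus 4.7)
The plan is to reduce the statement to the first part of Theorem \ref{fpbasis} by using the evaluation map $f\mapsto f(1^{\otimes})$ together with a downward induction along the support filtration.

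First I would set up the comparison. For each $\lambda\leq \mu$, consider the evaluation map
\[\ev_{\lambda}:\Hom^{\bullet}(B_{\lambda}^I,B_{\mu}^I)\lra B_{\mu}^I,\qquad f\mapsto f(1^{\otimes}).\]
Because $1^{\otimes}\in B_{\lambda}^I$ generates $B_{\lambda}^I$ away from bimodules supported strictly below $\lambda$, one checks that $\ev_{\lambda}$ sends $\Hom^{\bullet}_{\not <\lambda}(B_{\lambda}^I,B_{\mu}^I)$ into $\Gamma^I_{\lambda}(B_{\mu}^I)$ and that the induced map
\[\overline{\ev_{\lambda}}:\Hom^{\bullet}_{\not <\lambda}(B_{\lambda}^I,B_{\mu}^I)\lra \Gamma^I_{\lambda}(B_{\mu}^I)/\Gamma^I_{<\lambda}(B_{\mu}^I)\]
is injective: any $f$ with $f(1^{\otimes})\in \Gamma^I_{<\lambda}(B_{\mu}^I)$ factors through a bimodule supported below $\lambda$, hence belongs to $\Hom^{\bullet}_{<\lambda}$.

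Next I would compare graded ranks. By Theorem \ref{SHF} the graded rank of $\Hom^{\bullet}_{\not <\lambda}(B_{\lambda}^I,B_{\mu}^I)$ is $v^{-\ell(\lambda)}h^I_{\lambda,\mu}(v)$, and by Theorem \ref{SZJ} this equals $v^{-\ell(\lambda)}\sum_{\bfP\in \Conf^1(\lambda,\mu)} v^{|\bfP|}=\sum_{\bfP} v^{|\bfP|-\ell(\lambda)}$, which matches exactly the graded rank of the $\lambda$-subquotient of the support filtration on $B_{\mu}^I$. Hence $\overline{\ev_{\lambda}}$ is an isomorphism of graded left $R$-modules. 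Applying it to the basis $\{f_{\bfP}\}_{\bfP\in\Conf^1(\lambda,\mu)}$ provided by the first part of Theorem \ref{fpbasis} shows that the images $\{F_{\bfP}\}_{\bfP\in\Conf^1(\lambda,\mu)}$ descend to a graded $R$-basis of $\Gamma^I_{\lambda}(B_{\mu}^I)/\Gamma^I_{<\lambda}(B_{\mu}^I)$.

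I would then conclude by induction on $\lambda$ in the Bruhat order. The base case is $\lambda=\lambda^{id}$, where $\Gamma^I_{<\lambda}(B_{\mu}^I)=0$ and the statement is immediate from the previous step. For the inductive step, assume $\{F_{\bfQ}:\bfQ\in \Conf^1(\lambda',\mu),\,\lambda'<\lambda\}$ is an $R$-basis of $\Gamma^I_{<\lambda}(B_{\mu}^I)$; combining with the basis of the graded piece at $\lambda$ established above lifts to an $R$-basis of $\Gamma^I_{\lambda}(B_{\mu}^I)$. Taking $\lambda=\mu$ yields a basis of $B_{\mu}^I$ itself, and compatibility with the support filtration is built into the construction.

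The main obstacle is the identification $\overline{\ev_{\lambda}}$ as a well-defined isomorphism onto the subquotient of the support filtration. The injectivity amounts to saying that $1^{\otimes}$ is a generator of $B_\lambda^I$ modulo lower supports (so that vanishing of $f(1^{\otimes})$ modulo $\Gamma^I_{<\lambda}$ forces $f$ itself into $\Hom^{\bullet}_{<\lambda}$); this is the expected Soergel-bimodule analogue of the localisation theorem in equivariant intersection cohomology, and once it is in place the graded-rank matching via Theorems \ref{SHF} and \ref{SZJ} closes the argument without further combinatorial input.
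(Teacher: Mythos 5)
Your proof takes a genuinely different route from the paper's. The paper spans $\Gamma_{\leq\lambda}^I B_\mu^I$ via the cellular basis $\{f_\bfP\circ g_\bfQ\}$ of Theorem \ref{fpbasis} combined with \cite[Lemma 4.7]{Pat4}, and then computes $g_\bfQ(1^\otimes)$ explicitly (it equals $1^\otimes$ when $\bfQ$ consists only of single boxes and $0$ otherwise, by degree reasons), so that only the $F_\bfP$'s remain. You instead use only the first part of Theorem \ref{fpbasis}, together with the filtration-graded identification $\Hom^\bullet_{\not<\lambda}(B_\lambda^I,B_\mu^I)\cong \Gamma_{\leq\lambda}^I B_\mu^I/\Gamma_{<\lambda}^I B_\mu^I$ induced by evaluation at $1^\otimes$, and then induct along the support filtration. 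This avoids the cellular machinery and the explicit $g_\bfQ$ computation, and is arguably conceptually cleaner.

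The one substantive point that needs shoring up is the injectivity of the induced evaluation map: that $f(1^\otimes)\in\Gamma^I_{<\lambda}(B_\mu^I)$ forces $f\in\Hom^\bullet_{<\lambda}(B_\lambda^I,B_\mu^I)$. You assert this as the ``expected analogue of localisation,'' but it is not automatic that a morphism whose image lands in $\Gamma^I_{<\lambda}(B_\mu^I)$ factors through objects $B_\nu^I$ with $\nu<\lambda$ of the category $\sbim^I$; this is precisely the content that \cite[Lemma 4.7]{Pat4} supplies, and it should be cited rather than taken as given. There is also a small bookkeeping slip: $\grrk\Hom^\bullet_{\not<\lambda}(B_\lambda^I,B_\mu^I)=h^I_{\mu,\lambda}(v)$, with no $v^{-\ell(\lambda)}$ prefactor; the shift by $-\ell(\lambda)=\deg(1^\otimes)$ is carried by the evaluation map itself, and it is the target subquotient whose graded rank is $\sum_{\bfP\in\Conf^1(\lambda,\mu)} v^{|\bfP|-\ell(\lambda)}$, so the rank comparison still goes through once the shift is placed on the correct side. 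With these two adjustments the argument closes.
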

\begin{proof}
	By comparing the graded dimensions, it is enough to check that the set $\{F_\bfP\}$, with $\bfP\in \Conf^1(\nu,\mu)$ and $\nu\leq \lambda$ generates $\Gamma_{\leq \lambda}^IB_\mu^I$.

	By \cite[Lemma 4.7]{Pat4} and \Cref{maincoro} we know that 
	\begin{equation}\label{gammalambda}\Gamma_{\leq \lambda}^IB_\mu^I=\spa_R\left\langle f_\bfP\circ g_\bfQ({1^\otimes})\mid \bfP\in \Conf^1(\nu,\mu),\bfQ\in \Conf^1(\nu,\lambda)\text{ and }\nu\leq \lambda\right\rangle.\end{equation}
	
	If $D$ consists of a single box, then it follows by the explicit description of the morphisms given in \Cref{diagramsec} that $g_D(1^\otimes)=1^\otimes$. Otherwise, if $\ell(D)>1$ by degree reasons we have that $g_D(1^\otimes)=0$. It follows that 
	\begin{equation}\label{gcbot}
g_\bfQ({1^\otimes})=\begin{cases}{1^\otimes}\in B_\nu^I& \text{if }|\bfQ|=\ell(\lambda)-\ell(\nu)\\0& \text{otherwise.} \end{cases}
	\end{equation}
	Combining \eqref{gammalambda} and \eqref{gcbot}, the proposition follows.
\end{proof}

The basis $\{F_\bfP\}$, as the basis $\{f_\bfP\}$, critically depends on the admissible orders chosen. More generally, for any $o'\in \adm(\bfP)$ we can define $F_{\bfP,o'}=f_{\bfP,o'}(1^\otimes)$. We have
\[F_{\bfP,o'}\in cF_\bfP +\spa \langle F_\bfQ \mid \bfQ\prec \bfP\rangle.\] 
for some $c\in \bbQ^*$.
Let $\barF_{\bfP,o'}$ denote the element $1\otimes F_{\bfP,o}\in \bbQ\otimes_R B_\mu^I$.
Then we have 
\[\barF_{\bfP,o'}\in c\barF_\bfP +\spa_\bbQ \langle \barF_\bfQ \mid \bfQ\prec \bfP, |\bfQ|=|\bfP|\rangle.\] 

\begin{example}
The following is the smallest example of two Dyck partitions $\bfP$ and $\bfQ$ of type 1 such that $|\bfP|=|\bfQ|$ and $\bfP\succ \bfQ$.

\begin{center}
\begin{tikzpicture}[x=\boxmini,y=\boxmini]
\tikzset{vertex/.style={}}
\tikzset{edge/.style={very thick}}
\tabpath{+,+,-,+,-,+,-}
\tabpath{-,-,-,+,+,+,+}
\tikzset{edge/.style={}}
\tabpath{+,-,-,+,+,-}
\tabpath{-,+,-,+,-}
\node at (0,-3) {$\bfP$};
\end{tikzpicture}\qquad 
\begin{tikzpicture}[x=\boxmini,y=\boxmini]
\tikzset{vertex/.style={}}
\tikzset{edge/.style={very thick}}

\tabpath{+,+,-,+,-,+,-}
\tabpath{-,-,-,+,+,+,+}
\tikzset{edge/.style={}}
\tabpath{+,-,-,+,-,+}
\tabpath{-,+,-,-}
\tabpath{-,-,+}
\node at (0,-3) {$\bfQ$};
\end{tikzpicture}
\end{center}
\end{example}

\begin{lemma}
	If the tableau corresponding to $\mu$ has only $2$ rows (or only $2$ columns) then there exists no Dyck partitions $\bfP,\bfQ\in \Conf^1(\lambda,\mu)$ such that $\bfP\succ \bfQ$ and $|\bfP|=|\bfQ|$.
\end{lemma}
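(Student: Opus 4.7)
The plan is to classify $\Conf^1(\lambda,\mu)$ explicitly when $\mu$ has a 2-row partition $\mu = (a,b)$; the 2-column case is analogous (with ``SW-NE diagonals'' replaced by ``NW-SE diagonals'' and ``just above'' analyzed symmetrically) and also follows via the transposition duality $\Gr(i,n)\leftrightarrow\Gr(n-i,n)$. In a 2-row tableau the boxes of $\calA(\mu)$ lie on two parallel SW-NE diagonals, a short one of length $b$ from row $2$ and a long one of length $a$ from row $1$, so every Dyck strip in $\calA(\lambda,\mu)$ is either a single box or a 3-box ``zigzag'' consisting of one box on the short diagonal and two boxes on the long diagonal, with top at some row-$2$ column $c$.

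The key step is the Type 1 analysis of such a zigzag. For a zigzag $D$ with top at $(x_0,y_0)$, the three boxes of the plane ``just above'' the boxes of $D$ are the off-tableau position $(x_0,y_0+2)$ (which lies on the diagonal strictly above the short one) together with $(x_0+1, y_0+1)$ and $(x_0+2,y_0+2)$, both on the short diagonal. Since $(x_0,y_0+2)$ lies outside the 2-row tableau, it is contained in no strip of $\bfP$; the Type 1 condition therefore forces that none of the other shadow boxes lies in any strip either, i.e.\ both short-diagonal shadow positions must lie outside the skew shape $\mu/\lambda$. Parametrizing the zigzag by $c$ (its row-$2$ column) and writing $\lambda = (a',b')$, the shadow positions are at row-$2$ columns $c+1$ and $c+2$; a direct case analysis on the constraints ``$c+j \leq b'$ or $c+j > b$'' for $j=1,2$ shows that the only feasible value is $c = b$, which in turn requires $a>b$ and $b>b'$.

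It follows that $\Conf^1(\lambda,\mu)$ consists of at most two elements: the all-singletons partition and, when $a > b$ and $b > b'$, the partition formed by the unique rightmost zigzag at $c = b$ together with singletons on the remaining boxes. The zigzag replaces three singletons with one strip, so these two partitions always differ in cardinality by exactly $2$. In particular no two elements of $\Conf^1(\lambda,\mu)$ have the same number of strips, so no pair $\bfP\succ\bfQ$ in $\Conf^1(\lambda,\mu)$ can satisfy $|\bfP|=|\bfQ|$. The main subtlety is the ``off-tableau shadow'' observation: the fact that the shadow position $(x_0,y_0+2)$ always lies off the 2-row tableau is what makes Type 1 so restrictive, excluding all non-rightmost zigzags uniformly in $\lambda$.
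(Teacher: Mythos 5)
Your proof is correct and makes fully explicit the argument that the paper's proof only sketches. The paper notes that in a 2-row tableau every type-1 Dyck partition has at most one strip of length $>1$, and claims the lemma ``easily follows.'' Your analysis supplies the step that makes this precise: the Type 1 condition --- via the observation that one of the three ``just above'' shadow boxes of any length-three zigzag necessarily lands on a non-existent third row of the tableau --- forces the other two shadow boxes off the skew shape as well, which pins the zigzag to the \emph{unique} admissible position $c=b$. This positional uniqueness is genuinely needed and is not an immediate consequence of ``at most one long strip per partition'': that weaker statement alone would still permit two distinct type-1 partitions, each containing one zigzag but at different heights, and such a pair would be $\succ$-comparable with equal cardinality. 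So your explicit classification is exactly the right way to justify the paper's conclusion. One small omission worth flagging: for the zigzag at $c=b$ to lie inside the skew shape one also needs $a'<b$ (so that its row-1 box is present), in addition to $a>b$ and $b>b'$; this does not affect the conclusion, since in all cases there is still at most one feasible zigzag and hence at most two elements of $\Conf^1(\lambda,\mu)$, of cardinalities differing by two.
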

\begin{proof}
	If the tableau of $\mu$ has only two rows, then every Dyck partition $\bfP\in \Conf^1(\lambda,\mu)$ for some $\lambda\leq \mu$ has at most one strip of length $>1$. The proposition easily follows.
\end{proof}

\begin{cor}
	If the tableau corresponding to $\mu$ has only $2$ rows (or only $2$ columns) the element $\barF_{\bfP,o}\in \bar{B}_\mu^I=IH(X_\mu,\bbQ)$ does not depend on $o\in \adm(\bfP)$ up to a scalar.
\end{cor}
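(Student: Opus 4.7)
The proof should be essentially immediate from the two ingredients stated just before the corollary. Here is the plan.

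First I would recall the key relation established earlier, namely that for any admissible order $o'\in \adm(\bfP)$ there exists $c\in \bbQ^*$ with
\[ \barF_{\bfP,o'}\in c\,\barF_\bfP +\spa_\bbQ \langle \barF_\bfQ \mid \bfQ\prec \bfP,\ |\bfQ|=|\bfP|\rangle. \]
The dependence of $\barF_{\bfP,o'}$ on the choice of order $o'$ is therefore controlled entirely by the ``error space'' spanned by $\barF_\bfQ$ with $\bfQ\prec \bfP$ and $|\bfQ|=|\bfP|$. So the task reduces to showing that this error space is trivial in the 2-row (or 2-column) case.

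Next I would invoke the immediately preceding lemma, which states precisely that if the tableau corresponding to $\mu$ has only 2 rows (or only 2 columns), then there exist no Dyck partitions $\bfP,\bfQ\in \Conf^1(\lambda,\mu)$ with $\bfP\succ \bfQ$ and $|\bfP|=|\bfQ|$. Consequently the set $\{\bfQ\in \Conf^1(\lambda,\mu) \mid \bfQ\prec \bfP,\ |\bfQ|=|\bfP|\}$ is empty, so the error space $\spa_\bbQ \langle \barF_\bfQ \mid \bfQ\prec \bfP,\ |\bfQ|=|\bfP|\rangle$ is zero.

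Combining these two observations, we conclude $\barF_{\bfP,o'}=c\,\barF_\bfP$ for some $c\in \bbQ^*$, which is exactly the claim that $\barF_{\bfP,o}$ is independent of the admissible order $o\in \adm(\bfP)$ up to a nonzero scalar. There is no real obstacle here: the corollary is a direct formal consequence of the preceding lemma together with the previously displayed expansion of $\barF_{\bfP,o'}$ in terms of the fixed basis element $\barF_\bfP$ and strictly smaller (in the partial order $\succ$) elements of the same degree.
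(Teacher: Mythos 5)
Your argument is correct and is exactly the intended one: the displayed expansion $\barF_{\bfP,o'}\in c\barF_\bfP +\spa_\bbQ \langle \barF_\bfQ \mid \bfQ\prec \bfP, |\bfQ|=|\bfP|\rangle$ combined with the immediately preceding lemma (which empties that span in the two-row/two-column case) gives the corollary at once. The paper leaves the proof implicit precisely because it follows formally in this way.
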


\subsection{Comparison with the Schubert basis}

For any $\lambda\in \Lambda_{n,i}$ the Schubert variety $X_\lambda\cug \Gr(i,n)$ is oriented and $T$-invariant, so it induces a class in the $T$-equivariant Borel--Moore homology (cf. \cite[\S 1]{Brion2}) \[[X_\lambda]\in H^{BM}_{2\ell(\lambda),T}(\Gr(i,n),\bbQ).\] 
The set $\{[X_\lambda]\}_{\lambda\in \Lambda_{n,i}}$ is a $R$-basis of $H^{BM}_{\bullet,T}(\Gr(i,n),\bbQ)$.
By taking the dual basis, we obtain a left $R$-basis $\{\calS_\lambda\}_{\lambda\in \Lambda_{n,i}}$ of $H^\bullet_T(\Gr(i,n),\bbQ)$, called the \emph{Schubert basis}.

For $\mu\in \Lambda_{n,i}$ consider the embedding $i_\mu:X_\mu\hookrightarrow \Gr(i,n)$. Then $\{i_\mu^*\calS_{\lambda}\}_{\lambda\leq \mu}$ is also a left $R$-basis of $H^\bullet_T(X_\mu,\bbQ)$.
 By abuse of notation, we denote $i_\mu^*\calS_\lambda$ simply by $\calS_\lambda$. Using the embedding $H_T^\bullet(X_\mu,\bbQ)(\ell(\mu))\cug IH_T^\bullet(X_\mu,\bbQ)=B_\mu^I$ we regard $\calS_\lambda$ as an element of $(B_\mu^I)^{2\ell(\lambda)-\ell(\mu)}$.

Let $\bfU(\lambda)$ be the maximal Dyck partition in $\Conf^1(\lambda,\mu)$, i.e. the one which consists only of single boxes. In the previous section we introduced the basis $\{F_\bfP\}$ of $B_\mu^I$. Our final goal is to show that $\calS_\lambda$ can be obtained as the dual of the basis element $F_{\bfU(\lambda)}$.

Since $B_\mu^I\cong \bbD B_\mu^I$ and $\End^0(B_\mu^I)\cong \bbQ$ there exists a unique up to a scalar left invariant form on $B_\mu^I$. Let $(\vec{I},\vec{J})$ be a reduced translation pair such that $B_\mu^{I}\cong BS(\vec{I},\vec{J})$. The intersection form $\langle -,-\rangle_{(\vec{I},\vec{J})}$ does not depend on the chosen reduced translation pair, so we can define $\langle - ,-\rangle_{\mu}:=\langle -,-\rangle_{(\vec{I},\vec{J})}$.

\begin{lemma}
	After having identified $B_\mu^I$ with $BS(\vec{I},\vec{J})$ via the isomorphisms sending $1^\otimes$ to $1^\otimes$ we have
	$F_{\bfU(\lambda^{id})}=\ctop(\vec{I},\vec{J})$ up to lower degree terms.
\end{lemma}
\begin{proof}
	Both $F_{\bfU(\lambda^{id})}$ and $\ctop(\vec{I},\vec{J})$ are in the image of a morphism of degree $\ell(\vec{I},\vec{J})$ form $R_I$. This means that there exists a scalar $q\in \bbQ$, such that $F_{\bfU(\lambda^{id})}=q\ctop$ up to lower terms in $\spa_R \langle F_\bfQ \mid \bfQ\prec \bfU(\lambda^{id})\rangle$.
	
	 More precisely, we have $F_{\bfU(\lambda^{id})}=f_{\bfU(\lambda^{id})}(1)$ and $\ctop(\vec{I},\vec{J})=f_{\carcd} \circ (f_{\carcd}\otimes 1)\circ \ldots \circ (f_{\carcd}\otimes 1)(1)$ as in \eqref{tantef}. 
	 	Notice that $f_{\bfU(\lambda^{id})}$ is the composition of morphisms of type 1a)-1d)  (cf. \Cref{diagramsec}), and for each of these morphisms, when we take the flip, we obtain a morphism which sends $1^\otimes$ to $1^\otimes$. The same holds for the morphisms of the form $f_{\carcd} \otimes 1$. Since the flip of both morphism sends $1^\otimes$ to $1^\otimes$ we deduce that $q=1$.
\end{proof}

  Define $\ctop(\mu):=F_{\bfU(\lambda^{id})}$. Notice that $\ctop(\mu)$ does not depend on the admissible order chosen for $\bfU(\lambda^{id})$ up to smaller terms. Moreover, $\form_\mu$ is the unique invariant form 
$\form_\mu: B_\mu^I\times B_\mu^I\raw R$
such that
\[\langle {1^\otimes},\ctop(\mu)\rangle_\mu =1. \]
Recall by \Cref{adjoint} that for any Dyck strip $D$ the maps $f_D$ and $g_D$ are adjoint to each other with respect to the form $\form_\mu$.

We look now more closely at Dyck strips which are single boxes. In this case we can give an alternative description of the corresponding degree one maps.

Let $\lambda,\mu$ be paths with $\mu=\lambda+C$. Assume that the Dyck strip $C$ is a single box, and let $j$ be the label of $C$ (cf. \Cref{boxdef}) and $s_j$ be the corresponding simple reflection.
 Let $w_\mu$ and $w_\lambda$ be the elements in $W^I$ corresponding to $\mu$ and $\lambda$, so that we have $w_\mu=s_jw_\lambda$.

 The bimodule $B_\mu^I$ is a direct summand of $B_{s_j}B_\lambda^I$, and since $B_{s_j}B_\lambda^I$ is perverse this summand is uniquely determined. We denote by $\iota_\mu: B_\mu^I\raw B_{s_j}B_\lambda^I$ and $\pi_\mu: B_{s_j}B_\lambda^I\raw B_\mu^I$ respectively the inclusion and the projection of this summand, so that $e_\mu=\iota_\mu\circ \pi_\mu\in \End^0(B_sB_\lambda^I)$ is the corresponding idempotent.

We can choose $\iota_\mu$ so that $\iota_\mu(1^\otimes)=1^\otimes$. This also forces $\pi_\mu(1^\otimes)=1^\otimes$. 
Since the vector space $\Hom^0(B_\mu,B_{s_j}B_\lambda)$ is one-dimensional, by \Cref{flippreserves1} we have $\bar{\iota_\mu}(1^\otimes)=1^\otimes$, and therefore $\bar{\iota_\mu}=\pi_\mu$.

\begin{lemma}
	We have \[f_C= \pi_\mu\circ (f_{\carcd}\otimes \Iden_{B_\lambda^I}):R\otimes_R B_\lambda^I=B_\lambda^I\rightarrow B_\mu^I(1)\]
	where $f_{\carcd}: R\raw B_{s_j}(1)$ is the morphism defined in \Cref{boxes}.
\end{lemma}
\begin{proof}

Let $\phi:= \pi_\mu\circ (f_{\carcd}\otimes \Iden_{B_\lambda^I})$.  The morphism $\phi$ is of degree $1$, hence it is a scalar multiple of $f_C$. 
After taking the adjoint, we have 
\[\bar{\phi}=(\bar{f_{\carcd}}\otimes \Iden_{B_\lambda^I})\circ \bar{\pi_\mu}=(f_{\carcu}\otimes \Iden_{B_\lambda^I})\circ \iota_\mu\]
where $f_{\carcu}:B_{s_j}\raw R(1)$ is the map defined by $f\otimes g\mapsto fg$.
It follows that $\bar{\phi}({1^\otimes})={1^\otimes}=g_C({1^\otimes})$, and we deduce $\bar{\phi}=g_C=\bar{f_C}$, hence $\phi=f_C$.
\end{proof}

\begin{definition}
	Let $\lambda$ be a path. We denote by $\boxplus(\lambda)$ the set of Dyck strips of length one (i.e. boxes) that can be added to $\lambda$ and by $\boxminus(\lambda)$ the set of boxes that can be removed from $\lambda$.
\end{definition}

Recall that the we have fixed an index $i$ so that $\{s_i\}=S\setminus I$.
The $R$-module structure on $H^\bullet_T(X_\mu,\bbQ)$ can be deduced from the equivariant Pieri's formula (see for example \cite[Proposition 2]{KnT}):
\begin{equation}\label{Pieri}
\calS_\lambda\cdot h=w_{\lambda}(h) \calS_\lambda+ \partial_{i}(h) \sum_{C\in \boxplus(\lambda)} \calS_{\lambda+C}.\end{equation}
There is a similar formula which holds in $B_\mu^I$:

\begin{prop}[Equivariant Pieri's formula in intersection cohomology]\label{newpieri}
Let $\nu,\mu$ be paths with $\nu\leq \mu$.	Let $\bfP\in\Conf^1(\nu,\mu)$, $o\in \adm(\bfP)$ and $h\in (\frh^*)^I$. Then we have 
	\[F_{\bfP,o}\cdot h=w_\nu(h)F_{\bfP,o}+\partial_{i}(h)\sum_{C\in \boxminus(\nu)} F_{ \{C\}\cup \bfP,(C,o)},\]
			where $(C,o)\in \adm(\{C\}\cup\bfP)$ is the order beginning with $C$ and continuing as in $o$.
\end{prop}
\begin{proof}	
	If $C$ is a Dyck strip we simply write $F_C$ instead of $F_{\{C\}}$. We first show by induction on $\nu$ that
	\begin{equation}\label{Pieribot}
	1^\otimes\cdot h=w_\nu(h){1^\otimes} +\partial_{i}(h)\sum_{C\in \boxminus(\nu)}F_{C}\in B_\nu^I.
	\end{equation}
	Let $D\in \boxminus(\nu)$ and let $\lambda:=\nu-D$. Let $s_j$ be the simple reflection corresponding to the label of $D$.
	 By the nil-Hecke relation \cite[(5.2)]{EW2} we have \[\partial_{j}(w_\nu(h))f_{\carcd}(1)=w_{\nu}(h)\otimes 1-1\otimes w_{\lambda}(h)\in B_{s_j},\]
	 and by induction 
	 \begin{equation}\label{nhrepeated}
	 \partial_{j}(w_\nu(h))(f_{\carcd}(1)\otimes 1^\otimes)=w_\nu(h){1^\otimes}-1^\otimes\cdot h+\partial_{i}(h)\sum_{C\in \boxminus(\lambda)}1\otimes F_{C}\in B_{s_j}B_\lambda^I.
	 \end{equation}
	 
	 For $C\in \boxminus(\lambda)$ we have $F_C\in \Gamma^I_{\leq \lambda-C} (B_\lambda^I)$.
	 Assume $j$ is not a valley for $\lambda-C$, or equivalently that $C\not \in \boxminus(\nu)$. Then also $1\otimes F_C\in \Gamma^I_{\leq \lambda-C} (B_{s_j}B_\lambda^I)$, thus 
	 $\pi_\nu(1\otimes F_C)\cug \Gamma^I_{\leq \lambda-C} (B_\nu^I)$. Since $\Gamma^I_{\leq \lambda-C} (B_\nu^I)$ is concentrated in degrees $\geq -\ell(\nu)+4$ we must have $\pi_\nu(1\otimes F_C)=0$.
	 
	 Assume now that $j$ is a valley for $\lambda-C$ and $C\in \boxminus(\nu)$. In this case the boxes $C$ and $D$ are distant, and we have the following diagram.
	 	 \begin{center} 
	 	\begin{tikzpicture}
	 	\matrix(m)[matrix of math nodes, column sep=50pt, row sep=20pt]
	 	{B_{\nu-C} & B_{s_j}B_{\lambda-C}^I\\
	 		B_\nu & B_{s_j}B_\lambda^I\\};
	 	\path[->] (m-1-1) edge node[above]{$\iota_{\nu-C}$}(m-1-2)
	 	(m-1-1) edge node[left] {$f_C$} (m-2-1)
	 	(m-1-2) edge node[left] {$1\otimes f_C$} (m-2-2)
	 	(m-2-2) edge node[above] {$\pi_\nu$} (m-2-1);
	 	\end{tikzpicture}
	 \end{center}
	 Since $\Hom^1(B_{\nu-C},B_\nu)\cong \bbQ$, the diagram is commutative up to a scalar. We can check that it does commute by taking the flip because we have
	 $\pi_{\nu-C}\circ (1\otimes g_C)\circ \iota_\nu(1^\otimes)=1^\otimes=g_C(1^\otimes)$. 
	 Thus, we have
	 $F_C=\pi_\nu(1\otimes F_C)\in B_\nu^I$. We can now apply $\pi_\nu$ to \eqref{nhrepeated} and get
	\[\partial_{j}(w_\nu(h))\pi_\nu(f_{\carcd}(1)\otimes 1^\otimes)=w_\nu(h){1^\otimes}-1^\otimes\cdot h+\partial_{i}(h)\sum_{C\in \boxminus(\nu)\setminus\{D\}}F_C\in B_{\nu}^I.\]

	Recall that $F_D=\pi_\nu(f_{\carcd}(1)\otimes 1^\otimes)$.
	To show \eqref{Pieribot}, it remains to check that $\partial_{j}(w_\nu(h))=-\partial_{i}(h)$. Consider the reflection $t:=w_\lambda^{-1} s_j w_\lambda\in W$. It is not contained in $W_I$, so we can write $t=ws_iw^{-1}$ for some $w\in W_I$. Now $\partial_{j}(w_\nu(h))=-\partial_{j}(w_\lambda(h))=-\partial_{i}(w(h))=-\partial_{i}(h)$.
	
	Finally, applying $f_{\bfP,o}$ to \eqref{Pieribot} we obtain the desired statement.
\end{proof}

\begin{lemma}
	Let $\lambda,\nu\leq \mu$ and $\bfP\in \Conf^1(\nu,\mu)$. We have 
	\[\langle F_{\bfP},\calS_\lambda\rangle_\mu=\begin{cases}
	1 & \text{if } \bfP=\bfU(\lambda)\\
	0 & \text{otherwise.}
	\end{cases}\]
\end{lemma}
\begin{proof}

	If $C$ is a Dyck strip that can be removed from $\mu$, then $g_C: B_\mu^I\raw B_{\mu-C}^I$ is a morphism of $(R,R^I)$-bimodules, hence it also commutes with the $H^\bullet_T(\Gr(i,n),\bbQ)$-action. By \eqref{gcbot} it follows that 
	 for $\bfP\in \Conf^1(\nu,\mu)$ we have 
 	\[g_{\bfP,o}(\calS_\lambda)=\begin{cases}\calS_\lambda& \text{if $\bfP$ only consists of single boxes and }\lambda\leq \nu\\
 	0& \text{otherwise.}\end{cases}\]

 	From the adjointness, we have $\langle F_{\bfP},\calS_\lambda\rangle_\mu = \langle {1^\otimes}, g_{\bfP}(\calS_\lambda)\rangle_\nu$. It remains to show that $\langle {1^\otimes}, \calS_\lambda\rangle_\nu=\delta_{\lambda,\nu}$. We show this by induction on $\nu$.
 	
 	Recall that $\calS_\lambda=0$ in $B_\nu^I$ unless $\lambda\leq \nu$. 
 	If $\lambda<\nu$ by degree reasons it follows $\langle 1^\otimes,\calS_\lambda\rangle_\nu=0$. Assume now $\lambda=\nu$ and let $h\in (\frh^*)^I$ be such that $\partial_{i}(h)=1$. 
 	Let $D\in \boxminus(\nu)$.
 	Then by \eqref{Pieri} and \eqref{Pieribot} we have
 	\begin{multline*} \langle 
 	{1^\otimes}, \calS_\nu\rangle_\nu =\langle {1^\otimes}, \calS_{\nu - D}\cdot h-w_{\nu - D}(h)\cdot \calS_{\nu - D}\rangle_\nu = \langle {1^\otimes}\cdot h -w_{\nu - D}(h)\cdot  {1^\otimes}, \calS_{\nu - D}\rangle_\nu = \\ 
 	=\left\langle\sum_{C\in \boxminus(\nu)} F_C, \calS_{\nu - D}\right\rangle_\nu= \sum_{C\in\boxminus(\nu)}\left\langle {1^\otimes}, g_C(\calS_{\nu - D})\right\rangle_{\nu - C}.
 	\end{multline*}
 	Now, $g_C(\calS_{\nu - D})=0\in B_{\nu-C}^I$ unless $D=C$. We obtain $\langle 
 	{1^\otimes}, \calS_\nu\rangle_\nu=\left\langle {1^\otimes}, g_D(\calS_{\nu - D})\right\rangle_{\nu - D}$ and we conclude by induction since because 
 	\[\langle  \langle {1^\otimes},g_D( \calS_{\nu-D})\rangle_{\nu-D}=\langle 1^\otimes,S_{\nu-D}\rangle_{\nu-D}=1. \qedhere\]
\end{proof}

\begin{cor}
	Let $\{F_{\bfP}^*\}$ be the dual basis of $\{F_{\bfP}\}$ with respect to the form $\langle-,-\rangle_\mu$. Then $\calS_\lambda=F_{\bfU(\lambda)}^*$, where $\bfU(\lambda)\in\Conf^1(\lambda,\mu)$ is the Dyck partition which consists only of single boxes.
\end{cor}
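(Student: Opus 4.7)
The plan is essentially to read off the corollary directly from the previous one. The preceding corollary establishes precisely the duality pairing
\[\langle F_\bfP, \calS_\lambda\rangle_\mu = \delta_{\bfP,\bfU(\lambda)},\]
which is the defining property of a dual basis vector. So once I know that $\{F_\bfP\}$ is a homogeneous $R$-basis of $B_\mu^I$ and that $\langle-,-\rangle_\mu$ is non-degenerate, the equality $\calS_\lambda = F_{\bfU(\lambda)}^*$ is immediate.

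First I would recall that $\{F_\bfP\}$ is a homogeneous $R$-basis of $B_\mu^I$, as shown in the proposition preceding the previous corollary (obtained by evaluating the basis $\{f_\bfP\}$ from Theorem \ref{fpbasis} on $1^\otimes$). Next I would verify non-degeneracy of $\langle-,-\rangle_\mu$: by construction this form is obtained from the intersection form on any realization of $B_\mu^I$ as a generalized Bott-Samelson bimodule $BS(\vec{I},\vec{J})$ via a reduced translation pair. Non-degeneracy follows from the decomposition $BS(\vec{I},\vec{J})=R\cdot \Delta(\vec{I},\vec{J})\oplus LT(BS(\vec{I},\vec{J}))$ together with the fact that $\form_\mu$ is the unique invariant form normalized by $\langle 1^\otimes,\Delta(\mu)\rangle_\mu=1$, so its dual basis to any $R$-basis of $B_\mu^I$ is well defined. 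Hence the dual basis $\{F_\bfP^*\}$ exists and is characterized by the pairing relations $\langle F_\bfQ,F_\bfP^*\rangle_\mu=\delta_{\bfP,\bfQ}$.

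Finally I would invoke the previous corollary: it asserts $\langle F_\bfP,\calS_\lambda\rangle_\mu=\delta_{\bfP,\bfU(\lambda)}$, so $\calS_\lambda$ satisfies exactly the defining property of $F_{\bfU(\lambda)}^*$. By uniqueness of the dual basis this forces $\calS_\lambda=F_{\bfU(\lambda)}^*$.

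There is no real obstacle here — the whole content has been done in the previous corollary (which is where Pieri's formula and the computation $\langle 1^\otimes,\calS_\nu\rangle_\nu=1$ were used). The only thing worth double-checking is the degree matching: since $F_{\bfU(\lambda)}$ has degree $|\bfU(\lambda)|-\ell(\lambda)=\ell(\mu)-2\ell(\lambda)$ in the shifted convention, its dual has degree $2\ell(\lambda)-\ell(\mu)$, which matches the degree of $\calS_\lambda$ inside $B_\mu^I=IH_T^\bullet(X_\mu,\bbQ)$ under the canonical embedding $H_T^\bullet(X_\mu,\bbQ)(\ell(\mu))\hookrightarrow IH_T^\bullet(X_\mu,\bbQ)$. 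This consistency of degrees ensures nothing slips through.
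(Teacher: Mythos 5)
Your proposal is correct and coincides with the paper's intended argument: the paper states this corollary without proof precisely because it follows immediately from the preceding corollary, exactly as you observe. The pairing $\langle F_\bfP,\calS_\lambda\rangle_\mu=\delta_{\bfP,\bfU(\lambda)}$ is the defining relation of the dual basis element $F_{\bfU(\lambda)}^*$, and once $\{F_\bfP\}$ is known to be a homogeneous $R$-basis of $B_\mu^I$ and $\form_\mu$ is non-degenerate (both established earlier), the identification $\calS_\lambda=F_{\bfU(\lambda)}^*$ is forced. Your degree-consistency check is a sound sanity check, though not strictly needed for the argument.
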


\appendix

\section{Appendix: Invariant forms on generalized Bott--Samelson bimodules}\label{appendix}

In this appendix we introduce invariant forms on generalized Bott-Samelson bimodules, generalizing the definitions from \cite[\S 3.1]{EW1}. We then  
show that flipping a singular diagram is the same as taking the adjoint with respect to the invariant forms.

Let $(W,S)$ be a Coxeter system and $I\subset S$ be finitary. Consider the category $\sbim^I$ of $I$-singular Soergel bimodules. Let $\frh^\vee$ be a representation of $W$ as in \cite[Proposition 2.1]{S4} with simple roots $\alpha_s\in \frh^\vee$ and simple coroots $\alpha_s\in \frh^\vee$.

For $s\in S$, let  $\partial_s:R\raw R(-2)$ denote the Demazure operator \[\partial_s(f)=\frac{f-s(f)}{\alpha_s}.\]
Note that if $f$ is of degree $2$ then $\partial_j(f)=\alpha_j^\vee(f)$. The Demazure operators satisfy the braid relations, hence for any $x\in W$ with reduced expression $\undx=s_1s_2\ldots s_k$ we can define $\partial_x:=\partial_{1}\partial_{2}\ldots \partial_{k}:R\raw R(-2k)$.

Let $B^I\in \sbim^I$ be a $I$-singular Soergel bimodule.
A left invariant bilinear form on $B$ is a graded bilinear map
\[\langle -,-\rangle: B^I\times B^I \raw R\]
such that for any $b,b'\in B^I$, $f\in R$ and $g \in R^I$ we have
\[\langle fb, b' \rangle = \langle b,fb'\rangle =f\langle b,b'\rangle \]
\[ \langle bg,b'\rangle=\langle b,b'g\rangle.\]

Let $(\vec{I},\vec{J})$ be a translation pair with $\vec{J}=(J_1,J_2,\ldots,,J_k)$ and $\vec{I}=(I_0,I_1,I_2 \ldots, I_k)$ and assume that $I_0=\emptyset$ and $I_k=I$. Consider the translation pair $(\vec{I'},\vec{J'})$ with \begin{equation}\label{IJprime}\vec{J'}=(J_2,J_3,\ldots,J_k)\qquad\text{ and }\qquad\vec{I'}=(\emptyset,I_2,I_3,\ldots,I_k).\end{equation} 
Recall the notation of \Cref{boxes}. The morphism $f_{\carcd}:R^{I_1}\raw R^{I_{1}}\otimes_{R^{J_1}}R^{I_1}(\ell(J_1)-\ell(I_1))$ induces a morphism 
\[ f_{\carcd}\otimes \Iden: BS(\vec{I'},\vec{J'})\raw BS(\vec{I},\vec{J}).\]

\begin{definition}
	We define, by induction on $k$, an element $\ctop(\vec{I},\vec{J})$ as $(f_{\carcd}\otimes \Iden)(\ctop(\vec{I'},\vec{J'}))$, where $\ctop((\emptyset),\emptyset)=1\in R$.
\end{definition}

If $\undw=s_1s_2\ldots s_k$ is a word with $s_i\in S$, then the ordinary Bott--Samelson bimodule $BS(\undw)$ can be realized as a generalized Bott--Samelson bimodule $BS(\vec{I},\vec{J})$, with $\vec{J}=(\{s_1\},\{s_2\},\ldots \{s_k\})$ and $\vec{I}=(\emptyset,\emptyset,\ldots,\emptyset)$. In this case, the corresponding element $\ctop(\undw):=\ctop(\vec{I},\vec{J})\in BS(\undw)$ coincides with the element $\ctop$ defined in \cite[\S 3.4]{EW1}.

Let $t\in W$ be a reflection. Then we can write $t=wsw^{-1}$ with $ws>w$ and $s\in S$. The root $\alpha_t:=w(\alpha_s)\in \frh^*$ is well defined and it is positive, i.e. it can be written as a positive linear combination of simple roots. We can consider the operator \[\partial_t^T:R \raw R(-2)\] 
defined by $\partial_t^T(f)=\frac{f-t(f)}{\alpha_t}$. (We use the non-standard notation $\partial_t^T$ to distinguish it from $\partial_t$, as they only coincide when $t\in S$.) If $t=wsw^{-1}$ with $ws>w$ and $s\in S$, then $\partial_t^T(f)=w\left(\partial_s({w}^{-1}(f))\right)$ for all $f\in R$. In particular, if $f\in R$ is of degree $2$, then $\partial_t^T(f)=\partial_s({w}^{-1}(f))=\alpha_s^\vee(w^{-1}(f))$.
We write $y\Rra{t}x$ if $t\in T$, $yt=x$ and $\ell(y)+1=\ell(x)$.

\begin{lemma}\label{demaformula}
	Let $x\in W$.
	Let $f,g\in R$ be such that $f$ is of degree $2$ and $g$ of degree $2\ell(x)-2$. Then
	\[ \partial_x(fg)= \sum_{y\Rra{t}x}\partial_t^T(f)\partial_y(g). \]
\end{lemma}
\begin{proof}
	The proof is by induction on $\ell(x)$. The case $\ell(x)\leq 1$ is trivial. Let $x=x's$ with $s\in S$ and $x>x'$. By induction on the length we obtain
	\[\partial_{x}(fg)=\partial_{x'}\partial_s(fg)=\partial_{x'}\left(g\partial_s(f)+s(f) \partial_s(g)\right)=\partial_s(f)\partial_{x'}(g)+\sum_{y'\Rra{u}x'} \partial_u^T(s(f))\partial_{y'}(\partial_s(g)).\]
	
	If $y's<y'$ the term $\partial_{y'}(\partial_s(g))$ vanishes, while if $y's>y'$ and $y'\Rra{u}x'$, then also $y's\Rra{sus}x$. In particular we have $s\neq u$, so $s(\alpha_u)$ is a positive root and $\partial_u^T(s(f))=\partial_{sus}^T(f)$. The claim now follows.
\end{proof}

Consider the right $R^I$-module $\bar{BS(\vec{I},\vec{J})}:=\bbR\otimes_R BS(\vec{I},\vec{J})$.
Its degree  $\ell(\vec{I},\vec{J})$ component is  of dimension one. Obvioulsy, in the bimodule $BS(\vec{I},\vec{J})$ the dimension of the degree $\ell(\vec{I},\vec{J})$ component is much bigger. However, among these, the element $\ctop(\vec{I},\vec{J})$ is basically unique up to lower degree terms. We make this precise with the following definition.

\begin{definition}
	We define $LT(BS(\vec{I},\vec{J}))$ to be the left $R$-submodule of $BS(\vec{I},\vec{J})$ generated by elements of degree less than $\ell(\vec{I},\vec{J})$. We refer to the elements of $LT(BS(\vec{I},\vec{J}))$ as \emph{lower degree terms}. 
\end{definition}
With this definition, every two elements of $BS(\vec{I},\vec{J})$ of degree $\ell(\vec{I},\vec{J})$ are multiple of each other up to lower degree terms.
Let $(\frh^*)^I$ be the subspace of $W_I$-invariant elements in $\frh^*$ (or, equivalently, $(\frh^*)^I$ is the subspace of elements of degree $2$ in $R^I$).
We say that $\rho\in (\frh^*)^I$ is \emph{ample} if $\partial_s(\rho)>0$ for every $s\in S\setminus I$. Since the simple coroots $\alpha_s^\vee$ are linearly independent there always exists an ample element $\rho\in (\frh^*)^I$.

\begin{lemma}\label{rhoample}
	Let $\rho\in (\frh^*)^I$ be ample. Let $w\in W^I$ and $w = s_1s_2\ldots s_l$ be a reduced expression for $w$. Then for any
	$i\leq l$ we have 
	\[\partial_{s_i}(s_{i+1}s_{i+2}\ldots s_l(\rho)) > 0\]
\end{lemma}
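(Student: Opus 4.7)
The plan is to rewrite $\partial_{s_i}(s_{i+1}\ldots s_l(\rho))$ as the value of a positive coroot on $\rho$, and then to use the $W_I$-invariance of $\rho$ together with ampleness to force strict positivity. Set $v := s_{i+1}\ldots s_l$. Since $v\rho$ has degree $2$ in $R$, the Demazure operator $\partial_{s_i}$ reduces to the coroot pairing:
\[\partial_{s_i}(v\rho) \;=\; \alpha_{s_i}^\vee(v\rho) \;=\; (v^{-1}\alpha_{s_i}^\vee)(\rho),\]
where the last equality uses that the action of $W$ on $\frh$ is contragredient to the one on $\frh^*$.

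Next I would verify that $v^{-1}(\alpha_{s_i}^\vee)$ is a positive coroot. As $s_1\ldots s_l$ is reduced, the subword $s_i s_{i+1}\ldots s_l$ is also reduced, so $\ell(s_iv)=\ell(v)+1$; by the standard criterion ($\ell(sv)>\ell(v)\iff v^{-1}(\alpha_s)>0$) this forces $v^{-1}(\alpha_{s_i})$ to be a positive root, and hence $v^{-1}(\alpha_{s_i}^\vee)=\sum_{s\in S} c_s\,\alpha_s^\vee$ with all $c_s\geq 0$. Now $\rho\in (\frh^*)^I$ means $s\rho=\rho$ for $s\in I$, which gives $\alpha_s^\vee(\rho)=\partial_s(\rho)=0$ for $s\in I$, while ampleness provides $\partial_s(\rho)>0$ for $s\in S\setminus I$. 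Therefore
\[\partial_{s_i}(v\rho) \;=\; \sum_{s\in S\setminus I} c_s\,\partial_s(\rho) \;\geq\; 0,\]
and equality holds if and only if $c_s=0$ for every $s\notin I$.

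It remains to exclude this equality case, and this is where I expect the main subtlety. If every $c_s$ with $s\notin I$ vanished, then $v^{-1}(\alpha_{s_i})$ would be a positive root lying in the $\bbN$-span of $\{\alpha_s:s\in I\}$, hence a root of the parabolic subgroup $W_I$, so the reflection $t:=v^{-1}s_iv$ would lie in $W_I$. The identity $s_iv=vt$ then gives
\[wt \;=\; s_1\ldots s_{i-1}\,s_iv\,t \;=\; s_1\ldots s_{i-1}\,v \;=\; s_1\ldots s_{i-1}s_{i+1}\ldots s_l,\]
which exhibits $wt$ as a word of length $l-1$, so $\ell(wt)\leq l-1<\ell(w)$. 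But $w\in W^I$ and $t\in W_I\setminus\{e\}$, so by the additivity $\ell(wu)=\ell(w)+\ell(u)$ for $u\in W_I$ we have $\ell(wt)=\ell(w)+\ell(t)\geq l+1$, a contradiction. Hence $\partial_{s_i}(v\rho)>0$, as required. The argument uses two standard facts about the reflection representation which hold for Soergel's reflection-faithful $\frh^*$: positive roots are $\mathbb{N}$-combinations of simple roots, and a positive root of $W$ lying in the $\mathbb{N}$-span of $\{\alpha_s:s\in I\}$ is a root of $W_I$; identifying these points cleanly is the only delicate part of the proof.
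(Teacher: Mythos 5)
Your argument is correct and, since the paper simply cites [PatPhD, Lemma 4.2.1] with no inline proof, I can only evaluate it on its own merits rather than compare it line-by-line with the author's. The structure is sound: rewrite $\partial_{s_i}(v\rho)$ as $(v^{-1}\alpha_{s_i}^\vee)(\rho)$ using the degree-$2$ formula and the contragredient action; use that $s_i\cdots s_l$ is reduced to get positivity of the coefficients $c_s$ via Tits' criterion; use $W_I$-invariance plus ampleness to get non-negativity of each summand; and then rule out the degenerate case by noting that $t=v^{-1}s_iv\in W_I$ would force $\ell(wt)\geq l+1$, contradicting the explicit word $s_1\cdots\widehat{s_i}\cdots s_l$ of length $\leq l-1$. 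The last step correctly exploits the length-additivity $\ell(wu)=\ell(w)+\ell(u)$ for $w\in W^I$, $u\in W_I$.

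One small imprecision worth cleaning up: in excluding the equality case you pass from ``$v^{-1}(\alpha_{s_i}^\vee)$ is supported on $I$-coroots'' to ``$v^{-1}(\alpha_{s_i})$ is supported on $I$-roots.'' For a general Coxeter group in Soergel's (not necessarily symmetric) realization it is not immediate that a root and its coroot have the same support. The cleaner route stays entirely on the $\frh$-side: since the contragredient action is $s(y)=y-\alpha_s(y)\alpha_s^\vee$, the same Coxeter-theoretic facts (Tits' lemma, and the characterization of reflections in a standard parabolic by the support of their ``root'') apply verbatim to $\frh$ with the $\alpha_s^\vee$ playing the role of simple roots. Thus $v^{-1}(\alpha_{s_i}^\vee)$ supported on $\{\alpha_s^\vee:s\in I\}$ already forces $t\in W_I$ directly, and the detour through $v^{-1}(\alpha_{s_i})$ is unnecessary. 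With that adjustment the proof is complete and self-contained.
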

\begin{proof}
	See \cite[Lemma 4.2.1]{PatPhD}.
\end{proof}

We say that a word $\undw$ is $I$-reduced if $\undw$ is a reduced word for $w\in W^I$. 
If $B\in \sbim^J$ and $J\subset I$ we denote by $B_I=B\otimes_{R^I}R^I\in \sbim^I$ its restriction to a $(R,R^I)$-bimodule.

\begin{lemma}\label{formulaperN}
	Let $\undw$ be a word of length $k$, $\rho \in (\frh^*)^I$ and consider $1^\otimes:=1\otimes 1\otimes \ldots \otimes 1 \in BS(\undw)_I$. Then $1^\otimes \cdot\rho^{\ell(\undw)}=N\ctop(\undw)$ up to lower degree terms. If $\undw$ is not $I$-reduced then $N=0$ while if $\undw$ is a $I$-reduced word for $w$ then $N=\partial_w(\rho^k)$.
	
	If, moreover, $\rho$ is ample then $N>0$. 
\end{lemma}
\begin{proof}
	If $\undw$ is not $I$-reduced then 
	\begin{equation}\label{BSnotrex}BS(\undw)_I=\bigoplus_z B_z^I(m)\end{equation} and all the elements $z$ occurring in the RHS in \eqref{BSnotrex} satisfy $\ell(z)<k$. Then, by degree reasons $1^\otimes \cdot \rho^k$ has to be an element of $LT(BS(\undw)_I)$, hence $N=0$. 
	
	Assume now that $\undw$ is a $I$-reduced word for $w\in W^I$ with $\undw=s_1s_2\ldots s_k$. For any $i$, let $\undw_i=s_1s_2\ldots s_{i-1}s_{i+1}\ldots s_k$. Let $v_i=s_is_{i+1}\ldots s_k$. As in the proof of \cite[Lemma 3.10]{EW1} one can see that 
	\[ 1^\otimes \cdot \rho^k=N\ctop(\undw)\text{+ lower degree terms}\]
	with $N= \sum_{i}\partial_{s_i}(v_{i+1}(\rho))N_i$, where $N_i$ is the coefficient of $\ctop(\undw_i)$ in $1^\otimes\cdot \rho^{k-1}\in BS(\undw_i)$. 
	By induction we have 
	\[ N = \sum_{i\;:\; \undw_i\; I\text{-reduced}}\partial_{s_i}(v_{i+1}(\rho))\partial_{w_i}(\rho^{k-1})=\sum_{y\Rra{t}w}\partial^T_t(\rho)\partial_{y}(\rho^{k-1})\]
	Finally, applying \Cref{demaformula} we get $N=\partial_w(\rho^k)$. 
	
	Assume now that $\rho$ is ample. Then, by induction, for any $i$ such that $\undw_i$ is $I$-reduced we have $N_i>0$ and also $\partial_{s_i}(v_{i+1}(\rho))>0$
	by \Cref{rhoample}. Hence $N>0$. 
\end{proof}

Let $I\subset J$ be finitary subsets and fix a reduced expression $\underline{w_Jw_I}$ for $w_Jw_I$. Then
$BS((\emptyset,I),(J))=R\otimes_{R^{J}}R^{I}(\ell(J)-\ell(I))$ is a direct summand of $BS(\underline{w_Jw_I})_I$ and we have a natural embedding of $(R,R^I)$-bimodules 
\[\vartheta: R\otimes_{R^{J}}R^{I}(\ell(J)-\ell(I))\raw BS(\underline{w_Jw_I})_I\]
induced by sending $1^\otimes\in R\otimes_{R^{J}}R^{I}$ to $1^\otimes\in BS(\underline{w_Jw_I})_I$.
Recall the element $\ctop((\emptyset,I),(J))\in R\otimes_{R^{J}}R^{I}(\ell(J)-\ell(I))$. By definition, we have $\ctop((\emptyset,I),(J))=\sum_i x_i\otimes y_i$ where $\{x_i\}$ and $\{y_i\}$ are bases of $R^I$ over $R^J$ with $\partial_{w_Iw_J}(x_iy_j)=\delta_{ij}$.

\begin{lemma}\label{thetaembe}
	The embedding $\vartheta:R\otimes_{R^{J}}R^{I}(\ell(J)-\ell(I))\raw BS(\underline{w_Jw_I})_I$ sends $\ctop((\emptyset,I),(J))$ to $\ctop(\underline{w_Jw_I})$, up to lower degree terms.
\end{lemma}
\begin{proof}
	Since $\vartheta$ sends $1^\otimes$ to $1^\otimes$, it also sends $1^\otimes \cdot \rho^{\ell(w_Jw_I)}$ to $1^\otimes\cdot \rho^{\ell(w_Jw_I)}$ for every $\rho\in (\frh^*)^I$. Fix $\rho$ ample so that $\partial_{w_Jw_I}(\rho^{\ell(w_Jw_I)})\neq 0$. Then, by \Cref{formulaperN}, it suffices to show that 
	\[1\otimes \rho^{\ell(w_Jw_I)}=\partial_{w_Jw_I}(\rho^{\ell(w_Jw_I)})\ctop((\emptyset,I),(J)) + \text{lower degree terms}\]
	
	If $\ctop((\emptyset,I),(J))=\sum_i x_i\otimes y_i$, there exists a unique index $i_0$ such that $x_{i_0}\in R^I$ is of degree $0$ and $y_{i_0}\in R^I$ is of degree $2\ell(w_Jw_I)$. Then $\ctop((\emptyset,I),(J))$ is equal to $x_{i_0}\otimes y_{i_0}=1\otimes x_{i_0}y_{i_0}$ up to lower degree terms. On the other hand, when expressing $\rho^{\ell(w_Jw_I)}$ in the $\{y_i\}$ basis, the coefficient of $y_{i_0}$ is \[\frac{\partial_{w_Jw_I}(\rho^{\ell(w_Jw_I)})}{\partial_{w_Jw_I}(y_{i_0})}=x_{i_0}\partial_{w_Jw_I}(\rho^{\ell(w_Jw_I)}),\]
	so we have $1\otimes \rho^{\ell(w_Jw_I)} = \partial_{w_Jw_I}(\rho^{\ell(w_Jw_I)})\cdot 1\otimes x_{i_0}y_{i_0}$
	up to lower degree terms and the claim follows.	
\end{proof}

Reiterating \Cref{thetaembe}, we get a chain of embeddings
\begin{multline}\label{embintoBS}BS(\vec{I},\vec{J})=R \otimes_{R^{J_1}} R^{I_1}\otimes_{R^{J_2}} \ldots \otimes_{R_{J_k}}R^{I_k}(\ell(\vec{I},\vec{J}))\hookrightarrow\\
	\hookrightarrow BS(\underline{w_Jw_I})\otimes_{R^{J_2}}R^{I_2} \otimes_{R_{J_k}}R^{I_k}(\ell(\vec{I},\vec{J})-\ell(w_Jw_I))\hookrightarrow \ldots \hookrightarrow\\
	\hookrightarrow BS(\underline{w_{J_1}w_{I_1}}\;\underline{w_{J_2}w_{I_2}}\ldots \underline{w_{J_k}w_{I_k}})_I.
\end{multline}

\begin{lemma}\label{deltatoctop}
	The embedding in \eqref{embintoBS} sends $\ctop(\vec{I},\vec{J})$ to $\ctop\in BS(\underline{w_{J_1}w_{I_1}}\ldots \underline{w_{J_k}w_{I_k}})_I$ up to lower degree terms.
\end{lemma}
\begin{proof}
	Consider the following diagram, where $\vec{I'}$ and $\vec{J'}$ are defined as in \eqref{IJprime} and $\alpha$ and $\beta$ are embeddings as in \eqref{embintoBS}.
	\begin{center}
		\begin{tikzpicture}
			\node (a) at (4,3.5) {$BS(\vec{I},\vec{J})$};
			\node (b) at (0,2) {$BS(\vec{I'},\vec{J'})$};
			\node (c) at (9,2)
			{$BS(\underline{w_{J_1}w_{I_1}})\otimes_{R} BS(\vec{I'},\vec{J'})$};
			\node (e) at (0,0) {$BS(\underline{w_{J_2}w_{I_2}}\ldots \underline{w_{J_k}w_{I_k}})_I$};
			\node (f) at (9,0) {$BS(\underline{w_{J_1}w_{I_1}}\;\underline{w_{J_2}w_{I_2}}\ldots \underline{w_{J_k}w_{I_k}})_I$};
			\path[->] 
			(e) edge node[above] {$\ctop(\underline{w_{J_1}w_{I_1}})\otimes \Iden$} (f)
			(b) edge node[above, xshift=-5]
			{$f_{\carcd}\otimes \Iden$} (a)
			edge node[above] {$\ctop(\underline{w_{J_1}w_{I_1}})\otimes\Iden$} (c);
			\path[right hook->]
			(a) edge node[above] {$\alpha$} (c)
			(c) edge node[right] {$\Iden\otimes \beta$} (f) 
			(b) edge node[right] {$\beta$} (e);
		\end{tikzpicture}
	\end{center}
	Notice that the lower square is commutative. By \Cref{thetaembe} the top triangle is ``commutative up to lower degree terms,'' meaning that we have \[\alpha\circ (f_{\carcd}\otimes \Iden) \ctop(\vec{I'},\vec{J'})=\ctop(\underline{w_Jw_I})\otimes \ctop(\vec{I'},\vec{J'})\] up to lower degree terms. 
	Therefore we have
	\begin{multline*}(\ctop(\underline{w_Jw_I})\otimes\Iden)\circ \beta(\ctop(\vec{I'},\vec{J'}))=(\Iden\otimes \beta) \circ \alpha \circ (f_{\carcd}\otimes \Iden)(\ctop(\vec{I'},\vec{J'}))=\\=(\Iden\otimes \beta) \circ \alpha (\ctop(\vec{I},\vec{J}))\end{multline*}
	up to lower degree terms. We conclude since by induction we have $\beta(\ctop(\vec{I'},\vec{J'}))=\ctop(\underline{w_{J_2}w_{I_2}}\ldots \underline{w_{J_k}w_{I_k}})$ up to lower degree terms.
\end{proof}

\begin{lemma}
	The element $\ctop(\vec{I},\vec{J})\in BS(\vec{I},\vec{J})$ together with $LT(BS(\vec{I},\vec{J}))$ generates $BS(\vec{I},\vec{J})$ as a left $R$-module.
\end{lemma}
\begin{proof}
	By looking at $\grrk (BS(\vec{I},\vec{J}))$ it is enough to show that $\ctop(\vec{I},\vec{J})$ does not belong to $LT(BS(\vec{I},\vec{J}))$. This is clear by looking at the embedding in \eqref{embintoBS}, since the image of $\ctop(\vec{I},\vec{J})$ contains $\ctop(\underline{w_{J_1}w_{I_1}}\;\underline{w_{J_2}w_{I_2}}\ldots \underline{w_{J_k}w_{I_k}})$ with non-trivial coefficient, hence it is not contained in the submodule $LT(BS(\underline{w_{J_1}w_{I_1}}\;\underline{w_{J_2}w_{I_2}}\ldots \underline{w_{J_k}w_{I_k}})_I)$.
\end{proof}

In other words, we have the following decomposition of left $R$-bimodules.
\begin{equation}
	\label{deltadec} BS(\vec{I},\vec{J}) = R\cdot \ctop(\vec{I},\vec{J}) \oplus LT(BS(\vec{I},\vec{J})).
\end{equation}
We can finally define the intersection form of a generalized Bott--Samelson bimodule.

\begin{definition}
	Let $\Trace:BS(\vec{I},\vec{J})\raw R$ be the $R$-linear map which returns the coefficient of $\ctop(\vec{I},\vec{J})$ with respect to the decomposition \eqref{deltadec}.
	We denote by $(-)\cdot(-)$ the term-wise multiplication on $BS(\vec{I},\vec{J})$. We call \emph{intersection form} the invariant form $\langle-,-\rangle_{(\vec{I},\vec{J})}$ on $BS(\vec{I},\vec{J})$ defined by 
	\[\langle x,y\rangle_{(\vec{I},\vec{J})} =\Trace(x\cdot y).\]
\end{definition}

\subsection{Adjoint and flipped maps}

Our next goal is to show that taking the adjoint with respect of the intersection forms is the same as flipping the corresponding diagrams. It is enough to show this for the ``building boxes'' of \Cref{boxes}.

We consider first the clockwise cup $f_{\carcd}$ and its flip, the clockwise cap $f_{\carcu}$.
Let $\vec{I}$ and $\vec{J}$ be as in the previous section and let \[\vec{I'}=(\emptyset,I_1,\ldots,I_{h-1},I_h,I_h,I_{h+1},\ldots, I_k),\] \[\vec{J'}=(J_1,\ldots,J_{h-1},K,J_h,J_{h+1},\ldots, J_k)\] with $I_h\subset K$. Let
\[F:=\Iden \otimes f_{\carcd}\otimes \Iden:BS(\vec{I},\vec{J})\raw BS(\vec{I'},\vec{J'})\]
be the morphism induced by $f_{\carcd}: R^{I_h}\raw R^{I_h}\otimes_{R^K}R^{I_h}(\ell(K)-\ell(I_h))$ and let 
\[\bar{F}:=\Iden\otimes f_\carcu\otimes\Iden:BS(\vec{I'},\vec{J'})\raw BS(\vec{I},\vec{J})\] be the morphism induced by $f_{\carcu}: R^{I_h}\otimes_{R^K}R^{I_h}(\ell(K)-\ell(I_h))\raw R^{I_h}$.

\begin{lemma}
	The maps $F$ and $\bar{F}$ above are adjoint to each other with respect to the corresponding intersection forms, i.e. we have 
	\[\langle F(x),y\rangle_{(\vec{I'},\vec{J'})}=\langle x, \bar{F}(y)\rangle_{(\vec{I},\vec{J})}\;\text{ for all } x\in BS(\vec{I},\vec{J}),\; y \in BS(\vec{I'},\vec{J'}).\]
\end{lemma}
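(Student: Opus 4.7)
The plan is to reduce the adjointness claim to the classical Frobenius adjunction for the single extension $R^K\subset R^{I_h}$, exploiting the $R$-bilinearity of the forms and the fact that $F$ and $\bar{F}$ act as the identity outside position $h$.

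First, both $(x,y)\mapsto \langle F(x),y\rangle_{(\vec{I'},\vec{J'})}$ and $(x,y)\mapsto \langle x,\bar{F}(y)\rangle_{(\vec{I},\vec{J})}$ are left $R$-linear and right $R^{I_k}$-linear, so it suffices to verify the identity on pure tensors. Writing $x=x_L\otimes a\otimes x_R$ with $a\in R^{I_h}$ at position $h$, and $y=y_L\otimes b\otimes c\otimes y_R$ split around the two $R^{I_h}$ positions flanking the inserted $R^K$ factor, a direct computation shows
\[F(x)\cdot y = x_Ly_L\otimes \bigl((a\otimes 1)\cdot \Delta_K^{I_h}\cdot (b\otimes c)\bigr)\otimes x_Ry_R,\]
\[x\cdot \bar{F}(y) = x_Ly_L\otimes abc\otimes x_Ry_R.\]
Everything outside position $h$ is literally the same in both expressions, so the problem reduces to a local statement about the position-$h$ factor combined with the respective traces.

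Second, fix reduced expressions $\underline{w_{J_i}w_{I_i}}$ for each $i$ together with $\underline{w_Kw_{I_h}}$ for the inserted factor, and apply the chain of embeddings \eqref{embintoBS} to view both $BS(\vec{I},\vec{J})$ and $BS(\vec{I'},\vec{J'})$ as submodules of ordinary Bott-Samelson bimodules. Under these embeddings $F$ and $\bar{F}$ correspond respectively to a ladder of single-strand cups $f_{\carcd}$ and caps $f_{\carcu}$ of Table \ref{boxes} inserted at the corresponding location. In the ordinary Bott-Samelson setting the adjointness of such compositions with respect to the standard intersection form is an iterated consequence of the Frobenius axiom for the elementary extensions $R^s\subset R$, proven in \cite[\S 3]{EW1}. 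Since by Lemma \ref{deltatoctop} the element $\Delta(\vec{I},\vec{J})$ embeds to $c_{top}$ modulo lower terms (and similarly for $\Delta(\vec{I'},\vec{J'})$), and since any contribution lying in $LT$ is killed by the trace on degree grounds, the generalized intersection forms are precisely the restrictions of the ordinary ones. The desired identity therefore descends from the ordinary to the generalized setting.

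The main obstacle I anticipate is the naturality check that the embeddings of Lemma \ref{deltatoctop} actually intertwine $F$ (resp.\ $\bar{F}$) with the corresponding ladder of cups (resp.\ caps) on the ordinary Bott-Samelson side, up to lower terms. This is in the same spirit as the commutative-diagram argument used to prove Lemma \ref{deltatoctop}, but requires care because the cup is inserted at an arbitrary position $h$ rather than at the left end; once this naturality is set up, the adjointness of $F$ and $\bar{F}$ follows formally from the iterated Frobenius axiom on the ordinary side.
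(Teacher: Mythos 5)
Your opening local computation is the right starting point, and in fact it already contains the paper's key identity: combining
$F(x)\cdot y = x_Ly_L\otimes\bigl((a\otimes 1)\Delta_K^{I_h}(b\otimes c)\bigr)\otimes x_Ry_R$
with
$x\cdot\bar F(y) = x_Ly_L\otimes abc\otimes x_Ry_R$
and the centrality of $\Delta_K^{I_h}$ in $R^{I_h}\otimes_{R^K}R^{I_h}$ (which gives $(a\otimes 1)\Delta_K^{I_h}(b\otimes c)=abc\,\Delta_K^{I_h}=f_{\carcd}(abc)$), you get the single equality $F(x)\cdot y=F\bigl(x\cdot\bar F(y)\bigr)$. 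The paper proceeds exactly from here: this identity yields $\langle F(x),y\rangle_{(\vec I',\vec J')}=\Trace\bigl(F(x\cdot\bar F(y))\bigr)$, so the whole statement reduces to showing $\Trace\circ F=\Trace$, and by the decomposition \eqref{deltadec} together with a degree count this reduces further to the single element identity $F(\Delta(\vec I,\vec J))=\Delta(\vec I',\vec J')$, which is checked by a small commutative diagram entirely inside the generalized Bott--Samelson setup.

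Your second and third paragraphs instead branch off to an embedding into ordinary Bott--Samelson bimodules, which is a genuinely different and heavier route. It would need, as you yourself flag, a naturality check that the chain \eqref{embintoBS} intertwines $F$ with the appropriate ladder of cups (and $\bar F$ with caps) up to lower terms; that check is of essentially the same size as the commutative diagram the paper uses to get $F(\Delta)=\Delta'$, except you are now doing it in a larger ambient object, so the detour buys nothing. Two smaller points are also left implicit: first, you tacitly use that the embeddings are compatible with the termwise product $\cdot$, so that the generalized intersection form is literally a restriction of the ordinary one -- this is true because each $\vartheta$ is a bimodule map sending $1^\otimes\mapsto 1^\otimes$ and therefore factorwise multiplicative, but it must be said; second, the adjointness of arbitrary cup/cap ladders with respect to the intersection form on ordinary Bott--Samelsons is not quite stated as such in \cite[\S 3]{EW1}, whose focus is the construction of the form, so invoking an ``iterated Frobenius axiom'' there needs more justification than a citation. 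The cleanest repair is to stop after your first paragraph: record the identity $F(x)\cdot y=F(x\cdot\bar F(y))$ and then prove $\Trace\circ F=\Trace$ directly, staying within the generalized Bott--Samelson category.
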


\begin{proof}
	
	In $R^{I_h}\otimes_{R^{K}}R^{I_h}$ we have 
	\[ f_{\carcd}(z)\cdot (y_1\otimes y_2)=f_{\carcd}(1) zy_1y_2=f_{\carcd}(z\cdot f_{\carcu}(y_1\otimes y_2))\;\text{ for all }z, y_1,y_2 \in R^{I_h}\]
	where the first equality follows from the fact that $g f_{\carcd}(1)=f_{\carcd}(g)= f_{\carcd}(1)g$ for any $g\in R^{I_h}$.
	It follows that for every $x\in BS(\vec{I},\vec{J})$ and $y\in BS(\vec{I'},vec{J'})$ 
	we have $F(x)\cdot y=F(x\cdot \bar{F}(y))$ and $\langle x,y\rangle_{(\vec{I},\vec{J})}=\Trace (F(x\cdot \bar{F}(y))$.
	Thus, to conclude that $F$ and $\bar{F}$ are adjoint to each other it is enough to show that, for all $b\in BS(\vec{I},\vec{J})$ we have $\Trace(F(b))=\Trace(b)$.
	
	Assume that $b=g \ctop(\vec{I},\vec{J}) +b' $ with $g\in R$ and $b'\in LT(BS(\vec{I},\vec{J}))$. By degree reasons, also $F(b')\in LT(BS(\vec{I'},\vec{J'}))$. We conclude that $\Trace(F(b))=\Trace(b)$ by showing $F(\ctop(\vec{I},\vec{J}))=\ctop(\vec{I'},\vec{J'})$. This follows since the following diagram is commutative: 
	\begin{center}
		\begin{tikzpicture}
			\node (a) at (0,2) {$R\otimes_{R^{J_h}}R^{I_h}$};
			\node (b) at (6,2) {$R\otimes_{R^{J_h}}R^{I_h}\otimes_{R^K}R^{I_h}$};
			\node (c) at (0,3) {$R\otimes_{R^{I_h}}R^{I_h}\otimes_{R^{J_h}}R^{I_h}$};
			\node (d) at (6,3) {$R\otimes_{R^{I_h}}R^{I_h}\otimes_{R^{J_h}}R^{I_h}\otimes_{R^K}R^I$};
			\node (e) at (0,5) {$R\otimes_{R^{I_h}}R^{I_h}$};
			\node (f) at (6,5) {$R\otimes_{R^{I_h}}R^{I_h}\otimes_{R^K}R^{I_h}$};
			\path[->] 
			(e) edge node[above] {$\Iden \otimes f_{\carcd}$} (f)
			edge node[left] {$ f_{\carcd}$} (c)
			(f) edge node[right] {$\Iden \otimes f_{\carcd}\otimes \Iden$} (d)
			(a) edge node[above] {$\Iden \otimes f_{\carcd}$} (b);
			\node at (0,2.5) {$\vertcong$};
			\node at (6,2.5) {$\vertcong$};
			
		\end{tikzpicture}
	\end{center}
	as it can be easily checked, for example, by looking at the corresponding diagrams.
\end{proof}

We consider next the counterclockwise cup $f_{\carad}$ and its flip, the counterclockwise cap $f_{\carau}$. 
Let now \[\vec{I''}=(\emptyset,I_1,\ldots,I_{h-1},K,I_{h+1},\ldots, I_k)\qquad\text{ and }\vec{J''}=(J_1,\ldots,J_{h-1},J_h,J_{h+1},\ldots, J_k)\] with $K\subset I_h$. Let \[G=\Iden\otimes f _{\carad}\otimes \Iden:BS(\vec{I},\vec{J})\raw BS(\vec{I''},\vec{J''})\] be the morphism induced by $f_{\carad}:R^{I_h}\raw {}_{I_h}(R^K)_{I_h}(\ell(I_k)-\ell(K))$ and let \[
\bar{G}=\Iden \otimes f_{\carau}\otimes\Iden:BS(\vec{I''},\vec{J''})\raw BS(\vec{I},\vec{J})\] be the morphism induced by $f_{\carau}: {}_{R^{I_h}}(R^K)_{R^{I_h}}(\ell(I_k)-\ell(K))\raw R^{I_h}$.

\begin{lemma}
	The maps $G$ and $\bar{G}$ above are adjoint with respect to the corresponding intersection forms.
\end{lemma}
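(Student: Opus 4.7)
The plan is to mirror the proof for the clockwise cup/cap. Throughout, write $\partial:=\partial_{w_Kw_{I_h}}:R^K\to R^{I_h}$ for the Frobenius trace of the extension $R^{I_h}\subset R^K$.

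First I would establish the Frobenius-type reciprocity
\[\bar{G}(y\cdot G(x))=\bar{G}(y)\cdot x\qquad\text{for all } x\in BS(\vec{I},\vec{J}),\ y\in BS(\vec{I''},\vec{J''}).\]
Computing termwise, at position $h$ this amounts to the identity $\partial(ba)=\partial(b)\,a$ for $a\in R^{I_h}$, $b\in R^K$, which is just the $R^{I_h}$-linearity of $\partial$, while all other tensor positions are acted on trivially. Since the termwise product on Bott-Samelson bimodules is commutative, this lets me rewrite
\[\langle x,\bar{G}(y)\rangle_{(\vec{I},\vec{J})}=\Trace_{(\vec{I},\vec{J})}\bigl(\bar{G}(G(x)\cdot y)\bigr),\]
so the adjointness reduces to the single trace identity $\Trace_{(\vec{I},\vec{J})}\circ\bar{G}=\Trace_{(\vec{I''},\vec{J''})}$ on $BS(\vec{I''},\vec{J''})$.

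To prove this trace identity, I would decompose $z=g\,\Delta(\vec{I''},\vec{J''})+z'$ with $g\in R$ and $z'\in LT(BS(\vec{I''},\vec{J''}))$ using \eqref{deltadec}. The map $\bar{G}$ is homogeneous of degree $\ell(K)-\ell(I_h)$, and one checks $\ell(\vec{I''},\vec{J''})-\ell(\vec{I},\vec{J})=\ell(I_h)-\ell(K)$, so $\bar{G}(z')$ automatically lies in $LT(BS(\vec{I},\vec{J}))$ by degree considerations. The entire problem therefore collapses to the single equality
\[\bar{G}(\Delta(\vec{I''},\vec{J''}))=\Delta(\vec{I},\vec{J}).\]

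This last identity is the main obstacle. I would establish it by verifying the local case at position $h$ and then propagating to arbitrary $k$ via induction through a commutative diagram analogous to the one used in the proof for $F$. The local case is the Frobenius-tower identity $(\mathrm{Id}\otimes\partial)(\Delta_{J_h}^K)=\Delta_{J_h}^{I_h}$: choosing dual bases $\{x_i\},\{y_i\}$ of $R^{I_h}/R^{J_h}$ and $\{a_j\},\{b_j\}$ of $R^K/R^{I_h}$, the products $\{x_ia_j\},\{y_ib_j\}$ form a dual basis of $R^K/R^{J_h}$, so that $\Delta_{J_h}^K=\sum_{i,j}x_ia_j\otimes y_ib_j$; applying $\mathrm{Id}\otimes\partial$ and using the Frobenius expansion $\sum_j a_j\,\partial(b_j)=1$ of $1\in R^K$ in this dual basis then collapses the sum to $\sum_i x_i\otimes y_i=\Delta_{J_h}^{I_h}$. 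Combining the three ingredients yields the desired adjointness.
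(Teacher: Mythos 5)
Your proof is correct and follows essentially the same route as the paper's: first derive the Frobenius reciprocity $x\cdot\bar G(y)=\bar G(G(x)\cdot y)$ (equivalently, your $\bar G(y\cdot G(x))=\bar G(y)\cdot x$) from $R^{I_h}$-linearity of the trace, reduce the adjointness to the trace identity $\Trace_{(\vec I,\vec J)}\circ\bar G=\Trace_{(\vec{I''},\vec{J''})}$, handle lower terms by the degree count $\ell(\vec{I''},\vec{J''})-\ell(\vec I,\vec J)=\ell(I_h)-\ell(K)=-\deg\bar G$, and then establish $\bar G(\Delta(\vec{I''},\vec{J''}))=\Delta(\vec I,\vec J)$; the paper carries out this last step via a commutative diagram, which encodes exactly your local Frobenius-tower identity $(\Iden\otimes\partial)(\Delta_{J_h}^K)=\Delta_{J_h}^{I_h}$ together with the inductive propagation.

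One small point on the local computation deserves care. Writing $\Delta_{J_h}^K=\sum_{i,j}x_ia_j\otimes y_ib_j$ and applying $\Iden\otimes\partial$ gives $\sum_{i,j}x_ia_j\otimes y_i\partial(b_j)$ in $R^K\otimes_{R^{J_h}}R^{I_h}$; here the factors $y_i$ and $\partial(b_j)$ lie in $R^{I_h}$, not in $R^{J_h}$, so one cannot slide $\partial(b_j)$ across the tensor and invoke $\sum_j a_j\partial(b_j)=1$ as literally stated. The clean way to finish is to use the basis-independence of $\Delta_{J_h}^K$ and choose the dual bases $\{a_j\},\{b_j\}$ of $R^K$ over $R^{I_h}$ with $a_0=1$; then $\partial(b_j)=\partial(a_0b_j)=\delta_{j0}$, the sum over $j$ collapses to the $j=0$ term, and one obtains $\sum_i x_i\otimes y_i=\Delta_{J_h}^{I_h}$ directly. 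With this adjustment the argument is complete.
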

\begin{proof}We have
	\[ z_1\cdot f_{\carau} (z_2) =
	f_{\carau}(z_1 z_2)=f_{\carau}(f_{\carad}(z_1)z_2)\;\text{ for all }z_1\in R^{I_h}\text{ and }z_2\in R^K.\]
	and so $x\cdot \bar{G}(y)=\bar{G}(G(x)\cdot y)$ for any $x\in BS(\vec{I},\vec{J})$ and $y\in BS(\vec{I''},\vec{J''})$. Therefore, it is enough to show that for every $b\in BS(\vec{I''},\vec{J''})$ we have $Tr(b)=Tr(\bar{G}(b))$.
	
	Let $b\in BS(\vec{I''},\vec{J''})$. We can write $b=g\ctop(\vec{I''},\vec{J''})+ b'$, with $g\in R$ and $b'\in LT( BS(\vec{I''},\vec{J''}))$, so that $\Trace(b)=g$.
	
	By degree reasons, we have $\bar{G}(b')\in LT(BS(\vec{I},\vec{J}))$. To conclude we just need to show that $\bar{G}(\ctop(\vec{I''},\vec{J''}))=\ctop(\vec{I},\vec{J})$. This follows from the commutativity of the following diagram.
	
	\[
	\includegraphics[scale=0.18]{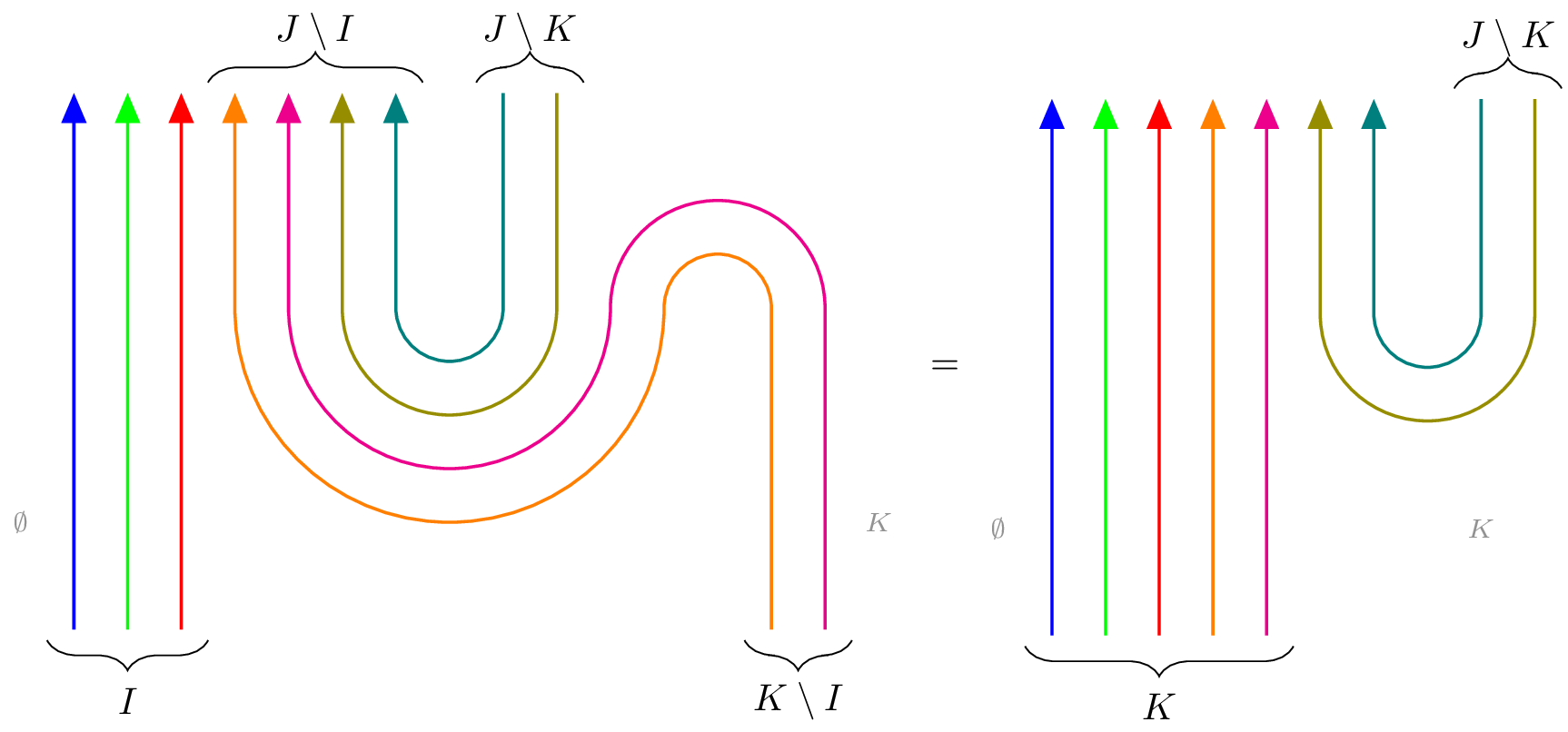}\qedhere
	\]
\end{proof}

Finally, it remains to consider the last building box: 
\begin{equation}\label{due1}
	\begin{tikzpicture}
		\draw[thick,red, -triangle 45] (0.76,0) to (-0.75,2);
		\draw[thick,blue, triangle 45-] (0.75,2) to (-0.75,0);
		\node at (0.5,1) {J};
		\node at (-0.5,1) {I};
		\node at (0,0.5) {K};
		\node at (0,1.5) {L};
		\node[red] at (0.8,0.3) {$s$};
		\node[blue] at (-0.8,0.3) {$t$};
	\end{tikzpicture}
\end{equation}

On bimodules $f_{\duarrow}$ induces the canonical identification 
\[R^I\otimes_{R^K} R^K\otimes_{R^J}R^J(\ell(J)-\ell(I))\cong R^I\otimes_{R^L} R^L\otimes_{R^J}R^J(\ell(J)-\ell(I)).\] 
By construction, $\Iden \otimes f_{\duarrow}\otimes \Iden $ identifies the corresponding $\ctop$ elements. In particular, $\Iden \otimes f_{\duarrow}\otimes \Iden $ is an isometry with respect to the corresponding intersection forms and if $f_{\ddarrow}$ denotes the inverse morphism of $f_{\duarrow}$ given by the following diagram 
\begin{equation}\label{due2}
	\begin{tikzpicture}
		\draw[thick,blue, -triangle 45] (0.76,0) to (-0.75,2);
		\draw[thick,red, triangle 45-] (0.75,2) to (-0.75,0);
		\node at (0.5,1) {J};
		\node at (-0.5,1) {I};
		\node at (0,0.5) {L};
		\node at (0,1.5) {K};
		\node[blue] at (0.8,0.3) {$t$};
		\node[red] at (-0.8,0.3) {$s$};
	\end{tikzpicture}
\end{equation}
then $\Iden \otimes f_{\ddarrow}\otimes \Iden$ is also the adjoint $\Iden \otimes f_{\duarrow}\otimes \Iden$. Notice that the diagram in \eqref{due2} is the flip of \eqref{due1}. 

We can condense the results of this section in the following proposition.
\begin{prop}
	Let $f:BS(\vec{I},\vec{J})\raw BS(\vec{I'},\vec{J'})$ be a morphism. Then the flipped morphism $\bar{f}:BS(\vec{I'},\vec{J'})\raw BS(\vec{I},\vec{J})$ is the adjoint of $f$ with respect to the corresponding intersection forms. 
\end{prop}

\subsection{Reduced translation pairs}

Reduced translating sequences \cite[Definition 1.3.1]{WPhD} are the analogue of reduced expression for double cosets of Coxeter groups. Since we are only interested in one-sided singular Soergel bimodules, we give a simpler definition which is valid in our setting.

\begin{definition}
	Let $(\vec{I},\vec{J})$ be a translation pair with $\vec{I}=(\emptyset,I_1,I_2,\ldots,I_k)$. Let 
	$v_1=w_{J_1}w_{I_1}$ and $v_{h}=v_{h-1}w_{J_h}w_{I_h}$ for every $h\leq k$.
	We call $v_k$ the \emph{end-point} of $(\vec{I},\vec{J})$.

	We say that $(\vec{I},\vec{J})$ is \emph{reduced} if for every $h<k$ we have 
	\[\ell(v_hw_{J_{h+1}}) = \ell(v_h)+\ell(w_{J_{h+1}}).\]
\end{definition}

If $(\vec{I},\vec{J})$ is a reduced translation pair with end-point $w\in W^I$, then $B_w^I$ is a direct summand of $BS(\vec{I},\vec{J})$ with multiplicity $1$. 
%The bimodules $B_w^I$ and $BS(\vec{I},\vec{J})$ have the same elements in degree $-\ell(w)$, being both spaces spanned by $1^\otimes$.

Recall the embedding of $BS(\vec{I},\vec{J})$ in $BS(\underline{w_{J_1}w_{I_1}}\;\underline{w_{J_2}w_{I_2}}\ldots \underline{w_{J_k}w_{I_k}})_I$ from \eqref{embintoBS}. 
If $(\vec{I},\vec{J})$ is a reduced translation pair, then
$\underline{w_{J_1}w_{I_1}}\;\underline{w_{J_2}w_{I_2}}\ldots \underline{w_{J_k}w_{I_k}}$ is a reduced word. Combining \Cref{deltatoctop} and \Cref{formulaperN} we have for every $\rho\in (h^*)^I$
\[1\otimes \rho^{\ell(\vec{I},\vec{J})}=\partial_{w}(\rho^{\ell(\vec{I},\vec{J})})\ctop(\vec{I},\vec{J}) + \text{lower degree terms}\]
and
\[\langle 1^\otimes,1^\otimes\cdot \rho^{\ell(\vec{I},\vec{J})}\rangle=\partial_w(\rho^{\ell(\vec{I},\vec{J})}).\]

\begin{lemma}
	Let $(\vec{I},\vec{J})$ and $(\vec{I'},\vec{J'})$ be two reduced translation pairs both having the same end-point $w\in W^I$. Let $\varphi :BS(\vec{I},\vec{J})\raw BS(\vec{I'},\vec{J'})$ be a morphism such that $\varphi(1^\otimes)= 1^\otimes$. Then also $\bar{\varphi}(1^\otimes)=1^\otimes$.
\end{lemma}

\begin{proof}
	Since $\varphi(1^\otimes)=1^\otimes$, the morphism $\varphi$ must be of degree $0$, and so does $\bar{\varphi}$. Then $\bar{\varphi}(1^\otimes)=c 1^\otimes$ for some scalar $c$. Let $\rho\in (\frh^*)^I$. Then we have
	\[\partial_w(\rho^k)=\langle \varphi(1^\otimes), 1^\otimes \rho^k\rangle_{(\vec{I'},\vec{J'})} =\langle 1^\otimes\rho^k, \bar{\varphi}(1^\otimes) \rangle_{(\vec{I},\vec{J})}=c\partial_w(\rho^k).\] 
	We can choose $\rho$ to be ample. Since $\partial_w(\rho^k)>0$, we obtain $c=1$.
\end{proof}

\begin{cor}\label{deltawelldef}
	Let $(\vec{I},\vec{J})$, $(\vec{I'},\vec{J'})$ and $\varphi :BS(\vec{I},\vec{J})\raw BS(\vec{I'},\vec{J'})$ be as above. Then $\varphi(\ctop(\vec{I},\vec{J}))=\ctop(\vec{I'},\vec{J'})$ up to lower degree terms. %In particular, if $\varphi$ is an isomorphism then $\varphi$ is an isometry with respect to the intersection forms $\langle -,-\rangle_{(\vec{I},\vec{J})}$ and $\langle -,-\rangle_{(\vec{I'},\vec{J'})}$.
\end{cor}
\begin{proof}
	We have $\ctop(\vec{I},\vec{J})=\psi(1)$ where $\psi:=(f_{\carcd}\otimes \Iden)\circ (f_{\carcd}\otimes \Iden)\circ \ldots:R_I \raw BS(\vec{I},\vec{J})$, and similarly $\ctop(\vec{I'},\vec{J'})=\psi'(1)$ with $\psi':R_I\raw BS(\vec{I'},\vec{J'})$. The morphisms $\varphi\circ \psi$ and $\psi'$ are both of degree $\ell(\vec{I},\vec{J})$, so they coincide up to scalar and up to lower degree terms. 
	
	However, after taking the flip, we have
	\[ \bar{\psi}\circ \bar{\varphi}(1^\otimes) =1 =\bar{\psi'}(1^\otimes),\]
	hence $\varphi\circ \psi=\psi'$ up to lower degree terms.
\end{proof}

\bibliographystyle{alpha}

\begin{thebibliography}{ESW17}
	
	\bibitem[Bre02]{Br}
	Francesco Brenti.
	\newblock Kazhdan-{L}usztig and {$R$}-polynomials, {Y}oung's lattice, and
	{D}yck partitions.
	\newblock {\em Pacific J. Math.}, 207(2):257--286, 2002.
	
	\bibitem[Bri98]{Brion}
	Michel Brion.
	\newblock Equivariant cohomology and equivariant intersection theory.
	\newblock In {\em Representation theories and algebraic geometry ({M}ontreal,
		{PQ}, 1997)}, volume 514 of {\em NATO Adv. Sci. Inst. Ser. C Math. Phys.
		Sci.}, pages 1--37. Kluwer Acad. Publ., Dordrecht, 1998.
	\newblock Notes by Alvaro Rittatore.
	
	\bibitem[Bri00]{Brion2}
	Michel Brion.
	\newblock Poincar\'e duality and equivariant (co)homology.
	\newblock {\em Michigan Math. J.}, 48:77--92, 2000.
	\newblock Dedicated to William Fulton on the occasion of his 60th birthday.
	
	\bibitem[EL16]{ELa}
	Ben Elias and Aaron~D. Lauda.
	\newblock Trace decategorification of the {H}ecke category.
	\newblock {\em J. Algebra}, 449:615--634, 2016.
	
	\bibitem[EL17]{EL}
	Ben Elias and Ivan Losev.
	\newblock Modular representation theory in type a via {S}oergel bimodules, 2017.
	
	\bibitem[Eli16]{E3}
	Ben Elias.
	\newblock The two-color {S}oergel calculus.
	\newblock {\em Compos. Math.}, 152(2):327--398, 2016.
	
	\bibitem[ESW17]{ESW}
	Ben Elias, Noah Snyder, and Geordie Williamson.
	\newblock On cubes of {F}robenius extensions.
	\newblock In {\em Representation theory---current trends and perspectives}, EMS
	Ser. Congr. Rep., pages 171--186. Eur. Math. Soc., Z\"urich, 2017.
	
	\bibitem[EW14]{EW1}
	Ben Elias and Geordie Williamson.
	\newblock The {H}odge theory of {S}oergel bimodules.
	\newblock {\em Ann. of Math. (2)}, 180(3):1089--1136, 2014.
	
	\bibitem[EW16]{EW2}
	Ben Elias and Geordie Williamson.
	\newblock Soergel calculus.
	\newblock {\em Represent. Theory}, 20:295--374, 2016.
	
	\bibitem[Gin91]{Gi}
	Victor Ginsburg.
	\newblock Perverse sheaves and {${\bf C}^*$}-actions.
	\newblock {\em J. Amer. Math. Soc.}, 4(3):483--490, 1991.
	
	\bibitem[Hum90]{Hum}
	James~E. Humphreys.
	\newblock {\em Reflection groups and {C}oxeter groups}, volume~29 of {\em
		Cambridge Studies in Advanced Mathematics}.
	\newblock Cambridge University Press, Cambridge, 1990.
	
	\bibitem[KT03]{KnT}
	Allen Knutson and Terence Tao.
	\newblock Puzzles and (equivariant) cohomology of {G}rassmannians.
	\newblock {\em Duke Math. J.}, 119(2):221--260, 2003.
	
	\bibitem[LS81]{LS}
	Alain Lascoux and Marcel-Paul Sch\"{u}tzenberger.
	\newblock Polyn\^{o}mes de {K}azhdan \& {L}usztig pour les grassmanniennes.
	\newblock In {\em Young tableaux and {S}chur functors in algebra and geometry
		({T}oru\'{n}, 1980)}, volume~87 of {\em Ast\'{e}risque}, pages 249--266. Soc.
	Math. France, Paris, 1981.
	
	\bibitem[Pat18]{PatPhD}
	Leonardo Patimo.
	\newblock {\em Hodge theoretic aspects of {S}oergel bimodules and
		representation theory}.
	\newblock PhD thesis, University of Bonn, 2018.
	\newblock Thesis (Ph.D.)--University of Bonn
	\url{http://hss.ulb.uni-bonn.de/2018/4955/4955.htm}.
	
	\bibitem[Pat19]{Pat4}
	Leonardo Patimo.
	\newblock Singular {R}ouquier complexes, 2019.
	\newblock \href{https://arxiv.org/abs/1908.10966}{arXiv:1908.10966}.
	
	
	\bibitem[Per07]{Per}
	Nicolas Perrin.
	\newblock Small resolutions of minuscule {S}chubert varieties.
	\newblock {\em Compos. Math.}, 143(5):1255--1312, 2007.
	
	\bibitem[Ric17]{Ric}
	Simon Riche.
	\newblock La th\'eorie de {H}odge des bimodules de {S}oergel (d'apr\`es
	{S}oergel et {E}lias-{W}illiamson).
	\newblock S\'eminaire Bourbaki n. 1139, 2017.
	
	\bibitem[Soe90]{S1}
	Wolfgang Soergel.
	\newblock Kategorie {$\scr O$}, perverse {G}arben und {M}oduln \"uber den
	{K}oinvarianten zur {W}eylgruppe.
	\newblock {\em J. Amer. Math. Soc.}, 3(2):421--445, 1990.
	
	\bibitem[Soe07]{S4}
	Wolfgang Soergel.
	\newblock Kazhdan-{L}usztig-{P}olynome und unzerlegbare {B}imoduln \"uber
	{P}olynomringen.
	\newblock {\em J. Inst. Math. Jussieu}, 6(3):501--525, 2007.
	
	\bibitem[SZJ12]{SZJ}
	Keiichi Shigechi and Paul Zinn-Justin.
	\newblock Path representation of maximal parabolic {K}azhdan-{L}usztig
	polynomials.
	\newblock {\em J. Pure Appl. Algebra}, 216(11):2533--2548, 2012.
	
	\bibitem[Wes09]{Wes}
	Bruce~W. Westbury.
	\newblock Invariant tensors and cellular categories.
	\newblock {\em J. Algebra}, 321(11):3563--3567, 2009.
	
	\bibitem[Wil08]{WPhD}
	Geordie Williamson.
	\newblock {\em Singular Soergel Bimodules}.
	\newblock PhD thesis, University of Freiburg, 2008.
	\newblock Thesis (Ph.D.)--Freiburg University
	\url{https://freidok.uni-freiburg.de/data/5093}.
	
	\bibitem[Wil11]{W4}
	Geordie Williamson.
	\newblock Singular {S}oergel bimodules.
	\newblock {\em Int. Math. Res. Not. IMRN}, (20):4555--4632, 2011.
	
	\bibitem[Zel83]{Zel}
	A.~V. Zelevinski{\u\i}.
	\newblock Small resolutions of singularities of {S}chubert varieties.
	\newblock {\em Funktsional. Anal. i Prilozhen.}, 17(2):75--77, 1983.
	
\end{thebibliography}

\Address
\end{document}